\newcommand{\N}{\mathbb{N}}
\newcommand{\Q}{\mathbb{Q}}
\newcommand{\R}{\mathbb{R}}
\newtheorem{thm}{Theorem}[section]
\newtheorem*{Thm}{Theorem}
\theoremstyle{definition}
\newtheorem{definition}[thm]{Definition}
\newtheorem*{Definition}{Definition}
\theoremstyle{plain}
\newtheorem{lem}[thm]{Lemma}
\newtheorem{prop}[thm]{Proposition}
\newtheorem{cor}[thm]{Corollary}
\newtheorem{remark}[thm]{Remark}
\newtheorem*{Remark}{Remark}
\theoremstyle{remark}
\begin{document}

\pagenumbering{roman}

\thispagestyle{empty}

\begin{center}
    \Large
    Ben Gurion University of the Negev\\
    The Faculty of Natural Sciences\\
    Department of Mathematics
\end{center}

\vfill\vfill
\begin{center}
    \large
    Master of Science Thesis\\
\end{center}

\vfill
\begin{center}
    \Huge\bfseries
		Looking for a Billiard Table which is not a Lattice Polygon but Satisfies Veech's Dichotomy
\end{center}

\vfill
\begin{center}
    \Large
    by
\end{center}

\vfill
\begin{center}
    \huge\bfseries
    Meital Cohen
\end{center}

\vfill\vfill\vfill
\begin{center}
    \Large
    Supervisor: Prof. Barak Weiss
\end{center}

\vfill
\begin{center}
\large
    Beer Sheva, 2010
\end{center}

\cleardoublepage

\pagestyle {headings}
\newpage
\section{Abstract}
Many problems in geometry, topology and dynamical systems on surfaces lead to the study of closed surfaces endowed with a flat metric containing several cone-type singularities, that are called \textit{flat structures} or \textit{translation surfaces}.\\

The first construction in the theory is that of a flat structure which is obtained from billiard on a rational polygon by \textit{unfolding} it to a collection of polygons with gluings. That is, from a polygon $P$, we develop a polygonal system $\widetilde{P}$, with sides identified by translation, to get a surface $M = \widetilde{P} \ / \backsim$. The idea is to associate a closed, orientable surface to the billiard table which has the same geodesics (the billiard trajectories). Rational billiard determines a flat structure; however most of the flat structures are not obtained from billiards.\\

Over the course of studying billiard dynamics, several questions were raised. One of the questions was, which surfaces satisfy the following property (which is called \textit{Veech's dichotomy}): Any direction is either \textit{completely periodic} or \textit{uniquely ergodic}.\\

In an important paper \cite{Veech89} Veech gave a sufficient condition for this dichotomy. He showed that if the stabilizer of a translation surface is a lattice in $SL_2(\R)$, then the surface satisfies Veech's dichotomy. Later, Smillie and Weiss \cite{SW} proved that this condition is not necessary. They constructed a translation surface which satisfies Veech's dichotomy but is not a lattice surface. Their construction was based on previous work of Hubert and Schmidt \cite{HS infty}, by taking a branched cover over a lattice surface, where the branch locus is a single \textit{non-periodic connection point}.\\

In this work we tried to answer the following question: Is there a flat structure obtained from a billiard table that satisfies Veech's dichotomy, but its Veech group is not a lattice? In Theorem \ref{main} we prove that in the entire list of possible candidates for such a construction, an example does not exist.

\section{Acknowledgements}

This is my M.Sc. thesis at Ben Gurion University. 
This document is not being submitted for publication. 
This research was supported by the Israel Science Foundation.\\

I would like to thank all the people who helped, supported and inspired me during my studies.\\

First and foremost I would like to thank my supervisor, Prof. Barak Weiss, for his incredible support, patience and endless encouragement. Barak was always available to me and willing to help. His great ability to explain clearly and simply, made challenges more accessible to me. His vast knowledge, skills and pleasant personality inspired me during the whole period we have worked together.\\

Furthermore, I would like to express my gratitude to my friends from BGU math department, Yaar, Daniel and Hai. They provided me with a pleasant environment, good advice, assistance and encouragement at difficult times. In addition, I want to thank the Administrative Coordinator Rutie Peled, who welcomed me with opened arms and a big smile. Rutie always was glad to help with every problem I had during 
my studies and work in the department.\\

Finally, my deepest gratitude goes to my dearest companion Dror for his abundant love and support throughout this loaded period. 
This work was simply impossible without him. 

\newpage

\tableofcontents
\newpage
\pagenumbering{arabic}

\section{Background}
General references for translation surfaces and billiards are \cite{MaTa}, \cite{HS intro}, \cite{Vor96}.

\subsection{Translation Surfaces}
\begin{Definition}
A \textit{translation surface} or \textit{flat structure} is a finite union of Euclidean polygons $\{P_1,P_2, ..., P_n\}$ with identifications such that:
\begin{enumerate}
\item The boundary of every polygon is oriented such that the polygon lies to the left.
\item For every $1\leq j \leq n $, for every oriented side $s_j$ of 	$P_j$ there exists $1\leq k \leq n $ and an oriented side $s_k$ of $P_k$ such that $s_j$ and $s_k$ are parallel, of the same length and of opposite orientation. They are glued together in the opposite orientation by a parallel translation.
\end{enumerate}
\end{Definition}
%If $\backsim$ denotes the equivalence relation coming from identification of sides then we define a closed oriented surface 
%$M=\bigcup P_j / \backsim$ with a finite set of points, $\Sigma=\Sigma_M$, corresponding to vertices of the polygons. \\

There is a finite set of points $V$, corresponding to the vertices of the polygons. The \textit{cone angle} at a point in $V$ is the sum of the angles at the corresponding points in the polygons $P_i$. For each point in $V$ the total angle around the point is $2 \pi k$ where $k \in \N$. We say that a point is \textit{singular} of \textit{multiplicity} $k$ if $k>1$, otherwise it is called \textit{regular}. We will denote by $\Sigma=\Sigma_M$ the set of the singular points.\\

If $\backsim$ denotes the equivalence relation coming from identification of sides then we define a closed oriented surface $M=\bigcup P_j / \backsim$ with a finite set of points $\Sigma$, \text{corresponding} to the singular points of $M$.\\

There is an equivalent definition of translation surfaces in terms of special atlases called \textit{translation atlases}: A translation surface is a compact orientable surface with an atlas such that, away from finitely many points called singularities, all transition functions are translations.

\begin{prop} \cite{Vor96}
Let $M$ be a translation surface, let $k_1, k_2, \ldots k_m$ be the multiplicities of its singular points. Then $2g-2=-\chi(M)=\sum_{i=1}^{m}(k_i-1)$ where $g$ is the genus of $M$ and $\chi(M)$ is the Euler characteristic of $M$.
\end{prop}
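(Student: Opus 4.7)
The plan is to compute the Euler characteristic directly from the polygonal presentation of $M$, viewing the decomposition $M = \bigcup P_j / \sim$ as a CW structure. Let $F = n$ be the number of $2$-cells (the polygons), let $E$ be the number of $1$-cells (edges after identification), and let $V$ be the number of $0$-cells (equivalence classes of vertices under $\sim$). Since identifications glue sides in pairs, if polygon $P_j$ has $s_j$ sides then $\sum_{j=1}^n s_j = 2E$. I would then extend the list of multiplicities to include \emph{every} equivalence class of vertices, declaring regular vertices to have $k = 1$; this costs nothing in the final sum because such terms contribute $k-1 = 0$.

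Next I would exploit the flatness of the metric by computing the total interior angle in two different ways. Summing the interior angles of a Euclidean polygon with $s_j$ sides gives $(s_j - 2)\pi$, so the total angle contributed by all polygons equals
\[
\sum_{j=1}^{n}(s_j - 2)\pi \; = \; (2E - 2F)\pi .
\]
On the other hand, grouping the angle contributions by equivalence class of vertex and using the definition of multiplicity, the same total equals $\sum_{v} 2\pi k_v$, where the sum ranges over all vertex classes. Equating the two expressions yields $E - F = \sum_v k_v$.

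Putting these together,
\[
\chi(M) \; = \; V - E + F \; = \; V - \sum_v k_v \; = \; -\sum_v (k_v - 1) \; = \; -\sum_{i=1}^m (k_i - 1),
\]
since only singular vertices contribute. Combining with $\chi(M) = 2 - 2g$ for a closed orientable surface of genus $g$ gives the desired formula $2g - 2 = \sum_{i=1}^{m}(k_i - 1)$.

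The argument is essentially bookkeeping once one believes that the polygonal presentation really is a CW decomposition; the main point that needs care, rather than being a serious obstacle, is the double-counting check in the edge-count $\sum s_j = 2E$ and the careful grouping of angles by equivalence class so that all $2\pi$'s are accounted for, including at regular points that sit inside the interior of identified edges or at identified vertices whose cone angle happens to equal $2\pi$.
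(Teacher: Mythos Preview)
Your argument is correct: the double count of total interior angle, once as $\sum_j (s_j-2)\pi = (2E-2F)\pi$ and once as $\sum_v 2\pi k_v$, cleanly gives $E-F = \sum_v k_v$, and then $\chi = V-E+F = -\sum_v(k_v-1)$ follows. Extending the list of multiplicities to all vertex classes with $k_v=1$ at regular points is the right bookkeeping device.

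As for comparison: the paper does not actually supply a proof of this proposition. It is quoted as a known fact with a citation to \cite{Vor96}, so there is nothing in the paper to compare your argument against. Your proof is the standard Gauss--Bonnet-type combinatorial derivation for flat cone surfaces, and it is exactly the sort of argument one finds in the references.
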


\subsection{Relation to Billiards}
\subsubsection {The Unfolding Process for Rational Billiards:}
We will describe the unfolding process, which gives the motivation to the following definition that describes the connection of billiards to translation surfaces.\\

Let $P$ be a rational polygon (all of whose angles are rational multiples of $\pi$). Given a billiard trajectory (that avoids the vertices) beginning at a side of $P$. Given a collision with a side we reflect the polygon along the side, obtaining a mirror image of the original polygon, on which the billiard now continues in its original direction, instead of reflecting off the side. Continuing this process, we obtain a straight line, passing copies of the polygon. Since $P$ is a rational polygon, there are only finitely many possible angles of incidence of our trajectory with these copies. Thus, the billiard eventually exits a copy of the polygon in a side that is parallel with the initial side. We identify these sides by translation and continue this process, considering any unpaired side that the billiards meets as the new initial side. The result is a new polygon with various opposite sides identified (see Figure \ref{unfolding}). On this flat surface, the billiard moves along straight line segments, up to translation.\\

\begin{figure}[h!]
\begin{center}
\includegraphics[scale=0.7]{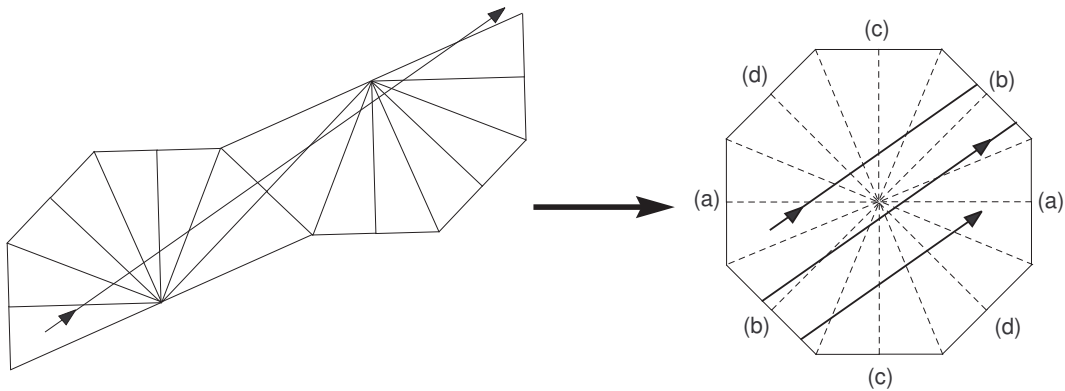}
\caption{The unfolding process of a triangle billiard table with angles $\left(\frac{\pi}{2}, \frac{\pi}{8}, \frac{3\pi}{8} \right)$
	and the flat structure obtained - the octagon with parallel sides identified.}
\label{unfolding}			
\end{center}
\end{figure}

\subsubsection {Rational billiard determines a translation surface:}
Let $A_P$ be the group of motions of the plane generated by the reflections in the sides of $P$. It follows that every copy of $P$, involved in the unfolding, is the image of $P$ under an element of the group $A_P$. The product of an even number of elements of this group preserves orientation while an odd number reverses it. To keep track of the directions of billiard trajectories in $P$ consider the group $G_P$ that consists of the linear parts of the motions from $A_P$. This subgroup of the orthogonal group is generated by reflections in the lines through the origin, which are parallel to the sides of the polygon $P$.\\

Let $P$ be a simply connected rational polygon with angles $\frac {m_i}{n_i}\pi$ where $m_i$ and $n_i$ are coprime integers, and denote $N=lcm\{n_i\}$. Then the group $G_P$ is the dihedral group $D_N$ with $2N$ elements ($N$ reflections and $N$ rotations), denote $G_P=\{g_1, g_2, \ldots g_{2N} \}$. Consider $2N$ disjoint copies of $P$ in the plane, and denote $P_k=g_k P \ ,\  k=1,2, \dots, 2N$. If $g_k$ preserves orientation, then orient $P_k$ clockwise, else orient $P_k$ counterclockwise. Now, paste their sides together pairwise: For each side $e_i^{k_1}$ of $P_{k_1}$ paste the side $e_j^{k_2}$ of $P_{k_2}$ such that $P_{k_2}$ is obtained from $P_{k_1}$ by reflection with respect to the side $e_i^{k_1}$. After these pastings are made for all the sides of all the polygons, one obtains a translation surface $M_P$. 

\begin{remark}
\label{gluing}
The point in $M$ corresponding to a vertex in $P$ is the result of gluing $2n_i$ copies of the angle $\frac{m_i}{n_i}\pi$ which sums up to an angle of $2 \pi m_i$. It follows that a vertex in $P$ defines a regular point if and only if its angle is $\frac{\pi}{n}$.
\end{remark}

\begin{prop}\label{N_even}
Let $P$ be a rational polygon. If $N$ is even then $-Id \in G_P$.
\end{prop}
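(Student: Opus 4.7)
The plan is to exploit the explicit description of $G_P$ as the dihedral group $D_N$ given just before the statement. Since $G_P$ is generated by reflections in lines through the origin parallel to the sides of $P$, and the angles of $P$ are $\frac{m_i}{n_i}\pi$, consecutive sides of $P$ meet at such angles, so the directions of the sides lie in a $\frac{\pi}{N}\mathbb{Z}$-coset of $\mathbb{R}/\pi\mathbb{Z}$, where $N=\operatorname{lcm}\{n_i\}$. Moreover, by the definition of the least common multiple, some pair of sides has directions differing by exactly $\frac{\pi}{N}$ (or one can chase such a pair through successive reflections).

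First, I would recall the elementary fact that the composition of two reflections in lines through the origin making an angle $\theta$ is the rotation by $2\theta$ about the origin. Applying this to two reflections in $G_P$ whose axes differ by $\frac{\pi}{N}$ produces the rotation $R_{2\pi/N}$ by angle $\frac{2\pi}{N}$, which therefore lies in $G_P$. Together with any one of the generating reflections, $R_{2\pi/N}$ generates a subgroup of $G_P$ isomorphic to $D_N$, which by the paragraph preceding the statement must be all of $G_P$.

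Now assume $N$ is even and write $N=2k$. Then the rotation $R_{2\pi/N}^{\,k} = R_{\pi}$ belongs to $G_P$. But rotation by $\pi$ about the origin is precisely the linear map $-\operatorname{Id}$ on $\mathbb{R}^2$, which gives $-\operatorname{Id}\in G_P$ as required.

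I do not expect a significant obstacle here: the argument is essentially a short extraction from the structure of $D_N$. The only point worth being careful about is justifying that the rotation subgroup of $G_P$ contains $R_{2\pi/N}$ (and not merely some $R_{2\pi/N'}$ with $N'\mid N$), which is exactly where the hypothesis that $N$ is the \emph{least} common multiple of the $n_i$'s is used.
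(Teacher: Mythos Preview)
Your argument is essentially the same as the paper's: both use that $G_P=D_N$, hence contains the rotation $R_{2\pi/N}$, and then raise this to the $(N/2)$-th power to obtain $-\operatorname{Id}$ when $N$ is even. The paper simply invokes the identification $G_P=D_N$ established in the preceding paragraph and the standard fact that the rotation subgroup (the kernel of the determinant) is cyclic generated by $R_{2\pi/N}$, without re-deriving it.

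One small caution: your intermediate claim that ``some pair of sides has directions differing by exactly $\pi/N$'' is not true in general (e.g.\ for the triangle with angles $\tfrac{2\pi}{7},\tfrac{2\pi}{7},\tfrac{3\pi}{7}$, where $N=7$ but adjacent side directions differ by $\tfrac{2\pi}{7}$ or $\tfrac{3\pi}{7}$). Your parenthetical escape hatch is the correct fix, but note that ``chasing through successive reflections'' amounts precisely to using the already-established fact $G_P=D_N$, so you may as well start there, as the paper does.
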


\begin{proof}
$G_P=D_N$ the dihedral group with $2N$ elements. The kernel of the determinant map on $D_N$ is an index two subgroup of rotations, which is generated by the rotation with angle $\frac{2\pi}{N}$. Hence, if $N$ is even we have $-Id \in D_N$.
\end{proof}

We will call a rational angle with even denominator an \textit{even angle}.

\begin{cor}
\label{-id}
Let $P$ be a rational polygon. If one of the angles in $P$ is even or if there exist two external sides in $P$ with an even angle between them, then $-Id \in G_P$.
\end{cor}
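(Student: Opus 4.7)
The plan is to deduce both cases of the corollary by reducing them to Proposition \ref{N_even}; that is, I will show that under each hypothesis the integer $N=\mathrm{lcm}\{n_i\}$ is even, and then invoke $N_{\mathrm{even}}\Rightarrow -Id\in G_P$.

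First I would handle the case where some interior angle $\alpha_i=\frac{m_i}{n_i}\pi$ is even. By the definition introduced just before the corollary, ``$\alpha_i$ even'' means $n_i$ is even. Since $n_i$ divides $N=\mathrm{lcm}\{n_j\}$, the integer $N$ is even, and Proposition \ref{N_even} immediately yields $-Id\in G_P$. This case is essentially a one-line consequence of the definitions.

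The more substantial case is the second one: two sides of $P$ whose directions differ by an even angle $\theta=\frac{m}{n}\pi$ with $\gcd(m,n)=1$ and $n$ even. By the description of $G_P$ given just above Proposition \ref{N_even}, the reflections in the two lines through the origin parallel to these sides both belong to $G_P$. Their composition is the rotation by $2\theta=\frac{2m}{n}\pi$, which therefore lies in $G_P$. I would then compute the order of this rotation: it is the smallest positive integer $k$ with $k\cdot\frac{m}{n}\in\Z$, which, because $\gcd(m,n)=1$, equals $n$. Since the rotation subgroup of $G_P=D_N$ is cyclic of order $N$, the existence of an element of order $n$ in this subgroup forces $n\mid N$. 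As $n$ is even, so is $N$, and a final appeal to Proposition \ref{N_even} finishes the argument.

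The principal place where care is needed is the order computation in the second case: one must observe that $\gcd(m,n)=1$ together with $n$ even forces $m$ to be odd, so that the fraction $\frac{2m}{n}$ cannot be simplified in a way that would drop the order below $n$ to something odd. Once that parity bookkeeping is set down, both halves of the corollary become short corollaries of Proposition \ref{N_even}, and no further geometric input is required.
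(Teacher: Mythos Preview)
Your proof is correct and follows exactly the approach the paper intends: the corollary is stated without proof in the paper, immediately after Proposition~\ref{N_even} and the definition of ``even angle,'' precisely because both cases reduce to showing $N$ is even. Your handling of the second case via the composition of the two reflections (giving a rotation of order $n$ in the cyclic rotation subgroup of $D_N$, hence $n\mid N$) is the natural argument; the only remark is that your final paragraph about $m$ being odd is unnecessary, since $\gcd(m,n)=1$ alone already forces the order of the rotation by $\tfrac{2m}{n}\pi$ to be exactly $n$.
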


\subsection{$SL_2^\pm(\R)$ action, Aff($M$) and Veech Groups}
Let $SL_2^\pm(\R)$ denote the group of $2 \times 2$ matrices with determinant $\pm 1$. Given any matrix $A\!\in\!SL_2^\pm(\R)$ and a translation surface $M$, we can get a new translation surface $A\!\cdot\!M$ by post composing the charts of $M$ with $A$. The transition functions of $A\!\cdot\!M$ are translations since they are the transition functions of $M$ conjugated by $A$. 

\begin{Definition}
Let $M_1$ and $M_2$ be translation surfaces. A homeomorphism $f:M_1 \to M_2$ is called an \textit{isomorphism of flat structures} if it maps the singular points of $M_1$ to the singular points of $M_2$ and is a translation in the local coordinates of $M_1$ and $M_2$.
\end{Definition}

\begin{Definition}
An \textit{affine diffeomorphism} of a translation surface $M$ is a homeomorphism $f:M\to M$ that maps singular points to singular points and is an affine map in the local coordinates of the atlas of $M$. This is equivalent to the fact that $f$ is an isomorphism of the flat structures $A\!\cdot\!M$ and $M$, where $A\!\in\!SL_2^\pm(\R)$. $A$ is called the \textit{linear part} of the automorphism $f$.
\end{Definition}

Denote by Aff($M$) the affine automorphism group of $M$. Following the construction in $\S 3.2.2$ of flat structure $M_P$ obtained from the 
billiard in polygon $P$, $G_P \subseteq \text{Aff}(M_P)$, since for each $g_i \in G_P$, $g_i M_P=M_P$. 

For example, one can easily see this action in Figure \ref{unfolding}. In this figure, we have the octagon, the surface $M_P$ obtained from the billiard in the triangle $P=\left(\frac{\pi}{2}, \frac{\pi}{8}, \frac{3\pi}{8} \right)$.$M_P$ made up of all the translations of $P$ by $g\in G_P$, i.e. $M_P=\cup_{i=0}^{15}g_iP$. Hence, $\forall g\in G_P$ we have: $gM_P=g\cup_{i=0}^{15}g_iP=\cup_{i=0}^{15}\tilde{g_i}P=M_P$.

\begin{Definition}
The \textit{Veech group} $\Gamma_M$ of a flat structure $M$ is the set of elements $A\!\in\!SL_2^\pm(\R)$ such that the flat structures $A\!\cdot\!M$ and $M$ are isomorphic. 
\end{Definition}

The following property of Veech groups is well known -

\begin{prop}
$\Gamma_M$ is a discrete and non-cocompact subgroup of $SL_2^\pm(\R)$.
\end{prop}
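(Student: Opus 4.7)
The plan is to extract from $M$ a $\Gamma_M$-invariant combinatorial object, namely the set $\mathcal{V}\subset\R^2$ of holonomy vectors of saddle connections of $M$, and to deduce both assertions from its geometry. Three standard facts about $\mathcal{V}$ will do all the work. First, $\mathcal{V}$ is a discrete subset of $\R^2$, since for any $R>0$ only finitely many saddle connections on $M$ have length at most $R$. Second, because $0\notin\mathcal{V}$, the infimum $\ell_0:=\inf_{v\in\mathcal{V}}|v|$ is strictly positive. Third, $\mathcal{V}$ contains two $\R$-linearly independent vectors (a standard fact for any closed translation surface). The crucial input is $\Gamma_M$-invariance: if $A\in\Gamma_M$ and $f:A\cdot M\to M$ is an isomorphism of flat structures, then $f$ carries saddle connections to saddle connections, and because the charts of $A\cdot M$ are obtained from those of $M$ by postcomposition with $A$ while $f$ is a translation in the local charts, the holonomy transforms as $v\mapsto Av$. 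Hence $A\mathcal{V}=\mathcal{V}$.

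For discreteness, I will fix $v_1,v_2\in\mathcal{V}$ linearly independent, and observe that any $A\in SL_2^\pm(\R)$ is uniquely determined by the pair $(Av_1,Av_2)$. By discreteness of $\mathcal{V}$ each $v_i$ has an open neighborhood $U_i\subset\R^2$ meeting $\mathcal{V}$ only in $\{v_i\}$. The set $\{A\in SL_2^\pm(\R)\colon Av_i\in U_i,\ i=1,2\}$ is then an open neighborhood of the identity; any $A\in\Gamma_M$ in that neighborhood must satisfy $Av_1=v_1$ and $Av_2=v_2$, forcing $A=I$. So the identity is isolated in $\Gamma_M$, which proves discreteness.

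For non-cocompactness I argue by contradiction. Assume $\Gamma_M$ is cocompact, so there is a compact set $K\subset SL_2^\pm(\R)$ with $K\Gamma_M=SL_2^\pm(\R)$. Pick any $v\in\mathcal{V}$ and build a flow that contracts $v$ to $0$: choose $r\in SO(2)$ with $rv=(|v|,0)$ and set $h_t:=r^{-1}g_t r$ where $g_t=\mathrm{diag}(e^{-t},e^t)$, so that $|h_t v|=e^{-t}|v|\to 0$ as $t\to\infty$. Writing $h_t=k_t\gamma_t$ with $k_t\in K$ and $\gamma_t\in\Gamma_M$, we have $\gamma_t v=k_t^{-1}h_t v$; since the operator norms of $k_t^{-1}$ are uniformly bounded as $k_t$ ranges over $K$, this forces $|\gamma_t v|\to 0$. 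But $\gamma_t v\in\mathcal{V}\setminus\{0\}$, contradicting $\ell_0>0$.

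The main obstacle is really the setup rather than any individual step: once $\mathcal{V}$ with its three properties and its $\Gamma_M$-invariance is in place, both halves of the proposition fall out quickly. The non-cocompactness half is the more delicate of the two, because it relies on exhibiting a specific one-parameter family $h_t$ that contracts a chosen $v\in\mathcal{V}$ and then using cocompactness to pull $h_t$ back into a compact set via $\Gamma_M$, rather than only on the discreteness of $\mathcal{V}$.
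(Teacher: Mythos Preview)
The paper does not give a proof of this proposition; it simply records it as a well-known fact and moves on. Your argument via the $\Gamma_M$-invariant set $\mathcal{V}$ of holonomy vectors of saddle connections is exactly the standard one in the literature (see for instance \cite{Vor96} or \cite{HS intro}), and both halves are correctly carried out.

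One minor caveat worth making explicit: your argument uses that $\mathcal{V}$ is nonempty and spans $\R^2$, which fails precisely when $M$ is a flat torus with $\Sigma_M=\varnothing$ (in the paper's convention only points of cone angle $>2\pi$ are singular). In that degenerate case one replaces $\mathcal{V}$ by the lattice of holonomy vectors of closed geodesics, and the identical reasoning applies; alternatively one simply observes that the Veech group of $\R^2/\Lambda$ is the stabilizer of $\Lambda$ in $SL_2^\pm(\R)$, which is conjugate to $GL_2(\Z)$ and hence discrete and non-cocompact.
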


We call a translation surface a \textit{lattice surface} if $\Gamma_M$ is a lattice in $SL_2^\pm(\R)$. If $P$ is a polygon such that $M_P$ is a lattice surface, we say that $P$ has the lattice property.\\

In view of the above, the Veech group $\Gamma_M$ is the image of Aff($M$) under the derivative map \text{$D:\text{Aff}(M) \to \Gamma_M$}. The derivative map has a finite kernel \cite{Veech89}. For a translation surface $M$, denote ker$(D)$ by Trans$(M)$. Therefore we have an exact sequence: \[	1 \longrightarrow \text{Trans}(M) \xrightarrow{\;\;\, i \;\; } 
\text{Aff}(M) \xrightarrow{\;\: D \;\; } \Gamma_M \longrightarrow 1 \]
There are translation surfaces with \text{Trans$(M)\neq \{Id\}$}, for those surfaces we cannot identify Aff$(M)$ with $\Gamma_M$. 

\subsection{Veech's Dichotomy}
Fix a translation surface $M$. A starting point in $M$ and an angle $\theta$ determine a trajectory. A trajectory which begins and ends at a singular point is called a \textit{saddle connection}. A trajectory which does not hit singular points is called \textit{infinite}. A direction in which all infinite trajectories are dense is said to be \textit{minimal}. A direction in which all infinite trajectories are uniformly distributed is said to be \textit{uniquely ergodic}. A direction in which all orbits are periodic or saddle connections is said to be \textit{completely periodic}. Every periodic trajectory is contained in a maximal family of parallel periodic trajectories of the same period. If the surface is not a torus, then this family fills out a \textit{cylinder} bounded by saddle connections.
\begin{figure}[h!]
\begin{center}
\includegraphics[scale=0.7]{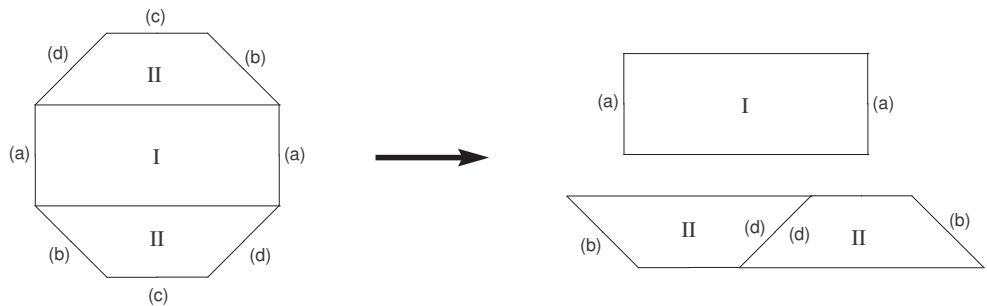}
\caption{Cylinder decomposition of the regular octagon in the horizontal direction.}
\label{cylinder}			
\end{center}
\end{figure}

\begin{thm} \cite{ZK}, \cite{BKM} 
\label{ZK}
If there are no saddle connections in direction $\theta$, then direction $\theta$ is minimal.
\end{thm}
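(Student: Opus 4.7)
The plan is to argue by contrapositive: assuming direction $\theta$ is not minimal, I exhibit a saddle connection in direction $\theta$. Suppose the straight-line flow $\phi_t^\theta$ on $M$ in direction $\theta$ admits an infinite trajectory $\gamma$ whose closure $F = \overline{\gamma}$ is a proper closed subset of $M$. Then $F$ is flow-invariant and $\partial F$ is non-empty, closed, and flow-invariant; pick a non-singular point $p \in \partial F$ and a short open segment $I \subseteq M$ transverse to direction $\theta$ and containing $p$. Since $p \in \overline{\gamma}$ and $I$ is transverse to the flow, $\gamma$ crosses $I$ near $p$, so there exists a point $q \in \gamma \cap I$.

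The heart of the argument is the first-return map $T : I \to I$ to the transversal $I$. By compactness of $M$ and finiteness of $\Sigma$, $T$ is defined off a finite set of discontinuity points in $I$ and is a translation on each interval of continuity; thus $T$ is an interval exchange transformation (IET). The discontinuities of $T$ correspond to separatrices from points of $\Sigma$ first meeting $I$ in direction $-\theta$. The hypothesis that there are no saddle connections in direction $\theta$ translates into Keane's infinite-distinct-orbit condition for $T$: a coincidence of the form $T^n(x_i^+) = x_j$ for discontinuities $x_i$ and $x_j$ would trace out a finite orbit segment joining two singularities, i.e., a saddle connection in direction $\theta$, contradicting the hypothesis.

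By Keane's theorem, an IET satisfying the Keane condition is minimal, so every $T$-orbit in $I$ is dense. In particular $\{T^n q\}_{n \in \Z}$ is dense in $I$; since each $T^n q$ lies on $\gamma \subseteq F$ and $F$ is closed, this forces $I \subseteq F$. Flow-invariance then gives $\bigcup_{|t| < \epsilon} \phi_t^\theta(I) \subseteq F$, a two-dimensional open neighborhood of $p$ in $M$, contradicting $p \in \partial F$.

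The main technical obstacle is the rigorous verification that the first-return map to a generic short transversal on a compact flat surface is a bona fide IET with finitely many discontinuities, each traceable to a separatrix of the flow; this requires some care with return times and with the placement of $I$ so that its endpoints do not create extraneous discontinuities. Once this standard setup is in place and Keane's theorem is invoked as a black box, the implication ``no saddle connection in direction $\theta$ implies minimality'' follows cleanly from the contradiction above.
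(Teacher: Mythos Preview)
The paper does not supply a proof of this theorem; it is quoted as a known result with citations to \cite{ZK} and \cite{BKM}. Your argument is the standard one from that literature: reduce to a first-return interval exchange on a transversal, observe that the absence of saddle connections gives Keane's infinite-distinct-orbit condition, and invoke Keane's minimality theorem to derive a contradiction with the existence of a non-dense orbit. The outline is correct.

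Two small points worth tightening. First, when you pick a non-singular $p\in\partial F$, you should say why $\partial F$ is not contained in the finite singular set $\Sigma$: otherwise $F\setminus\Sigma$ would be open and closed in the connected punctured surface $M\setminus\Sigma$, forcing $F=M$ or $F\subseteq\Sigma$, both impossible. Second, Keane's condition also involves the endpoints of the transversal $I$; you need $I$ chosen so that its endpoints do not lie on separatrices, which you allude to but do not state explicitly. With these caveats, your proof is complete and matches the approach of the cited references.
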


{\bf Veech's dichotomy:} A translation surface satisfies \textit{Veech's dichotomy} if each direction is either completely periodic or uniquely ergodic.

\begin{Thm} (Veech's theorem, \cite{Veech89}) 
Suppose that $M$ is a lattice surface. Then $M$ satisfies Veech's dichotomy. 
\end{Thm}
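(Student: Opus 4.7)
The plan is to reduce the problem to the dynamics of the Teichm\"uller geodesic flow on the orbit of $M$ in the moduli space of translation surfaces. Given a direction $\theta$, first rotate by $r_{-\theta} \in SO(2)$ so that $\theta$ becomes the vertical direction; since rotations do not affect which directions are completely periodic or uniquely ergodic, it suffices to analyze the vertical direction on $r_{-\theta} \cdot M$. Then I would study the forward orbit $\{g_t \cdot r_{-\theta} \cdot M\}_{t \geq 0}$ under the Teichm\"uller flow $g_t = \text{diag}(e^t, e^{-t})$. The lattice hypothesis is crucial here: the $SL_2(\R)$-orbit of $M$ can be identified with the homogeneous space $SL_2(\R) / \Gamma_M$, which has finite volume and finitely many cusps, each corresponding to a conjugacy class of maximal parabolic subgroups of $\Gamma_M$.

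The core of the proof is a topological dichotomy for the forward $g_t$-orbit of $r_{-\theta} \Gamma_M$ in $SL_2(\R) / \Gamma_M$: either the orbit returns to some fixed compact set along an unbounded sequence of times (the \emph{recurrent} case), or it eventually leaves every compact set (the \emph{divergent} case). Since $SL_2(\R)/\Gamma_M$ is a finite volume quotient, divergence necessarily means the orbit escapes into one of the finitely many cusps.

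In the recurrent case, I would invoke Masur's criterion for unique ergodicity, which asserts that if the Teichm\"uller geodesic flow applied to a translation surface is recurrent to a compact set in moduli space, then the vertical straight-line flow is uniquely ergodic. Thus, on $r_{-\theta} \cdot M$ the vertical direction is uniquely ergodic, and hence the original direction $\theta$ on $M$ is uniquely ergodic. In the divergent case, the orbit escapes into a cusp which corresponds to a nontrivial parabolic element $P \in \Gamma_M$ whose unique fixed direction in $\R P^1$ coincides with the vertical direction (otherwise the $g_t$-orbit could not diverge along this particular cusp). The remaining step is the classical fact that if the Veech group contains a parabolic element whose invariant direction is $\theta$, then $M$ decomposes as a finite union of cylinders in direction $\theta$, so $\theta$ is completely periodic.

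The main obstacles are the two imported ingredients: (i) Masur's criterion relating recurrence of the geodesic flow on moduli space to unique ergodicity of the straight-line flow, and (ii) the structural result that a parabolic element of $\Gamma_M$ forces a cylinder decomposition in its invariant direction. For (ii), one uses that the parabolic fixes $\theta$ and acts as a shear; if there were no cylinder decomposition, the orbit of a saddle connection under powers of the parabolic would produce infinitely many distinct saddle connections of bounded length, contradicting discreteness of the set of saddle connections. Combining these two inputs with the geodesic flow dichotomy on the finite volume quotient $SL_2(\R) / \Gamma_M$ yields the conclusion; no further recourse to the specific geometry of $M$ is needed once the lattice hypothesis is in place.
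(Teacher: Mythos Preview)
The paper does not supply its own proof of this theorem; it is simply quoted as a known result with a citation to Veech's original 1989 paper, so there is no in-paper argument to compare against.

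That said, your outline is a correct sketch of the now-standard proof. The reduction to the vertical direction, the dichotomy between recurrence and divergence of the $g_t$-orbit on the finite-volume quotient $SL_2(\R)/\Gamma_M$, the appeal to Masur's criterion in the recurrent case, and the identification of divergence with escape into a cusp associated to a parabolic fixing the given direction are exactly the ingredients used in modern expositions (for instance in the Hubert--Schmidt survey cited in the paper as \cite{HS intro}). Your justification for step (ii) --- that a parabolic fixing $\theta$ forces a cylinder decomposition because otherwise its powers would produce infinitely many short saddle connections --- is a valid heuristic; the cleaner formulation is that the affine automorphism with parabolic derivative preserves the foliation in direction $\theta$ and acts with bounded displacement, forcing every leaf to be closed or a saddle connection. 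Either way the conclusion stands, and nothing in your outline is wrong or missing at the level of a proof sketch.
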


In \cite{CHM} there are definitions for different dichotomies of translation surfaces: A translation surface is said to satisfy \textit{topological dichotomy} if for every direction - if there is a saddle connection in direction $\theta$, then there is a cylinder decomposition of the surface in that direction. A translation surface is said to satisfy \textit{strict ergodicity} if every minimal direction is uniquely ergodic. Lattice surfaces satisfy both topological dichotomy and strict ergodicity.

\subsection{Covers of Flat Structures}

\subsubsection{Riemann-Hurwitz formula}
The Riemann-Hurwitz formula describes the relation between the Euler characteristics of two surfaces when one is a ramified covering of the other.

\begin{Definition}
The map $\pi:\widetilde{M} \to M$ is said to be \textit{ramified} at a point $p \in \widetilde{M}$, if there exists a small neighborhood $U$ of $p$, such that $\pi(p)$ has exactly one pre-image in $U$, but the image of any other point in $U$ has exactly $n>1$ pre-images in $U$. The number $n$ is called the \textit{ramification index} at $p$, also denoted by $e_p$. The map $\pi$ is said to be \textit{branched} over a point $z \in M$ if $\pi$ is ramified at a point of $\pi^{-1}(z)$. 
\end{Definition}

The \textit{degree} of the map $\pi:\widetilde{M} \to M$, denoted by $d$, is the number of pre-images of non-singular points in $M$ (independent of the point). For an orientable surface $M$, the Euler characteristic $\chi(M)$ is the number $2-2g$, where $g$ is the genus of $M$. In the case of an unramified covering map of surfaces $\pi:\widetilde{M} \to M$ that is surjective and of degree $d$, we get the formula: \[\chi(\widetilde{M})=d\chi(M)\] 
The Riemann-Hurwitz formula adds a correction for ramified covers. In calculating the Euler characteristic of $\widetilde{M}$ we notice the loss of $e_p-1$ copies of $p$ above $\pi(p)$ (for more explanations see \cite{Wiki}). Therefore the corrected formula for ramified covering is: \[\chi(\widetilde{M})=d\chi(M)-\sum_{p \in \widetilde{M}}(e_p-1)\]

\subsubsection{Translation covering}
Let $M'=M\setminus \Sigma_M$. We say that a map $\pi:M \to N$ gives a \textit{translation covering} of $N$ by $M$, if the restriction $\pi:M' \to N'$ is such that $\psi \circ \pi \circ \phi^{-1}$ are translations, where $\psi$ and $\phi$ are the local coordinates of the atlases of $N$ and $M$. Note that if a translation covering is ramified, it is ramified over singularities or marked points only.\\

The following construction gives examples of covers of flat structures associated with billiards in polygons: Let $P$ and $Q$ be rational polygons such that $P$ tiles $Q$ by reflections. This means that $Q$ is partitioned into a finite number of isometric copies of $P$. Each two are either disjoint, or have a common vertex or a common side, and if two of these polygons have a side in common, then they are symmetric with respect to this side. The construction in $\S 3.2$ associates with $P$ and $Q$ translation surfaces $M_P$ and $M_Q$.

\begin{prop} \cite{Vor96} 
The flat structure $M_Q$ covers $M_P$, possibly after adding to $M_Q$ or removing from $M_P$ a certain number of removable singular points. 
\end{prop}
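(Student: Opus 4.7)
The plan is to build an explicit translation covering $\pi\colon M_Q \to M_P$ that realizes $M_Q$ as a union of copies of $P$ sitting above the $2N_P$ copies of $P$ that make up $M_P$. The first observation is that, since $P$ tiles $Q$ by reflections, every side of $Q$ is a concatenation of sides of $P$-tiles, so the reflection directions appearing in $G_Q$ are a subset of those appearing in $G_P$. Consequently $G_Q \subseteq G_P$, and in particular $N_Q$ divides $N_P$. Each copy $hQ$ (with $h \in G_Q \subseteq G_P$) used to build $M_Q$ is partitioned by the tiling into finitely many isometric copies of $P$; for every such $P$-tile $T$ there is a unique element $\ell_T \in A_P$ carrying $P$ onto $T$, and its linear part $g_T$ lies in $G_P$. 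I would define $\pi|_T$ to be the unique translation sending $T$ onto the copy $g_T P \subseteq M_P$.

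The substance of the argument is to check that $\pi$ extends consistently across the two types of edges in $M_Q$: edges shared by two $P$-tiles inside a single copy $hQ$, and edges along which two distinct copies $h_1 Q, h_2 Q$ are glued during the construction of $M_Q$. In both cases the two $P$-tiles adjacent to the edge are related by the reflection through that edge, so their linear parts in $G_P$ differ precisely by that reflection. By the pairing rule used to construct $M_P$ in $\S 3.2.2$, the two corresponding copies of $P$ in $M_P$ are themselves glued along the image edge via a translation. Hence the local translations that define $\pi$ on adjacent tiles agree on their shared edge, and $\pi$ extends continuously to all of $M_Q$ minus the vertex set, giving a translation map in local coordinates.

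Finally I would deal with the vertices, which is the source of the ``adding or removing removable singularities'' caveat. By Remark \ref{gluing}, a vertex of $P$ or $Q$ descends to a regular point of the respective flat structure if and only if its angle has the form $\pi/n$. A vertex of $Q$ that is regular in $M_Q$ may project under $\pi$ onto a vertex of $P$ that is singular in $M_P$; in that case I would add it to the marked set of $M_Q$ as a removable singular point. Conversely, a vertex of $P$ that is singular in $M_P$ but whose fibre under $\pi$ consists entirely of regular points can be removed from $\Sigma_{M_P}$. After these adjustments $\pi$ becomes a translation covering of flat structures. The main obstacle is the bookkeeping in the edge-matching step: one must verify that the assignment $T \mapsto g_T$ is simultaneously compatible with reflections gluing two $P$-tiles inside a single $hQ$ and with reflections gluing two copies of $Q$ in $M_Q$, and that both propagate to the same identification of copies of $P$ in $M_P$.
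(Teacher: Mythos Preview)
The paper does not supply its own proof of this proposition; it is quoted from Vorobets \cite{Vor96} and used as a black box. So there is nothing in the paper to compare your argument against. Your outline is essentially the standard construction, and the edge-matching step---checking that the assignment $T\mapsto g_T P$ is compatible both with interior reflections inside a copy of $Q$ and with the $G_Q$-gluings that assemble $M_Q$---is the substantive point, which you have identified correctly.

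One correction to your treatment of the vertices. Both scenarios you describe are in fact impossible. A translation covering is a local isometry away from ramification, and at a ramification point of index $e$ the cone angle upstairs equals $e$ times the cone angle downstairs; hence the cone angle upstairs is always at least the cone angle downstairs. So a \emph{regular} point of $M_Q$ can never project to a \emph{genuine} singularity of $M_P$, and a genuine singularity of $M_P$ can never have an entirely regular fibre. The actual bookkeeping is the following: a vertex of $P$ with angle $\pi/n$ yields a \emph{removable} marked point of $M_P$; its preimages in $M_Q$ may be regular (in which case one may either mark them in $M_Q$ or unmark the image in $M_P$) or may be genuine singularities (in which case the marking in $M_P$ must be retained). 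This is what the clause ``adding to $M_Q$ or removing from $M_P$ a certain number of removable singular points'' refers to. With that adjustment your sketch is correct.
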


\begin{remark}
\label{observation}
\textbf{Some observations:}
\begin{enumerate}
	\item Let $P$ and $Q$ be polygons such that $P$ tiles $Q$ by reflections, then the branch points of the covering map 
		$\pi:M_Q \to M_P$ can arise only from the vertices in $P$.
	\item One can see that the degree of the cover is $d=\frac{n}{m}$, where $n$ is the number of copies of $P$ in $Q$ and $m$ is the 
		index of the subgroup $G_Q$ in $G_P$.
	\item $G_Q$ is a subgroup of $G_P$ and by $\S 3.2.2$ $G_Q$ is also a dihedral group. Hence $G_Q$ must be one of the dihedral groups 
		$D_{N_i}$, where $N_i$ is a divisor of $N$.
\end{enumerate}
\end{remark}

\begin{prop}
\label{branching}
Suppose $P$ and $Q$ are polygons such that $P$ tiles $Q$ by reflections. 
\begin{enumerate}
\item Let $z_0 \in M_P$ be a point which corresponds to a vertex with angle $\frac{m_0\pi}{n_0}$ in $P$ with $(m_0,n_0)=1$.
\item $y_i \in \pi^{-1}(z_0) \subset M_Q$, $i=1, \ldots l$ the pre-images of $z_0$ which corresponds to vertices with angles $\frac{k_i\cdot m_0 \pi}{n_0}$ in $Q$. 
\end{enumerate}
Then the cover $\pi: M_Q \to M_P$ will branch over $z_0$ if and only if there exists $i_0$ such that $k_{i_0} \nmid n_0$.
\end{prop}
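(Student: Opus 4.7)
My plan is to reduce the statement to a direct computation of cone angles via Remark \ref{gluing}, together with the fact that a translation covering is a local isometry away from singular points. Concretely, in flat cone coordinates $\pi$ has local model $z \mapsto z^{e_{y_i}}$ near $y_i$, so going once around $y_i$ (a total angle equal to the cone angle at $y_i$ in $M_Q$) wraps around $z_0$ exactly $e_{y_i}$ times, yielding
\[
e_{y_i} \;=\; \frac{\text{cone angle at } y_i \text{ in } M_Q}{\text{cone angle at } z_0 \text{ in } M_P}.
\]
Once this formula is in hand, everything reduces to computing the two cone angles and comparing.

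The first computation is immediate from Remark \ref{gluing}: the fraction $\frac{m_0}{n_0}$ is already in lowest terms, so the cone angle at $z_0$ in $M_P$ is $2\pi m_0$. The second is slightly subtler because $\frac{k_i m_0}{n_0}$ need not be in lowest terms. However, the hypothesis $(m_0, n_0)=1$ gives the clean identity $\gcd(k_i m_0, n_0) = \gcd(k_i, n_0) =: d_i$, and so in lowest terms the angle at the vertex of $Q$ corresponding to $y_i$ is $\frac{(k_i m_0/d_i)\pi}{n_0/d_i}$. Applying Remark \ref{gluing} to this reduced form, the cone angle at $y_i$ in $M_Q$ is $2\pi k_i m_0/d_i$. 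Substituting into the ratio formula,
\[
e_{y_i} \;=\; \frac{2\pi k_i m_0/d_i}{2\pi m_0} \;=\; \frac{k_i}{\gcd(k_i, n_0)}.
\]

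The conclusion is then a purely arithmetic observation: $e_{y_i} = 1$ precisely when $\gcd(k_i, n_0) = k_i$, i.e., precisely when $k_i \mid n_0$. Thus $\pi$ is unramified at $y_i$ iff $k_i \mid n_0$, and it branches over $z_0$ iff it is ramified at some $y_{i_0} \in \pi^{-1}(z_0)$, iff there exists $i_0$ with $k_{i_0} \nmid n_0$. The only point that genuinely needs care is the reduction to lowest terms at $y_i$ before quoting Remark \ref{gluing}; the hypothesis $(m_0, n_0)=1$ is exactly what makes this simplification clean and keeps $m_0$ from contributing to $d_i$.
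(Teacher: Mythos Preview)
Your proof is correct and follows essentially the same approach as the paper's own proof: both compute the cone angle at $z_0$ as $2\pi m_0$ and at $y_i$ as $2\pi k_i m_0/\gcd(k_i,n_0)$ via Remark \ref{gluing}, then observe that branching occurs exactly when these differ, i.e.\ when $k_i \nmid n_0$. You are slightly more explicit about reducing $\frac{k_i m_0}{n_0}$ to lowest terms before invoking Remark \ref{gluing} and about phrasing the comparison as $e_{y_i}=k_i/\gcd(k_i,n_0)$, but the argument is the same.
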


\begin{proof}
The covering map $\pi: M_Q \to M_P$ will branch over a point $z_0 \in M_P$ if and only if the total angle around $z_0 \in M_P$ differs from the total angle around one of the pre-images $\pi^{-1}(z_0) \in M_Q$. Following Remark \ref{gluing}, the total angle of $z_0$ is $2m_0\pi$, and the total angle of $\pi^{-1}(z_0)$ is $\frac{2k_im_0}{gcd(k_i,n_0)}\pi$. Therefore, they are equal if and only if for all $i$, $gcd(k_i,n_0)=k_i \Leftrightarrow k_i \mid n_0$.
\end{proof}

\begin{Definition}
A \textit{periodic point} on a translation surface $M$ is a point which has a finite orbit under the group $\text{Aff}(M)$.
\end{Definition}

In particular, since every $\varphi \in \text{Aff}(M)$ maps singular points to singular points, and $\Sigma$ is finite, any singular point of $M$ is periodic.\\

According \cite{GJ96}, recall that two groups 
$\Gamma_1, \Gamma_2 \subset SL_2(\R)$ are \textit{commensurable}, if there exists $g \in SL_2(\R)$ such that the group 
$\Gamma_1 \cap g\Gamma_2 g^{-1}$ has finite index in both $\Gamma_1$ and $g\Gamma_2 g^{-1}$.

\begin{prop} \label{commensurable} \cite{Vor96} \cite{GJ96}
Let $M_P$ and $M_Q$ be as above. Then $\Gamma_{M_Q}$ is commensurable to the stabilizer in $\Gamma_{M_P}$ of the branch locus of $\pi$. 
Thus, if $\Gamma_{M_P}$ is a lattice in $SL_2(\R)$, $\Gamma_{M_Q}$ is also a lattice in $SL_2(\R)$ if and only if the branching is 
over periodic points.
\end{prop}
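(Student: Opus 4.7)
The plan is to establish a correspondence, defined up to finite index on both sides, between $\Gamma_{M_Q}$ and the stabilizer in $\Gamma_{M_P}$ of the branch locus $B \subset M_P$ of $\pi$, and then deduce the lattice statement from it. For the lifting direction, I would take $A \in \Gamma_{M_P}$ stabilizing $B$ with an affine representative $\varphi \in \text{Aff}(M_P)$. The restriction $\varphi|_{M_P \setminus B}$ is a self-homeomorphism, while $\pi$ restricts to an unramified degree-$d$ cover of $M_P \setminus B$. By standard covering theory, $\varphi$ lifts to $M_Q \setminus \pi^{-1}(B)$ precisely when $\varphi_*$ preserves the index-$d$ subgroup $H \le \pi_1(M_P \setminus B, p_0)$ classifying this cover. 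Since there are only finitely many subgroups of a given finite index, the action of the stabilizer of $B$ in $\text{Aff}(M_P)$ on that finite set factors through a finite quotient; on its finite-index kernel every such $\varphi$ admits a lift $\widetilde\varphi$, which extends continuously across $\pi^{-1}(B)$ and is affine with the same linear part $A$, because $\pi$ is a translation covering.

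Conversely, for $\widetilde\varphi \in \text{Aff}(M_Q)$ the composition $\pi \circ \widetilde\varphi : M_Q \to M_P$ is another degree-$d$ translation covering, classified by another index-$d$ subgroup of $\pi_1(M_P \setminus B)$. Again by finiteness, a finite-index subgroup of $\text{Aff}(M_Q)$ preserves the covering data, and each such $\widetilde\varphi$ descends to some $\varphi \in \text{Aff}(M_P)$ with $\varphi \circ \pi = \pi \circ \widetilde\varphi$, same linear part, and $\varphi(B) = B$ automatically. Pushing both constructions forward by the derivative map $D$, whose kernel $\text{Trans}(M)$ is finite by \cite{Veech89}, identifies finite-index subgroups on the two sides up to finite kernels, giving the desired commensurability of $\Gamma_{M_Q}$ with $\text{Stab}_{\Gamma_{M_P}}(B)$.

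For the final statement, I would use that commensurability preserves the property of being a lattice in $SL_2(\R)$. If every point of $B$ is periodic, then the $\text{Aff}(M_P)$-orbit of $B$ is finite, so $\text{Stab}_{\Gamma_{M_P}}(B)$ has finite index in $\Gamma_{M_P}$ and is itself a lattice whenever $\Gamma_{M_P}$ is. Conversely, if some point of $B$ has infinite $\text{Aff}(M_P)$-orbit, $\text{Stab}_{\Gamma_{M_P}}(B)$ has infinite index in the lattice $\Gamma_{M_P}$ and hence cannot be a lattice itself, so neither can $\Gamma_{M_Q}$. The main obstacle I anticipate is carrying out the covering-theoretic arguments uniformly across all of $\text{Aff}$, so that one really obtains a single finite-index subgroup on which lifts and descents are simultaneously well-defined, and verifying that the extensions of lifts across the branch points preserve the flat structure and not merely the underlying topological surface.
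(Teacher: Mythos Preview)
The paper does not give its own proof of this proposition; it is quoted from \cite{Vor96} and \cite{GJ96}, so there is no in-paper argument to compare against. On the merits of your sketch: the lifting direction is essentially correct, and the final lattice/periodic-point deduction is fine.

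The descent direction, however, has a genuine gap. First, $\pi\circ\widetilde\varphi$ is not a translation covering: in charts it has linear part $A=D\widetilde\varphi$, not the identity. More seriously, even viewed as a topological branched cover of $M_P$ with branch locus $B$, the map $\pi\circ\widetilde\varphi$ is \emph{always} equivalent to $\pi$: one has
\[
(\pi\circ\widetilde\varphi)_*\,\pi_1\bigl(M_Q\setminus\pi^{-1}(B)\bigr)
=\pi_*\,\widetilde\varphi_*\,\pi_1\bigl(M_Q\setminus\pi^{-1}(B)\bigr)
=\pi_*\,\pi_1\bigl(M_Q\setminus\pi^{-1}(B)\bigr)=H,
\]
since $\widetilde\varphi_*$ is an automorphism of that fundamental group. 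So your ``finiteness'' selects nothing, and the condition you obtain (an isomorphism of covers $f$ with $\pi\circ\widetilde\varphi\circ f=\pi$) only says $\widetilde\varphi\circ f\in\mathrm{Deck}(\pi)$. What descent actually requires is that $\widetilde\varphi$ sends $\pi$-fibers to $\pi$-fibers, i.e.\ that there exist $\varphi$ with $\varphi\circ\pi=\pi\circ\widetilde\varphi$; this is a different condition and does not follow from equivalence of the two covers.

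The standard repair, as in the cited references, is to pass to the Galois closure $\widehat M\to M_P$ of $\pi$. There the deck group $G$ lies inside the finite normal subgroup $\mathrm{Trans}(\widehat M)\lhd\mathrm{Aff}(\widehat M)$, so conjugation by $\mathrm{Aff}(\widehat M)$ permutes the finitely many subgroups of $\mathrm{Trans}(\widehat M)$; the finite-index stabilizer of $G$ (and of the subgroup $H\le G$ with $M_Q=\widehat M/H$) then descends simultaneously to $M_P$ and to $M_Q$. Combined with your lifting argument applied to $\widehat M\to M_Q$, this yields the commensurability. Your closing worry about ``uniformity'' is well placed, but the missing idea is this passage through the normal cover, not merely bookkeeping.
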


\begin{remark}
\label{non-periodic}
The last proposition along with Lemma 4 in \cite{HS covering} give us a test for non-periodicity of a point in a lattice surface: A point in a lattice surface $M$ which irrationally splits the height of a cylinder of $M$ is a non-periodic point.
\end{remark}

\subsection{The Converse to Veech's Theorem Does Not Hold}
In genus 2, McMullen showed that every surface which is not a lattice, does not satisfy Veech's dichotomy \cite{Mc decagon}. In other words, if a surface is of genus 2, then it satisfies Veech's dichotomy if and only if its Veech group is a lattice.

\begin{Thm} \cite{SW} 
There is a flat structure which satisfies Veech's dichotomy but its Veech group is not a lattice.
\end{Thm}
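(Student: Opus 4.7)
The plan is to construct the desired flat structure $\widetilde M$ as a finite translation cover $\pi:\widetilde M\to M$ of a concrete lattice surface $M$, branched only over a single point $p\in M$ that is simultaneously \emph{non-periodic} and a \emph{connection point}. Following Hubert and Schmidt, by a connection point I mean a non-singular $p$ such that every separatrix passing through $p$ extends to a saddle connection of $M$; equivalently, because $M$ is a lattice surface, in every direction in which $M$ admits a saddle connection there is one through $p$. The first task is therefore to exhibit such a $p$ on some lattice surface $M$. I would start from an explicit Veech surface (for instance the one coming from the $(\pi/2,\pi/8,3\pi/8)$ triangle depicted in Figure \ref{unfolding}, or the double regular pentagon), identify a horizontal cylinder $C\subset M$ of height $h$, and choose $p$ on the core of $C$ at height $\alpha h$ with $\alpha$ irrational; Remark \ref{non-periodic} then guarantees non-periodicity of $p$. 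The nontrivial arithmetic input is arranging, in addition, that $p$ is a connection point, which is the key technical contribution of Hubert–Schmidt that I would invoke.

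Given such a $p$, build $\widetilde M$ as follows: choose a loop $\gamma$ in $M\setminus\{p\}$ winding once around $p$, form the finite cover of $M\setminus\{p\}$ determined by a subgroup of index $d$ in $\pi_1(M\setminus\{p\})$ that contains $\gamma^d$, and complete it to a closed translation surface by filling in ramification points over $p$. By construction, $\pi$ is a translation covering branched only at $\pi^{-1}(p)$. By Proposition \ref{commensurable}, $\Gamma_{\widetilde M}$ is commensurable with the stabilizer of $\{p\}$ in $\Gamma_M$; since $p$ is not periodic, this stabilizer has infinite index in the lattice $\Gamma_M$ and hence is not itself a lattice. Consequently $\Gamma_{\widetilde M}$ is not a lattice.

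It remains to verify Veech's dichotomy on $\widetilde M$. Fix a direction $\theta$. If $\widetilde M$ has no saddle connection in direction $\theta$, then $M$ has no saddle connection in direction $\theta$ either (every saddle connection on $M$ lifts to one on $\widetilde M$); by Veech's theorem, $\theta$ is uniquely ergodic on $M$, and a standard lifting argument for finite translation covers---invariant measures on $\widetilde M$ push forward to invariant measures on $M$, and averaging over the deck group forces the lifted measure to be the unique normalised one---promotes this to unique ergodicity on $\widetilde M$. If, instead, $\widetilde M$ has a saddle connection in direction $\theta$, project it to $M$: either it projects to a saddle connection of $M$, or one of its endpoints lies over $p$ and the projection is a separatrix of $M$ through $p$ which, by the connection-point property, extends to a saddle connection of $M$ in direction $\theta$. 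In both cases, $M$ has a saddle connection in direction $\theta$, so by Veech's theorem $M$ is completely periodic in direction $\theta$. Lifting the cylinder decomposition of $M$ via $\pi$, and using the connection-point property once more to ensure that each preimage of $p$ lies on a lifted \emph{boundary} saddle connection rather than in the interior of a lifted cylinder, we obtain a genuine cylinder decomposition of $\widetilde M$. Thus $\theta$ is completely periodic on $\widetilde M$.

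The main obstacle is clearly the first step: producing a point which is simultaneously non-periodic and a connection point on a lattice surface. Non-periodicity is delivered essentially for free by Remark \ref{non-periodic}, but the connection-point property requires control over \emph{every} periodic direction of $M$ at once, and this is where the delicate number-theoretic input of Hubert–Schmidt (about the coordinates of $p$ in the trace field of $\Gamma_M$) becomes indispensable. Once such a $p$ is in hand, the non-lattice conclusion via Proposition \ref{commensurable} and the lifting of the dichotomy via Theorem \ref{ZK} together with the connection-point property are routine consequences, along the lines sketched above.
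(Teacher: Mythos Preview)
Your overall strategy---form a cover $\widetilde M\to M$ of a lattice surface $M$ branched over a single non-periodic connection point $p$ (supplied by Hubert--Schmidt), conclude non-lattice via Proposition~\ref{commensurable}, and verify the dichotomy by projecting to $M$---is exactly the route the paper and \cite{SW} take. Your Case~2 is essentially correct, though the connection-point property does \emph{not} force $p$ onto a cylinder boundary; complete periodicity lifts anyway, since if $p$ lies in the interior of a cylinder $C$ then the periodic orbit through $p$ lifts to saddle connections of $\widetilde M$ and splits $\pi^{-1}(C)$ into genuine cylinders.

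The real gap is in Case~1. Your ``standard lifting argument'' for unique ergodicity does not work. Averaging an invariant measure $\mu$ on $\widetilde M$ over the deck group produces a deck-invariant measure, and \emph{that} measure is indeed forced to be Lebesgue on $\widetilde M$; but $\mu$ need not equal its own average. One can have several ergodic flow-invariant measures on $\widetilde M$ permuted by the deck group, each pushing forward to Lebesgue on $M$. (Your cover is also not regular in general, so there may be no transitive deck group at all.) Unique ergodicity simply does not lift along translation covers as a general fact, and this is precisely the place where the paper invokes the nontrivial theorem of Smillie--Weiss \cite{SW}: for a cover of a lattice surface branched over a \emph{single} point, every minimal direction on $\widetilde M$ is uniquely ergodic. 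That result uses the single-branch-point hypothesis in an essential way and is the analytic input your sketch is missing; without it, Case~1 is unproved.
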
 

This result relied on previous work of Hubert and Schmidt. In their work they defined the notion of a non-periodic connection point on a flat structure, and proved that for some surfaces there exist infinitely many non-periodic connection points.

\begin{Definition}
A nonsingular point $p$ is a \textit{connection point} of a translation surface $M$, if every geodesic emanating from a singularity passing through $p$, is a saddle connection.
\end{Definition}

\begin{Thm} \cite{HS infty} 
There are lattice surfaces of genus 2 and 3 containing infinitely many non-periodic connection points. If $M$ is such a translation surface and $\widetilde{M}$ is a translation surface obtained by forming a cover of $M$ branched only at non-periodic connection points, then $\widetilde{M}$ is not a lattice surface, yet has the property that any direction is either completely periodic or minimal.
\end{Thm}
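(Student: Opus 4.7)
The plan is to split the statement into three independent parts and dispatch each with tools already set up earlier in the thesis.

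\textbf{Part 1: Producing the non-periodic connection points.} I would exhibit an explicit lattice surface in each of genus $2$ and $3$ (for instance surfaces coming from the billiard in the $(\pi/5,\pi/5,3\pi/5)$ and $(\pi/8,3\pi/8,\pi/2)$ triangles, or the double regular pentagon) and search for candidate points on it. Remark~\ref{non-periodic} gives a ready made non-periodicity criterion: any non-singular point whose height inside some cylinder divides the cylinder's height in an irrational ratio is non-periodic. So I would fix a cylinder decomposition of the candidate surface $M$, pick a point $p$ on a saddle connection whose height relative to the cylinder is irrational, and first verify non-periodicity this way. Then I would check that $p$ is a connection point; for a lattice surface this reduces to a countable verification, because $\Gamma_M$ has finitely many cusps and so acts with finitely many orbits on directions of saddle connections, so it is enough to verify the saddle-connection property for geodesics through $p$ in one direction per orbit. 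Once a single non-periodic connection point is produced, infinitely many are obtained by applying $\mathrm{Aff}(M)$: the $\mathrm{Aff}(M)$-orbit of a non-periodic point is infinite, and being a connection point is preserved under $\mathrm{Aff}(M)$.

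\textbf{Part 2: $\widetilde{M}$ is not a lattice surface.} This is essentially immediate from Proposition~\ref{commensurable}: the Veech group $\Gamma_{\widetilde{M}}$ is commensurable to the stabilizer in $\Gamma_M$ of the branch locus $B\subset M$. By hypothesis $B$ consists of non-periodic points, so each $\mathrm{Aff}(M)$-orbit through a point of $B$ is infinite, and hence the pointwise stabilizer has infinite index in $\Gamma_M$. Thus $\Gamma_{\widetilde{M}}$ is commensurable to an infinite-index subgroup of the lattice $\Gamma_M$, and cannot itself be a lattice in $SL_2(\R)$.

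\textbf{Part 3: completely periodic or minimal.} By Theorem~\ref{ZK}, it suffices to show that in any direction $\theta$ in which $\widetilde{M}$ has a saddle connection, $\widetilde{M}$ is in fact completely periodic. Let $\widetilde{\sigma}$ be such a saddle connection and let $\sigma=\pi(\widetilde{\sigma})$ be its projection to $M$. Each endpoint of $\sigma$ lies either in $\Sigma_M$ or in the branch locus $B$. If at least one endpoint is a singularity of $M$, then either $\sigma$ already is a saddle connection on $M$, or $\sigma$ starts at a singularity of $M$ and ends at a point $p\in B$; in the latter case the connection-point property of $p$ extends $\sigma$ to a full saddle connection of $M$ in direction $\theta$. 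In both sub-cases direction $\theta$ carries a saddle connection on the lattice surface $M$, so by Veech's theorem $M$ is completely periodic in direction $\theta$, and the translation cover lifts this cylinder decomposition to one on $\widetilde{M}$.

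\textbf{Main obstacle.} The leftover case is when both endpoints of $\sigma$ project into $B$, that is, $\sigma$ is a geodesic segment on $M$ in direction $\theta$ joining two non-periodic connection points and missing $\Sigma_M$. The connection-point hypothesis says nothing directly about geodesics that do not emanate from a singularity, so this case is the delicate one. I would attack it by flowing the trajectory backward from one endpoint $p_1\in B$ and showing that it must eventually hit a singularity: once it does, the connection-point property of $p_1$ promotes that (singularity, $p_1$)-geodesic to a saddle connection of $M$ in direction $\theta$, reducing back to the case handled in Part~3. Making this backward-flow argument work in general — and ruling out saddle connections on $\widetilde{M}$ coming from trajectories on $M$ that stay bounded away from $\Sigma_M$ forever — is where the real content of Hubert--Schmidt's construction must enter, and I expect it to rely on the specific way that the branch points are chosen inside the lattice surface $M$.
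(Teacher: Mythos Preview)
The thesis does not give its own proof of this statement; it is quoted from \cite{HS infty} as background in \S3.6, so there is no in-paper argument to compare against beyond the related single-branch-point case in Proposition~\ref{dichotomy}.

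That said, your Part~3 contains a real gap, not just an incompleteness. Your reduction via Theorem~\ref{ZK} aims to show that every saddle-connection direction on $\widetilde M$ is completely periodic. In the obstacle case (both endpoints of $\sigma$ in $B$), consider what happens when $\theta$ is minimal on $M$ --- equivalently, by Veech's theorem on the lattice surface $M$, when $\theta$ carries no saddle connection on $M$. Then your backward-flow attack \emph{must} fail: if the backward ray from $p_1\in B$ ever hit some $s\in\Sigma_M$, the connection-point property of $p_1$ would extend the segment $s\to p_1$ to a saddle connection on $M$ in direction $\theta$, contradicting minimality. So the backward ray never hits $\Sigma_M$, and you recover no saddle connection on $M$. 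Worse, your target conclusion is false here: a cylinder on $\widetilde M$ would project to a periodic orbit on $M$, impossible in a minimal direction, so $\widetilde M$ is \emph{not} completely periodic in $\theta$. The presence of a saddle connection on $\widetilde M$ between ramification points is compatible with $\theta$ being minimal on $\widetilde M$ (Theorem~\ref{ZK} is not an equivalence), and that is what one must actually prove.

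The cleaner organization is to split on the behaviour of $\theta$ on $M$ rather than on $\widetilde M$: if $\theta$ is completely periodic on $M$ then cylinders lift and $\widetilde M$ is completely periodic; if $\theta$ is minimal on $M$ one shows directly that $\widetilde M$ is minimal. Your Parts~1 and~2 are fine in outline (Part~2 is exactly Proposition~\ref{commensurable}; in Part~1 the ``one direction per cusp'' verification of the connection-point property is more delicate than you indicate, and Hubert--Schmidt handle it via the holonomy field rather than a finite check).
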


Weiss and Smillie show that provided the branching takes place over a single point (not necessarily a connection point), the minimal directions are uniquely ergodic. Thus, these surfaces satisfy Veech's dichotomy, though their Veech groups are not lattices.\\

Following the definitions in \cite{CHM}, Smillie and Weiss show that a surface satisfying both topological dichotomy and strict ergodicity need not be a lattice surface. The construction of Smillie and Weiss proves strict ergodicity for every cover over a lattice surface branched over one point. If the branch point is not a connection point, then the cover does not satisfy the topological dichotomy. This means that there are also examples that satisfy strict ergodicity and not topological dichotomy.\\

By the Riemann-Hurwitz formula, one can show the smallest genus, for which the arguments of \cite{SW} work, is 5. If $d=2$ and there is a single branch point, then the cover is ramified at one point with $e_p=2$, and the corresponding Riemann-Hurwitz formula is: $\chi(\widetilde{M})=2\chi(M)-1$. Since $\chi(\widetilde{M})$ is even, this cannot occur. So take $d\geq 3$, and denote by $g'$ the genus of $\widetilde{M}$. We get: \[\chi(\widetilde{M})=d\chi(M)-\sum_{p \in \widetilde{M}}(e_p-1)\] Since $\sum_{p \in \widetilde{M}}(e_p-1)>0 $ we get: \[d(2-2g)-(2-2g')=\sum_{p \in \widetilde{M}}(e_p-1)> 0 \qquad \Longrightarrow \qquad g'>1+d(g-1)\] Hence, the smallest genus $g'$ which satisfies the inequality is 5, which is obtained with $g=2$ and $d=3$.

\begin{Remark}
We ignore the possibility of $g=1$, since in that case, the cover is a square-tiled surface, and hence a lattice (see \cite{GJ00}).
\end{Remark}

\newpage

\section{The Main Result}

\textbf{\large{The Question I Explored}}
\textit {\large: Is there a flat structure obtained from a billiard table that satisfies Veech's dichotomy, yet its Veech group is not a lattice?}\\

In order to find an example, we tried to follow the construction in \cite{SW}. Therefore, we will look for polygons $P$ and $Q$ such that:
\begin{enumerate}
\item $P$ has the lattice property.
\item $P$ tiles $Q$ by reflections.
\item The branched covering map $\pi :M_Q \to M_P$ is branched over a single non-periodic connection point.
\end{enumerate}

So far we can barely indicate connection points. 
However, the following proposition shows that Veech's 
dichotomy holds, even if we omit the requirement that the 
branch point will be a connection point. Following this 
result, we can expand our search to find a cover which is 
branched only over one non-periodic point.

\begin{definition} 
We say that a cover $\pi:M_Q \to M_P$ is an 
\textit{appropriate cover} if:
\begin{enumerate}
\item $P$ be a lattice polygon.
\item $P$ tiles $Q$ by reflections.
\item The branched covering map $\pi:M_Q \to M_P$ 
		is branched over a single non-periodic point.
\end{enumerate}
\end{definition} 

\begin{prop}
\label{dichotomy}
Let $P$ and $Q$ be as above such that the cover $\pi :M_Q \to M_P$ 
is branched over a single point.
If the branching is over a single point it satisfies Veech dichotomy.
Moreover, if the point is non-periodic, it is not a lattice surface. 
\end{prop}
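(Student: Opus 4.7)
The plan is to fix an arbitrary direction $\theta$ on $M_Q$, which pushes down under $\pi$ to the same direction $\theta$ on $M_P$ (since $\pi$ is a translation cover). Because $M_P$ has the lattice property, Veech's theorem applies, so $\theta$ on $M_P$ is either completely periodic or uniquely ergodic. I will handle these two cases separately and recombine at the end.

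Case 1: $\theta$ is completely periodic on $M_P$. I will show that $\theta$ is completely periodic on $M_Q$ by exhibiting a cylinder decomposition. Write $M_P$ as a finite union of cylinders in direction $\theta$ bounded by a finite collection $\mathcal{S}$ of saddle connections. The preimage $\pi^{-1}(\mathcal{S})$ is a finite union of saddle connections on $M_Q$, since singularities of $M_P$ lift to singularities of $M_Q$. If the branch point lies on $\mathcal{S}$, then pulling back gives a cylinder decomposition of $M_Q$ directly. Otherwise the branch point sits inside some cylinder $C\subset M_P$, and the closed orbit of $C$ through the branch point lifts to $M_Q$; each ramified preimage of the branch point is singular on $M_Q$, so the lifted orbit becomes saddle connections that subdivide $\pi^{-1}(C)$ into cylinders on $M_Q$. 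Either way $\theta$ is completely periodic on $M_Q$.

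Case 2: $\theta$ is uniquely ergodic on $M_P$. Here I appeal to the Smillie--Weiss theorem cited above, which asserts that a translation cover of a lattice surface branched over a single point is strictly ergodic: under the one-point branching hypothesis, unique ergodicity of the flow on $M_P$ in direction $\theta$ propagates to unique ergodicity of the flow on $M_Q$ in the same direction. This is the main obstacle and is genuinely more delicate than Case 1, because $\theta$ may fail to be minimal on $M_Q$ once a singularity-emanating trajectory on $M_P$ in direction $\theta$ happens to meet the branch point and thereby creates a new saddle connection upstairs. The content of Smillie--Weiss is that the one-point branching nonetheless leaves no room for any extra invariant probability measures to appear on $M_Q$.

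Combining the two cases, every direction on $M_Q$ is either completely periodic or uniquely ergodic, which is Veech's dichotomy. For the ``moreover'' clause I appeal directly to Proposition \ref{commensurable}: since $\Gamma_{M_P}$ is a lattice and the branching of $\pi$ is over a single non-periodic point, that proposition says exactly that $\Gamma_{M_Q}$ is not a lattice, so $M_Q$ is not a lattice surface.
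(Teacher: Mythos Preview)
Your proof is correct and rests on the same three ingredients as the paper's: the fact that completely periodic directions on $M_P$ lift to completely periodic directions on $M_Q$, the Smillie--Weiss result for covers branched over a single point, and Proposition~\ref{commensurable} for the non-lattice conclusion. The difference is purely organizational. You case-split on the behavior of $\theta$ downstairs on $M_P$ (via Veech's theorem applied to the lattice surface $M_P$) and treat each case directly; the paper instead case-splits on whether $\theta$ is minimal upstairs on $M_Q$, invokes Theorem~\ref{ZK} in the non-minimal case to produce a saddle connection on $M_Q$, and then classifies that saddle connection by how it projects to $M_P$ (to a saddle connection, to a closed loop through $p$, or to a segment joining $p$ to a singularity). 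Your route is more streamlined; the paper's route has the expository advantage of making explicit which new saddle connections can appear on $M_Q$ when $p$ is not a connection point and why they do not spoil the dichotomy --- this is exactly what underlies the Remark following the proof about the stronger formulation available when $p$ is a connection point.
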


\begin{proof}
We need to show that any direction in $M_Q$ is either completely periodic or uniquely ergodic. According to \cite{SW}, provided the branching is over a single point, the minimal directions are uniquely ergodic. Therefore, we need to prove that the remaining directions are either completely periodic or uniquely ergodic.

According to Theorem \ref{ZK}, if a direction in $M_Q$ is not minimal, then there is a saddle connection in this direction. There are three types of saddle connections on $M_Q$:
\begin{enumerate}
\item Those that project to a saddle connection on $M_P$.
\item Those that project to a geodesic segment connecting $p$ to itself.
\item Those that project to a geodesic segment connecting $p$ to a singularity.
\end{enumerate}

Notice that any completely periodic direction in $M_P$ is a completely periodic direction in $M_Q$. Since $M_P$ is a lattice surface, the first category of directions is completely periodic directions. The second category is obviously periodic directions in $M_P$. For the third category we have 2 possibilities: If it projects to a saddle connection on $M_P$, then it is also a periodic direction. Else, the direction must be minimal in $M_P$, hence uniquely ergodic in $M_Q$.

If the covering map $\pi:M_Q \to M_P$ is branched over a non-periodic point, then Proposition \ref{commensurable} 
implies that the polygon $Q$ does not have the lattice property.
\end{proof}

\begin{Remark}
In case $p$ is a connection point, the Veech dichotomy on $M_Q$ permits a strong formulation: $M_Q$ satisfies strict ergodicity and topological dichotomy. In particular, the completely periodic directions are precisely the saddle connection directions.
\end{Remark}

\begin{cor}
An appropriate cover gives rise to a billiard polygon satisfying Veech's dichotomy without the lattice property.
\end{cor}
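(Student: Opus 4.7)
The plan is to observe that this corollary is essentially a restatement of Proposition \ref{dichotomy} in the language of billiard polygons, so the proof amounts to unwinding the definitions. First I would note that by the definition of an appropriate cover, the hypotheses of Proposition \ref{dichotomy} are satisfied: $P$ is a lattice polygon, $P$ tiles $Q$ by reflections, and the cover $\pi: M_Q \to M_P$ is branched over a single point which, moreover, is non-periodic.

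Next I would apply Proposition \ref{dichotomy} directly. The first conclusion of that proposition gives that every direction on $M_Q$ is either completely periodic or uniquely ergodic, i.e. $M_Q$ satisfies Veech's dichotomy. The second conclusion, using the non-periodicity of the branch point together with Proposition \ref{commensurable}, gives that $M_Q$ is not a lattice surface, which by the definition given after Proposition \ref{commensurable} means precisely that $Q$ does not have the lattice property.

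Finally I would invoke the unfolding construction of $\S 3.2.2$: since $Q$ is a rational polygon (being tiled by the rational polygon $P$), the translation surface $M_Q$ is the flat structure obtained from the billiard in $Q$. Thus $Q$ itself is the desired billiard polygon whose associated flat structure satisfies Veech's dichotomy but whose Veech group fails to be a lattice.

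There is no substantive obstacle here: the work has been done in Proposition \ref{dichotomy} and in the background material recalled from \cite{SW} and \cite{Vor96}, and the corollary only packages these facts into a statement about billiard tables. The only point worth being careful about is making explicit that the billiard polygon in question is $Q$ (not $P$), and that the rationality of $Q$ needed to invoke the unfolding construction follows from $P$ being rational together with $P$ tiling $Q$ by reflections.
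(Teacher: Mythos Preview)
Your proposal is correct and matches the paper's approach exactly: the corollary is stated without proof in the paper because it is immediate from Proposition~\ref{dichotomy} and the definition of an appropriate cover, precisely as you have unwound it. The only minor difference is that you make explicit the rationality of $Q$ and the identification of $M_Q$ with the unfolding of the billiard in $Q$, which the paper leaves implicit.
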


\subsection{List of all known lattice surfaces coming from polygons}

For the moment there is no classification of lattice polygons.
The following list contains the known lattice polygons:

\begin{enumerate}
\item Regular polygons \cite{Veech89}.
\item Right triangles with angles $\left(\frac{\pi}{2}, \frac{\pi}{n}, \frac{(n-2)\pi}{2n} \right)$ for $n\geq 4$ \quad \cite{Veech89}, \cite{Vor96}, \cite{KeSm}.
\item Acute isoceles triangles with angles $\left(\frac{(n-1)\pi}{2n}, \frac{(n-1)\pi}{2n}, \frac{\pi}{n}\right)$ for $n\geq 3$ \ \cite{Veech89}, \cite{Vor96}, \cite{GJ00}, \cite{KeSm}.
\item Obtuse isosceles triangles with angles $\left(\frac{\pi}{n}, \frac{\pi}{n}, \frac{(n-2)\pi}{n}\right)$ for $n\geq 5$ \cite{Veech89}.
\item Acute scalene triangles 
		$\left(\frac{\pi}{4}, \frac{\pi}{3}, \frac{5\pi}{12}\right)$,
		$\left(\frac{\pi}{5}, \frac{\pi}{3}, \frac{7\pi}{15}\right)$ and
		$\left(\frac{2\pi}{9}, \frac{\pi}{3}, \frac{4\pi}{9}\right)$
		\quad \cite{Veech89}, \cite{Vor96}, \cite{KeSm} respectively.
\item Obtuse triangles with angles $\left(\frac{\pi}{2n}, \frac{\pi}{n}, \frac{(2n-3)\pi}{2n}\right)$ for $n\geq 4$ \cite{Vor96}, \cite{Wrd98}.
\item Obtuse triangle with angles $\left(\frac{\pi}{12}, \frac{\pi}{3}, \frac{7\pi}{12}\right)$ \cite{Ho}.
\item L-shaped polygons (See \cite{Mc spin} for a description).
\item Bouw and M\"{o}ller examples: (See \cite{BM} for a description)
		\begin{itemize}
			\item 4-gon with angles $\left(\frac{\pi}{n},\frac{\pi}{n},\frac{\pi}{2n},\frac{(4n-5)\pi}{2n}\right)$ for $n\geq 7$ and odd.
			\item 4-gon with angles $\left(\frac{\pi}{2},\frac{\pi}{n},\frac{\pi}{n},\frac{(3n-4)\pi}{2n}\right)$ for $n\geq 5$ and odd.
		\end{itemize}
\item Square-tiled polygons \cite{GJ96}.
\end{enumerate}

\begin{remark}
\label{lattice_cover}
According to Proposition \ref{commensurable}, if $\overline{P}$ is a polygon which is tiled by one of the polygons $P$ in the list, such 
that the covering map $\pi: M_{\overline{P}} \to M_{P}$ is branched only over periodic points, then $\overline{P}$ has the lattice property.
\end{remark} 

\begin{thm}
\label{main}
There is no appropriate cover $\pi: M_Q \to M_P$ 
with P in the list.
\end{thm}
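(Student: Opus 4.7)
The plan is to establish the theorem by a case-by-case analysis through the ten families of lattice polygons listed in \S4.1. The starting reductions are structural: by Remark \ref{observation}(1) every branch point of a reflection cover $\pi:M_Q\to M_P$ is a vertex class of $P$, and by Remark \ref{gluing} a vertex with angle $\tfrac{m_0\pi}{n_0}$ and $m_0\geq 2$ yields a singular point of $M_P$ and is therefore automatically periodic. Hence any candidate single non-periodic branch point must come from a \emph{regular} vertex of $P$, i.e.\ one with angle $\tfrac{\pi}{n_0}$, and the task reduces to ruling this out for every $P$ in the list.

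For each $P$ I would first enumerate the polygons $Q$ admitting a reflection tiling by $P$. The enumeration uses the constraints that each vertex angle of $Q$ is a positive integer multiple $k\alpha$ of some angle $\alpha$ of $P$, that interior meeting points of the tiling have angles summing to $2\pi$, and that non-vertex boundary meeting points sum to $\pi$. Applied to each candidate $Q$, Proposition \ref{branching} yields the branch locus vertex class by vertex class; a single branch point appears only in the very restrictive situation where every vertex class of $P$ other than the candidate $v$ has all its preimages in $Q$ satisfying $k_i\mid n_{v'}$.

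Whenever such a candidate $v$ is isolated, I would show that $v$ is periodic in $M_P$. The vertex $v$ is already fixed by the dihedral subgroup of $G_P\subseteq\operatorname{Aff}(M_P)$ generated by the two reflections across the sides of $P$ meeting at $v$. Supplemented by the elliptic elements and involutions of the Veech group $\Gamma_{M_P}$ recorded in the cited references---and noting that Corollary \ref{-id} supplies $-\mathrm{Id}\in G_P$ for every polygon in the list containing an even angle, which covers Veech's right triangles, L-shapes, square-tiled polygons, and the Bouw--M\"oller 4-gons---this suffices to force $\operatorname{Stab}_{\operatorname{Aff}(M_P)}(v)$ to have finite index, hence a finite orbit for $v$.

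The main obstacle will be the two genuinely infinite families: L-shaped polygons and square-tiled polygons. For these I would argue structurally rather than by enumeration, using the hyperelliptic involution on the relevant stratum (for L-shapes of genus $2$) and the commensurability of the Veech group with $SL_2(\Z)$ (for square-tiled polygons, via \cite{GJ00}) to guarantee that every vertex of the tiling has finite affine orbit. The remaining eight families admit only finitely many candidate $Q$'s up to similarity, so a direct inspection of each, together with the periodicity verification above, completes the argument.
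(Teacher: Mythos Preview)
Your proposal has a genuine gap that would make the argument fail in the hardest cases. You write that whenever a candidate vertex $v$ is isolated as the unique branch point, you would ``show that $v$ is periodic in $M_P$'' using the dihedral stabilizer at $v$ together with $-\mathrm{Id}$ and other elliptic elements of the Veech group. But this is precisely what is \emph{false} in cases 2(b), 5(a)--(c), and 6(b): the paper's Lemma \ref{non-periodic_points} establishes by explicit height-ratio computations that the relevant regular vertices (e.g.\ the centers of the double regular $n$-gon for odd $n$, or the vertex of angle $\tfrac{\pi}{3}$ in the $\left(\tfrac{\pi}{4},\tfrac{\pi}{3},\tfrac{5\pi}{12}\right)$ triangle) are \emph{non-periodic}. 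For instance, in case 2(b) with $n$ odd one has $-\mathrm{Id}\in G_P$ (since $N_P=2n$ is even), but the rotation by $\pi$ \emph{swaps} the two centers rather than fixing either one, so Lemma \ref{involution} does not apply and your periodicity argument collapses.

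The paper's actual strategy in these cases is entirely different and is the real content of the theorem: it is a combinatorial construction argument on $Q$ rather than a periodicity argument on $v$. One fixes the non-periodic vertex $v$ and attempts to build $Q$ tile by tile, using Lemma \ref{fp_of_G_Q} (the branch point must be fixed by $G_Q$, which forces certain sides to be internal), Corollary \ref{not_appropriate} (an even angle in $Q$ forces $-\mathrm{Id}\in G_Q$ and hence periodicity of the branch point), and Proposition \ref{branching} (controlling which vertex multiplicities produce branching). The outcome in every case is that the constraints force either a second branch point, or an even angle in $Q$, or that $Q$ is itself a lattice polygon. This step-by-step tiling analysis, carried out uniformly in $n$ for the infinite families, is what your proposal is missing. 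Relatedly, your claim that ``the remaining eight families admit only finitely many candidate $Q$'s up to similarity'' is not correct: families 2, 3, 4, 6, 9 are themselves infinite in $n$, and the paper handles them by arguments valid for all $n$. Finally, the paper also treats a ``second class'' of covers $M_Q\to M_{\overline P}\to M_P$ with $\overline P$ an intermediate lattice polygon (Remark \ref{lattice_cover}), which your outline does not address.
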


In order to prove this theorem, we will follow the list in $\S 4.1$ and show in each case that there is no appropriate cover. For each polygon $P_i$ in the list we will check two kinds of appropriate covers. These two cases will be referred to as appropriate covers of the first and second class respectively. 

\begin{enumerate}
\item $\pi:M_Q \to M_{P_i}$ where $P_i$ is one of the polygons in the list above.
\item $\pi:M_Q \to M_{\overline{P}}$ where $M_{\overline{P}}$ covers $M_{P_i}$ as in Remark \ref{lattice_cover}.
\end{enumerate}
Note that if all the points in the surface $M_P$ corresponding to the vertices in $P$ are periodic, we could not find an appropriate cover of neither kind.

\newpage
\section{Preliminary Lemmas}

\begin{lem}
\label{fp_of_G_Q}
Let $M_P$ and $M_Q$ be translation surfaces as in the construction in $\S 3.5.2$, such that $\pi :M_Q \to M_P$ is a branched covering map, where the branch locus is a single point $z_0 \in M_P$. Then $z_0$ is a fixed point of $G_Q$.
\end{lem}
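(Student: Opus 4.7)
My plan is to exploit the equivariance of the covering $\pi:M_Q\to M_P$ with respect to the natural action of $G_Q$ on both surfaces.

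First, recall from Remark \ref{observation} that $G_Q$ is a subgroup of $G_P$, so each $g\in G_Q$ acts both on $M_Q$ (via its construction as a union of copies $g_k Q$ indexed by $G_Q$) and on $M_P$ (via its construction as a union of copies $g_j P$ indexed by $G_P$). The key first step is to verify that $\pi$ intertwines these two actions, i.e.\ that $\pi\circ g=g\circ\pi$ for every $g\in G_Q$. This should follow directly by unwinding the construction in $\S 3.5.2$: a $P$-tile inside the copy $Q_k=g_k Q$ of $M_Q$ has the form $g_k t P$ for some $t\in G_P$ coming from the tiling of $Q$ by $P$, and is sent by $\pi$ to the $P$-tile $P_{g_k t}$ in $M_P$; applying $g\in G_Q$ first and then $\pi$ gives $P_{g g_k t}$, which is the same as applying $\pi$ first and then $g$. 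Hence $\pi$ is $G_Q$-equivariant.

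Once equivariance is in hand, the rest is immediate. For any $g\in G_Q$, $g$ is a homeomorphism of $M_Q$ which is a translation in local charts away from singularities, so it preserves ramification indices; combined with equivariance, $g$ permutes the fibers of $\pi$ and maps ramification points to ramification points of the same index. Consequently $g$ permutes the branch locus $\pi(\{\text{ramification points}\})\subset M_P$. By assumption this branch locus is the single point $\{z_0\}$, so we are forced to conclude $g(z_0)=z_0$ for all $g\in G_Q$, proving the lemma.

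The only non-routine step is making the equivariance of $\pi$ precise, since the construction of $M_P$ and $M_Q$ is given only informally in the paper; everything after that is a one-line consequence. I do not expect any genuine obstacle here, as the covering map is literally defined tile by tile in a way that is compatible with the $G_P$-action, and the $G_Q$-action is just the restriction.
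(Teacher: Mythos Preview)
Your argument is correct and follows essentially the same route as the paper: use $G_Q\le G_P$ to get the $G_Q$-action on both surfaces, observe the commutative diagram $\pi\circ g=g\circ\pi$, and conclude that the branch locus is $G_Q$-invariant and hence a single branch point must be fixed. The paper simply asserts the equivariance via the diagram, whereas you spell out the tile-by-tile justification; otherwise the proofs coincide.
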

\begin{proof}
As we mentioned in $\S 3.3$, $G_Q \subseteq \text{Aff}(M_Q)$. Since $G_Q < G_P$ we have $G_Q \subseteq \text{Aff}(M_P)$. Then, for all $g \in G_Q$, we have the following diagram:
\begin{equation*}
\begin{matrix}
\ M_Q & \xrightarrow{\quad g \quad} & M_Q \\
\pi\Big\downarrow & \ & \Big\downarrow\pi \\
\ M_P & \xrightarrow[\quad g \quad]{} & M_P
\end{matrix}
\end{equation*}
Hence, the set of branch points in $M_P$ is $G_Q$-invariant. Therefore, if $z_0\in M_P$ is a single branch point, it is a fixed point of $G_Q$.
\end{proof}

\begin{remark}
\label{remark_fp_of_G_Q}
In fact, if we denote $H=\{\ h\in G_P \ | \ hM_Q=M_Q \}$, then the branch point must be a fixed point of the group $\langle G_Q, H\rangle$. 
Moreover, if we want to branch over a single point $z_0 \in M_P$, which is not a fixed point of $h \in G_P$, then $h \notin G_Q$.
\end{remark}

\begin{lem}
\label{finite_fixed_points}
Let $\varphi \in \text{Aff}(M)$ such that $D\varphi \in SL_2^\pm(\R)$ is elliptic or hyperbolic. 
Then $\varphi$ has only a finite number of fixed points.
\end{lem}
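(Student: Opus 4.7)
The plan is to reduce finiteness of $\mathrm{Fix}(\varphi)$ to showing that each fixed point is isolated: since $\mathrm{Fix}(\varphi)$ is the preimage of the diagonal under $p\mapsto (p,\varphi(p))$, it is closed in the compact surface $M$, and any closed subset of a compact space consisting only of isolated points is necessarily finite. So the whole content of the lemma is local isolation.

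The key algebraic input is that, for $A=D\varphi$ either elliptic or hyperbolic in $SL_2^\pm(\R)$, the number $1$ is not an eigenvalue of $A$. Indeed, if $\det A=1$ then $A$ hyperbolic means eigenvalues $\lambda,\lambda^{-1}$ with $|\lambda|\neq 1$, and $A$ elliptic means complex conjugate eigenvalues on the unit circle which are not real; if $\det A=-1$ then $A$ hyperbolic has real eigenvalues $\lambda,-\lambda^{-1}$ with $|\lambda|\neq 1$. In every case the matrix $I-A$ is invertible.

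Now I would split the argument into two cases according to whether a fixed point $p$ is singular or regular. Singular fixed points are automatically finite in number, because $\varphi\in\mathrm{Aff}(M)$ carries $\Sigma_M$ to itself and $\Sigma_M$ is finite. For a regular fixed point $p$, the translation atlas gives a flat chart on a neighborhood $U$ of $p$, centered at $p$, and after shrinking $U$ to some $U'$ with $\varphi(U')\subset U$ one may use the same chart on source and target; in these coordinates $\varphi$ has the form
\begin{equation*}
\varphi(x)=Ax+t.
\end{equation*}
The hypothesis $\varphi(p)=p$ forces $t=0$, so any further fixed point $x\in U'$ must satisfy $(I-A)x=0$. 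By the preceding paragraph $I-A$ is invertible, hence $x=0$, i.e.\ $p$ is isolated in $\mathrm{Fix}(\varphi)$.

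There is no real obstacle here; the only thing to be careful about is the chart manipulation at the fixed point (making sure we may pull source and target into one and the same flat chart), and the eigenvalue check in the $\det=-1$ case of $SL_2^\pm(\R)$, which has to be treated separately from the classical $SL_2(\R)$ discussion of elliptic/hyperbolic types.
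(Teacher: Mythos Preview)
Your proof is correct and, in fact, cleaner than the paper's. Both arguments rest on the same algebraic fact, namely that for $A=D\varphi$ elliptic or hyperbolic the linear map $I-A$ is invertible (equivalently, $1$ is not an eigenvalue of $A$), but they reach it by different routes. You argue locally: at a regular fixed point you pass to a single flat chart, read $\varphi$ as $x\mapsto Ax$, and conclude that the fixed point is isolated; closedness of $\mathrm{Fix}(\varphi)$ and compactness of $M$ then finish the job. The paper argues by contradiction and accumulation: it extracts two nearby fixed points, joins them by a short geodesic $\gamma$, and uses a Lipschitz bound together with uniqueness of short geodesics to show $\varphi(\gamma)=\gamma$; the direction vector $v$ of $\gamma$ then satisfies $Av=v$, contradicting the hypothesis on $A$. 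Your route avoids the metric apparatus (Lipschitz constants, injectivity radius, uniqueness of geodesics) and is the more standard ``affine map without eigenvalue $1$ has isolated fixed points'' argument. One small caveat that applies equally to both proofs: in the $\det A=-1$ case, a reflection has eigenvalues $\{1,-1\}$ and infinitely many fixed points, so one must be reading ``elliptic or hyperbolic'' so as to exclude this case; your remark about treating the $\det=-1$ case separately is well taken.
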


\begin{proof}
Suppose by contradiction that $\varphi$ has an infinite number of 
fixed points in $M$. Since $M$ is compact, there exists a Cauchy 
sequence of fixed points: 
$\{x_n\}_{n=1}^{\infty}\subset M\setminus \Sigma_M$, with 
$\varphi(x_n)=x_n$ for all $n$. In particular, for all $\delta>0$, 
there exist $n, m \in \N$ such that $d(x_n,x_m)<\delta$.

M is compact, therefore we can cover $M \setminus \Sigma_M$ with 
finitely may sets $\{U_i\}_{i=1}^k$ such that, for any $x, y \in U_i$, 
there exists a geodesic from x to y. Let $r$ be the distance such that if 
$x,y \in M$ with $d(x,y)<r$, then there exists one geodesic on $M$ 
connecting between $x$ and $y$ of length less than $r$. The 
compactness of $M$ implies that $r$ can be chosen uniformly. 
Let $K$ be a Lipschitz constant of $\varphi$, and 
$A=D\varphi \in SL_2^{\pm}(\R)$. 

Define $\delta_0= \frac{r}{K}$. There exist $x_{n_0}, x_{m_0} \in U_{i_0}$ 
such that $d(x_{n_0},x_{m_0})<\delta_0$. 
Let $\gamma: [0,1] \to M$ be the geodesic such that 
$\gamma(0)=x_{n_0}$ and $\gamma(1)=x_{m_0}$.
There exists a map ($U_{i_0}, \psi_{i_0}$) such that 
$\gamma \subset U_{i_0}$ and 
%$B_{\delta_0}(x_{n_0}) \subset V_\alpha$, and 
$\psi_{i_0}(\gamma)=v \in \R^2$. 
\pagebreak

Consider $\varphi(\gamma)$: 
\begin{itemize}
\item [-] $\varphi(\gamma(0))=\varphi(x_{n_0})=x_{n_0}=\gamma(0)$
\item [-] $\varphi(\gamma(1))=\varphi(x_{m_0})=x_{m_0}=\gamma(1)$
\item [-] $\forall t_1, t_2 \in [0,1]$: \ 
		$d\left(\varphi(\gamma(t_1), \varphi(\gamma(t_2)\right) \leq 
			K \cdot d\left(\gamma(t_1), \gamma(t_2)\right) \leq
			K \cdot d(x_{n_0}, x_{m_0}) < K \cdot \delta_0 < r $, 
\end{itemize}
This implies that $\varphi(\gamma)=\gamma$. Since $\varphi \in \text{Aff}(M)$
with $A=D\varphi$ we get $\psi_\alpha(\varphi(\gamma))=A \cdot \psi_\alpha(\gamma)$.
Hence, we have: 
\[v=\psi_\alpha(\gamma)=\psi_\alpha(\varphi(\gamma))=A \cdot \psi_\alpha(\gamma)=A\cdot v\]
Consequently, $v$ is eigenvector of $A$. This is possible only if $A$ is 
parabolic. \newline A contradiction.
\end{proof}

\begin{lem}
\label{involution}
Let $M$ be a translation surface and $z_0 \in M$, and let $\sigma \in \text{Aff}(M)$ such that $\sigma(z_0)=z_0$
and $D(\sigma)=-Id\in \Gamma_M$. Then $z_0$ is periodic.
\end{lem}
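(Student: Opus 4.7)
The strategy is to exhibit every point in the $\text{Aff}(M)$-orbit of $z_0$ as a fixed point of some element of a \emph{finite} family of conjugates of $\sigma$, and then to invoke Lemma \ref{finite_fixed_points} to bound the fixed-point set of each such conjugate. Since a periodic point is by definition a point with finite $\text{Aff}(M)$-orbit, this will finish the proof.

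\textbf{Step 1: Conjugating $\sigma$.} For an arbitrary $\varphi \in \text{Aff}(M)$, set $\sigma_\varphi := \varphi \sigma \varphi^{-1}$. Then $\sigma_\varphi \in \text{Aff}(M)$ with derivative
\[
D(\sigma_\varphi) \;=\; D(\varphi)\,(-Id)\,D(\varphi)^{-1} \;=\; -Id,
\]
and because $\sigma(z_0)=z_0$ one has $\sigma_\varphi(\varphi(z_0)) = \varphi(\sigma(z_0)) = \varphi(z_0)$. Hence $\varphi(z_0) \in \mathrm{Fix}(\sigma_\varphi)$, and therefore $\text{Aff}(M)\cdot z_0 \subseteq \bigcup_{\varphi} \mathrm{Fix}(\sigma_\varphi)$.

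\textbf{Step 2: Finitely many conjugates.} Since $D(\sigma_\varphi \sigma^{-1}) = (-Id)(-Id) = Id$, the element $\sigma_\varphi \sigma^{-1}$ lies in $\mathrm{Trans}(M) = \ker D$. By the exact sequence recalled in $\S 3.3$, $\mathrm{Trans}(M)$ is finite. Consequently, as $\varphi$ varies over $\text{Aff}(M)$, the element $\sigma_\varphi$ takes only finitely many values, say $\tau_1,\dots,\tau_N$. Lemma \ref{finite_fixed_points} then applies to each $\tau_j$ (its derivative is $-Id$, which is elliptic — of order $2$), so each $\mathrm{Fix}(\tau_j)$ is finite. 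Hence $\text{Aff}(M)\cdot z_0 \subseteq \bigcup_{j=1}^N \mathrm{Fix}(\tau_j)$ is finite, and $z_0$ is periodic.

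The only point requiring care is that Lemma \ref{finite_fixed_points} really is applicable to the derivative $-Id$, since in the trace-based classification $-Id$ is a borderline element with trace $-2$. However, the proof of Lemma \ref{finite_fixed_points} produces a nonzero tangent vector $v$ with $D(\sigma)\,v = v$; when $D(\sigma) = -Id$, this forces $v = -v = 0$, contradicting $v\neq 0$. So the lemma genuinely yields finite fixed-point sets in our setting, and this is the only subtlety; the remainder of the argument is a direct bookkeeping consequence of the finiteness of $\mathrm{Trans}(M)$.
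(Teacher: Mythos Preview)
Your proof is correct and follows essentially the same route as the paper: both arguments show that every $\varphi(z_0)$ is fixed by some element of the finite coset $\text{Trans}(M)\cdot\sigma$ (the paper writes this as $\gamma_0\sigma$ with $\gamma_0\in\ker D$, you write it as the conjugate $\varphi\sigma\varphi^{-1}$, and these coincide), and then invoke Lemma~\ref{finite_fixed_points} on each such element. Your added paragraph verifying that Lemma~\ref{finite_fixed_points} genuinely applies to derivative $-Id$ is a welcome clarification that the paper leaves implicit.
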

\begin{proof}
Denote $A=\{z \in M \ | \ \exists \gamma \in \text{Trans}(M) \ \text{such that}\ \gamma\sigma z=z\}$. Trans(M) is finite and $D(\gamma\sigma)=D(\sigma)=-Id$, hence by Lemma \ref{finite_fixed_points} $A$ is finite. Let $\psi \in \text{Aff}(M)$, we will show that $\psi(z_0) \in A$, and since $A$ is finite, the claim is obtained. Since $D(\sigma)=-Id$, we get $D(\psi\sigma)=D(\psi)D(\sigma)=D(\sigma)D(\psi)=D(\sigma\psi)$. Therefore there exists $\gamma_0 \in ker(D)$ such that $\psi\sigma=\gamma_0\sigma\psi$. Now, $\psi(z_0)=\psi(\sigma(z_0))=\gamma_0\sigma\psi(z_0)$ which implies that $\psi(z_0) \in A$.
\end{proof}

\begin{Remark}
Lemma \ref{involution} improves Theorem 10 of \cite{GHS}.
Similar arguments show that the set of all the fixed points 
of all maps in $\text{Aff}(M)$ with derivative $-Id$ is 
$\text{Aff}(M)$-invariant. This set contains the set of 
Weierstrass points. Therefore, Theorem 10 of \cite{GHS}
is obtained without the assumption that $\text{Aff}(M)$ 
is generated by elliptic elements.
\end{Remark}

The next corollary immediately follows from the previous lemmas.

\begin{cor}
\label{-id_in_G_Q}
Let $M_P$ and $M_Q$ be translation surfaces as in the construction in $\S 3.5.2$, such that $\pi :M_Q \to M_P$ is a branched covering map, where the branch locus is a single point $z_0 \in M_P$. If $-Id \in G_Q$, then $z_0$ is a periodic point.
\end{cor}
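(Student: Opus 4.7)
The proof should be a short combination of the two preceding lemmas. My plan is as follows.

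First, I would invoke Lemma \ref{fp_of_G_Q}. Since by hypothesis the branched cover $\pi:M_Q\to M_P$ has branch locus consisting of the single point $z_0\in M_P$, that lemma says $z_0$ is a fixed point of every element of $G_Q$ acting on $M_P$. In particular, since $-Id\in G_Q$, there is an element $\sigma\in G_Q\subseteq G_P\subseteq\text{Aff}(M_P)$ with $D\sigma=-Id$ and $\sigma(z_0)=z_0$.

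Next, I would apply Lemma \ref{involution} to the affine automorphism $\sigma$ of $M_P$: the hypotheses of that lemma (existence of $\sigma\in\text{Aff}(M_P)$ with $D\sigma=-Id$ and $\sigma(z_0)=z_0$) are exactly what was just established, so the conclusion is that $z_0$ is a periodic point of $M_P$. This is what Corollary \ref{-id_in_G_Q} asserts.

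The only potential subtlety worth double-checking is the identification of $G_Q$ as a subgroup of $\text{Aff}(M_P)$ as well as of $\text{Aff}(M_Q)$; this is precisely the content of the opening lines of the proof of Lemma \ref{fp_of_G_Q}, where it is noted that $G_Q<G_P$ acts affinely on both surfaces and makes the square with $\pi$ commute. With this in hand, no further argument is needed, so there is no real obstacle: the corollary is an immediate consequence of the two lemmas above.
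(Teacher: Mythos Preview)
Your proposal is correct and matches the paper's approach exactly: the paper simply states that the corollary immediately follows from Lemmas \ref{fp_of_G_Q} and \ref{involution}, which is precisely the two-step argument you spell out.
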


The following corollary is obtained from Corollary \ref{-id} and the last Corollary \ref{-id_in_G_Q}.

\begin{cor}
\label{not_appropriate}
Let $P$ and $Q$ be polygons such that $P$ tiles $Q$ by reflections. If $Q$ has an even angle or if there exist two external sides with even angle between them, then $\pi: M_Q \to M_P$ is not an appropriate cover.
\end{cor}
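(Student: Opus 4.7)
The plan is to argue by contradiction, combining the two preceding corollaries essentially as black boxes, since Corollary \ref{not_appropriate} is a direct synthesis. Assume for contradiction that $\pi: M_Q \to M_P$ is an appropriate cover. By the definition of appropriate, $\pi$ is branched over a single point $z_0 \in M_P$ which is \emph{non-periodic}.

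Next, I would apply Corollary \ref{-id} to the polygon $Q$ rather than to $P$. Note that this is legitimate: $Q$ is a rational polygon, since each of its angles is either an angle of one of the reflected copies of the rational polygon $P$ or a sum of such angles, and its sides are images of sides of $P$ under reflections that permute a finite set of directions. The hypothesis of Corollary \ref{not_appropriate} is precisely the hypothesis of Corollary \ref{-id} applied to $Q$, so we conclude $-Id \in G_Q$.

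Finally, I would feed this into Corollary \ref{-id_in_G_Q}, which was proved for exactly the situation at hand: $\pi: M_Q \to M_P$ is a branched cover with a single branch point $z_0$, and $-Id \in G_Q$. The conclusion is that $z_0$ is a periodic point of $M_P$, directly contradicting the non-periodicity we extracted from the definition of appropriate cover. This contradiction closes the argument.

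There is no real obstacle: all the work has already been done in Corollaries \ref{-id} and \ref{-id_in_G_Q}, the latter of which itself collects Lemmas \ref{fp_of_G_Q}, \ref{finite_fixed_points}, and \ref{involution}. The only point requiring a brief sanity check is that Corollary \ref{-id}, though phrased for an arbitrary rational polygon $P$, applies verbatim to $Q$; once this is noted, the proof is a two-line chain of implications.
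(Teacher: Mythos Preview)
Your proof is correct and follows exactly the route the paper indicates: the corollary is stated as an immediate consequence of Corollary~\ref{-id} (applied to $Q$) together with Corollary~\ref{-id_in_G_Q}. The only addition you make is the brief sanity check that $Q$ is rational so Corollary~\ref{-id} applies, which is fine but not strictly needed here.
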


\begin{lem}
\label{non-periodic_points}
The following points are non-periodic points:
\begin{enumerate}
\item The points corresponding to the angle $\frac{\pi}{n}$ in the surface obtained from the triangle with angles 
		$\left(\frac{\pi}{2}, \frac{\pi}{n}, \frac{(n-2)\pi}{2n} \right)$, $n\geq 5$ and odd.
\item The points corresponding to the angle $\frac{\pi}{3}$ in the surface obtained from the triangle with angles 
		$\left(\frac{\pi}{4}, \frac{\pi}{3}, \frac{5\pi}{12}\right)$.
\item The points corresponding to the angle $\frac{\pi}{3}$ in the surface obtained from the triangle with angles 
		$\left(\frac{2\pi}{9}, \frac{\pi}{3}, \frac{4\pi}{9}\right)$.
\item The points corresponding to the angles $\frac{\pi}{5}$ and $\frac{\pi}{3}$ in the surface obtained from the 
		triangle with angles $\left(\frac{\pi}{5}, \frac{\pi}{3}, \frac{7\pi}{15}\right)$.
\item The points corresponding to the angle $\frac{\pi}{n}$ in the surface obtained from the triangle with angles 
		$\left(\frac{\pi}{2n}, \frac{\pi}{n}, \frac{(2n-3)\pi}{2n}\right)$, $n\geq 5$ and odd.
		\end{enumerate}
\end{lem}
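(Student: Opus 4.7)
The strategy for every case is uniform: exhibit an explicit cylinder decomposition of $M_P$ in some direction, identify the image of the vertex in question inside a cylinder $C$, and show that this point splits the height of $C$ in an irrational ratio. Remark \ref{non-periodic} then yields non-periodicity. Note that in every case in the statement, the angle at the vertex has the form $\pi/n$, so by Remark \ref{gluing} the corresponding point of $M_P$ is regular, which is what makes non-periodicity a meaningful property to establish.

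First I would carry out the unfolding construction of $\S 3.2.2$ to obtain concrete polygonal models for the five surfaces $M_P$. In case (1) the right triangle with angles $\bigl(\pi/2,\pi/n,(n-2)\pi/(2n)\bigr)$ has $N=\operatorname{lcm}(2,n,2n)=2n$, so $M_P$ is assembled from $4n$ copies of $P$; for $n$ odd this surface is the well known ``double regular $n$-gon'' from \cite{Veech89}, obtained by gluing two copies of a regular $n$-gon along corresponding sides. Case (5) is treated similarly, giving another explicitly known Veech surface. For the three scalene examples (2), (3), (4) the integer $N$ equals $12$, $9$, $15$ respectively, producing surfaces tiled by $24$, $18$, $30$ copies of $P$, whose flat geometry can be written down by direct inspection.

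Next, for each $M_P$ I would pick a completely periodic direction in which the vertex of interest lies on a saddle connection either bounding or internal to some cylinder. A natural choice is the direction of a side of $P$ adjacent to that vertex; because such a direction is fixed by a reflection in $G_P$, the Veech group contains a parabolic stabilizing it, so it is automatically periodic. Elementary trigonometry then expresses both the vertical position of the vertex inside the cylinder and the total height of the cylinder as linear combinations, with rational coefficients once the triangle is normalized, of sines and cosines of integer multiples of $\pi/n$ (respectively $\pi/12$, $\pi/9$, $\pi/15$ in the scalene cases). What remains is to show that the resulting ratio is irrational.

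The heart of the argument, and the main obstacle, is precisely this irrationality verification. All the heights lie inside a real cyclotomic field $K_n$, namely the maximal totally real subfield of $\Q(\zeta_{4n})$ (or of $\Q(\zeta_{60})$ in case (4)), and for the range of $n$ appearing in the lemma one has $[K_n:\Q]\geq 2$. To prove that a specific ratio of two elements of $K_n$ is irrational it then suffices to exhibit a Galois automorphism of $K_n$ that does not fix it. For the scalene cases (2)--(4) this is a finite check, with case (4) requiring two separate computations for the vertices $\pi/5$ and $\pi/3$, but both carried out inside the same field. The real difficulty lies in cases (1) and (5), where the cylinder decomposition must be chosen so that the irrationality argument is uniform in $n$; in case (1) the highly symmetric structure of the double $n$-gon in the direction of any of its sides makes this tractable, and I expect an analogous symmetric decomposition to handle the family in case (5).
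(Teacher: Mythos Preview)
Your proposal is correct and follows essentially the same route as the paper: in each case one exhibits a cylinder decomposition of $M_P$, computes the position of the regular vertex inside a cylinder, and invokes Remark \ref{non-periodic} once the height ratio is shown to be irrational.

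A few minor differences are worth noting. For case (1) the paper does not carry out a uniform computation but simply cites \cite{HS covering}, Proposition 3, where the result is already established for the double regular $n$-gon; case (5) is then disposed of in one line by observing that the points in question are again the centres of regular $n$-gons, so the same argument applies. For the scalene cases (2)--(4) the paper works in the horizontal direction and verifies irrationality by entirely elementary means rather than via Galois automorphisms: in case (2) the ratio simplifies directly to $1/\sqrt{3}$; in case (3) one shows that the relevant height is a root of the integer cubic $8x^3-18x+9$ and applies the rational root test; in case (4) the two ratios simplify to explicit algebraic expressions visibly outside $\Q$. Your Galois-theoretic suggestion would of course also work and is arguably cleaner conceptually, but the paper's ad hoc computations avoid any appeal to field theory.
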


\begin{figure}
	\includegraphics[scale=0.73]{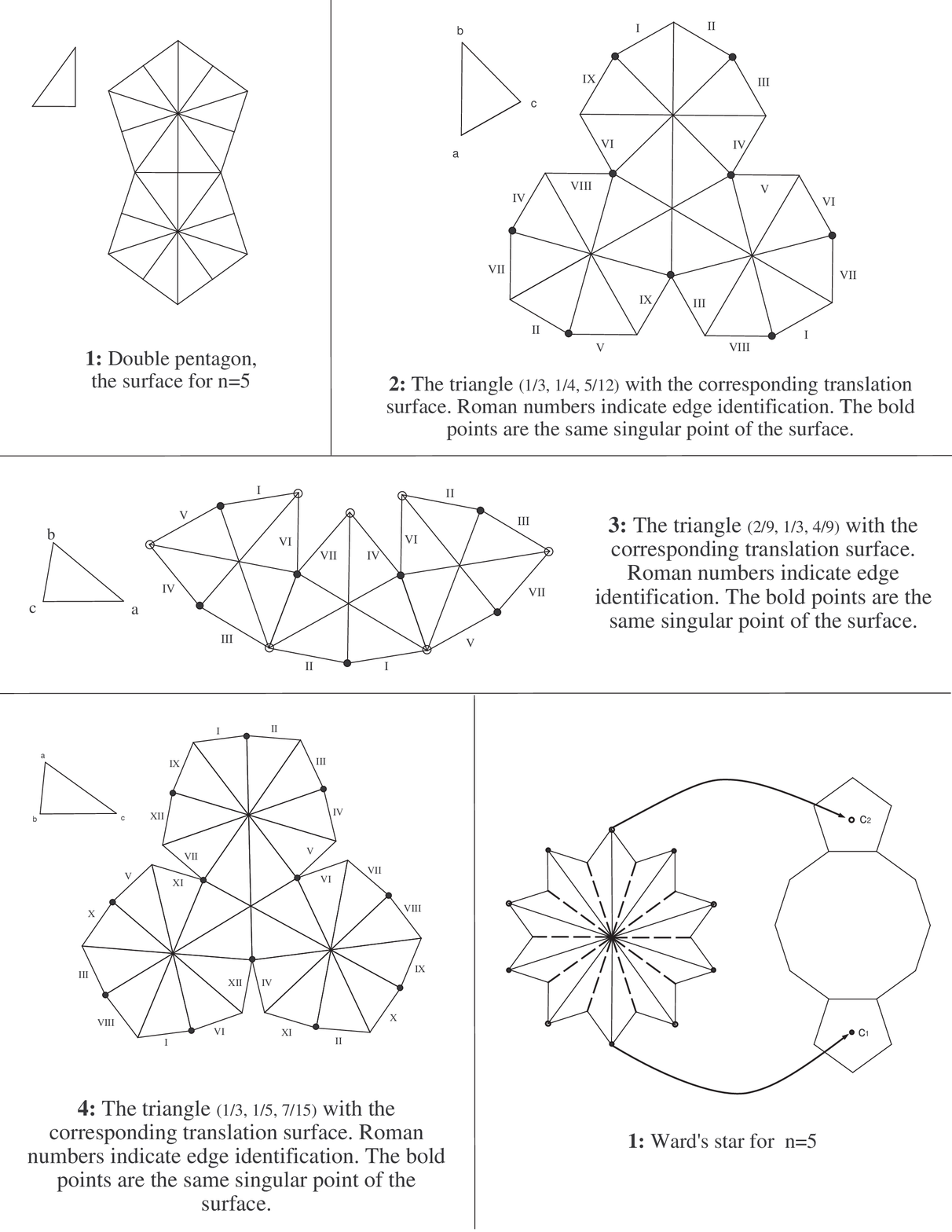}
	\caption{The surfaces in Lemma \ref{non-periodic_points}}
  	\label{surfaces_lemma}
\end{figure}

\begin{proof}
According to Remark \ref{non-periodic}, we will show that these points irrationally split the height of a cylinder of the surface. If there are two points in $M_P$ corresponding to the same angle in $P$, then both are periodic or both are non-periodic (since $G_P \subset \text{Aff}(M_P)$ swaps these points). Therefore, it is sufficient to verify this condition for only one of the points.
\begin{enumerate}
\item The surface $M_P$ obtained from the triangle with angles $\left(\frac{\pi}{2}, \frac{\pi}{n}, \frac{(n-2)\pi}{2n} \right)$ $n\geq 5$ and 
	odd,is the double regular n-gon with parallel sides identified (see Figure \ref{surfaces_lemma}, surface 1). The points corresponding to 
	the angle $\frac{\pi}{n}$ are non-periodic. This result is not new (see\cite{HS covering}, Proposition 3).
\item Denote the vertices of the triangle $P$ with angles $\frac{\pi}{3}$, $\frac{\pi}{4}$ and $\frac{5\pi}{12}$ by $a$, $b$ and $c$ 
	respectively. $M_P$ is a surface of genus 3 with one singular point which corresponds to the vertex $c$ in $P$ (see Figure 
	\ref{surfaces_lemma}, surface 2).

	In order to calculate the height ratio we will look at the cylinder decomposition of the surface in the horizontal direction, 
	and normalize one of the triangle sides to unity, as described in Figure \ref{kesm12_periodic}.
		\begin{figure}[h!]
		\begin{center}
		\includegraphics[scale=0.52]{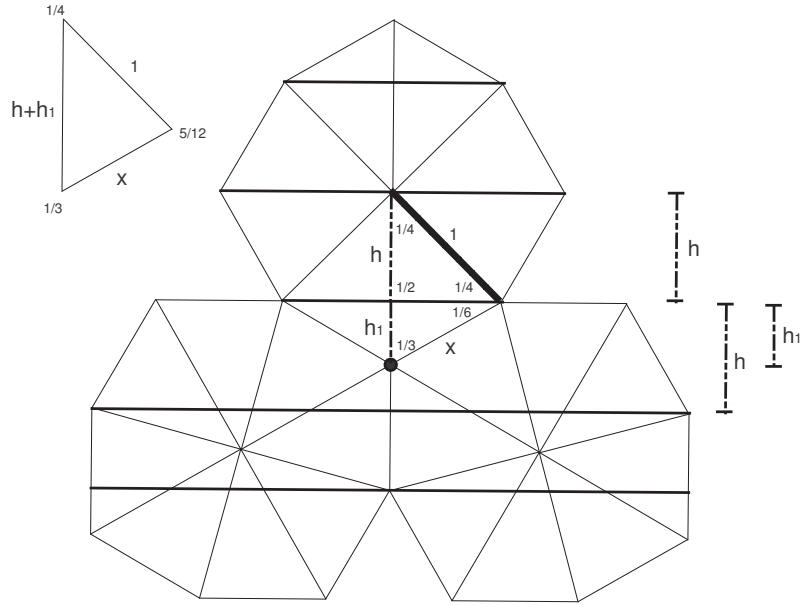}
		\caption{Cylinder decomposition in the horizontal direction of 
			the surface $M_P$ obtained from $P=\left(\frac{\pi}{4}, \frac{\pi}{3}, \frac{5\pi}{12}\right)$ -
			calculations for the point corresponding to $\frac{\pi}{3}$.}
		\label{kesm12_periodic}
		\end{center}
		\end{figure}

		The following calculation were made:
		\[h=\sin(\frac{\pi}{4})=\frac{1}{\sqrt{2}} \quad ; \quad 
		\frac{x}{\sin(\frac{\pi}{4})}=\frac{1}{\sin(\frac{\pi}{3})} \quad
			\Longrightarrow \quad x=\frac{\sqrt{2}}{\sqrt{3}}\]
		\[\frac{h_1}{\sin(\frac{\pi}{6})}=x \quad \Longrightarrow	\quad
			h_1=x\cdot \sin(\frac{\pi}{6})=\frac{\sqrt{2}}{\sqrt{3}}\cdot\frac{1}{2}\]
		Consequently, we get the irrational ratio: $\frac{h_1}{h}=\frac{1}{\sqrt{3}}\notin \Q$. 
		By Remark \ref{non-periodic}, any point corresponding to the angle $\frac{\pi}{3}$ in $P$, is non-periodic.
		\begin{Remark}
		Additional examination showed that this point is a connection point.

\end{Remark}

\item Denote the vertices of the triangle $P$ with angles $\frac{2\pi}{9}$, $\frac{\pi}{3}$ and $\frac{4\pi}{9}$ by $a$, $b$ and $c$ 
	respectively. Here $M_P$ is a surface of genus 3 with 2 singular points, corresponding to the vertices $a$ and $c$ in $P$ (see Figure 
	\ref{surfaces_lemma}, surface 3).

	In order to calculate the height ratio we will look at the cylinder decomposition of the surface in the horizontal direction, 
	and normalize one of the triangle sides to unity, as described in Figure \ref{kesm9_periodic}.
	\begin{figure}[h!]
	\begin{center}
	\includegraphics[scale=0.7]{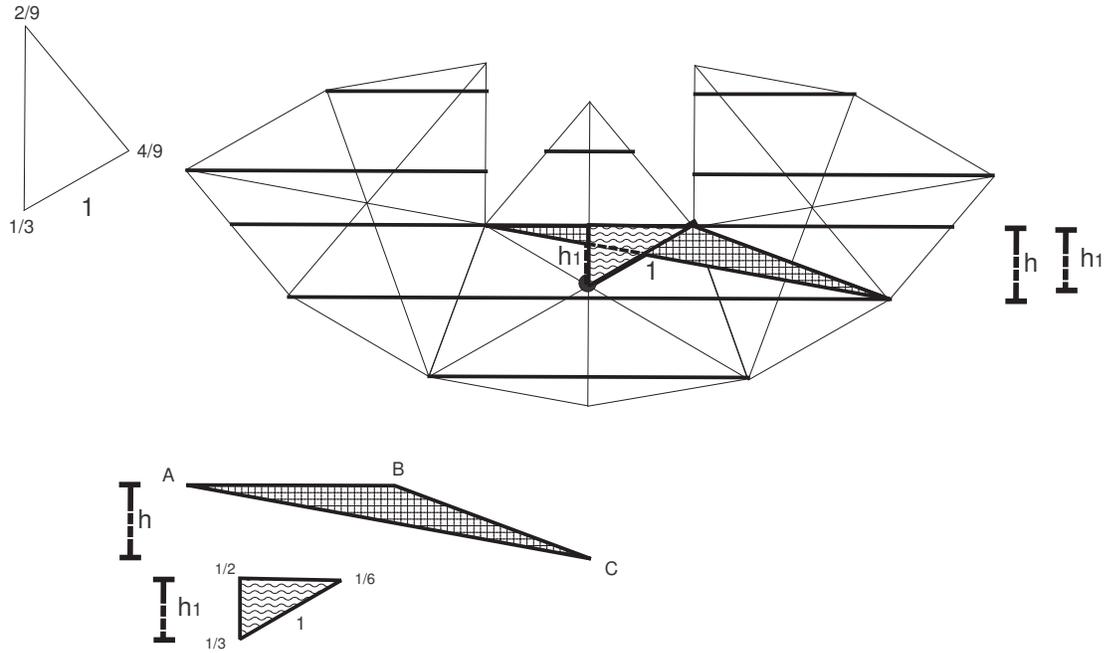}
	\caption{Cylinder decomposition in the horizontal direction of 
				the surface $M_P$ obtained from $P=\left(\frac{2\pi}{9}, \frac{\pi}{3}, \frac{4\pi}{9}\right)$ - 
				calculations for the point corresponding to $\frac{\pi}{3}$.}
	\label{kesm9_periodic}			
	\end{center}
	\end{figure}

	The following calculations were made:
	\[AB=2\cdot \sin(\frac{\pi}{3}) = \sqrt{3} \quad ; \quad
	\measuredangle{B}=\frac{8\pi}{9} \quad ; \quad 
	h_1=\cos(\frac{\pi}{3})=\frac{1}{2} \in \Q\]
	Therefore, according to Remark \ref{non-periodic}, since $h_1 \in \Q$, it remains to show that $h \notin \Q$.
	We calculate $h$ by comparing two different formulas for the area of triangle $\vartriangle{ABC}$ as follows:
	\[\frac{1}{2}\cdot h \cdot AB = \frac{1}{2} \cdot AB\cdot AB \cdot \sin(B) 
	\quad \Longrightarrow \quad h=AB \cdot \sin(B)=\sqrt{3} \cdot \sin(\frac{\pi}{9})\]
	We will show $h$ is a root of the polynomial $8x^3-18x+9$, whose roots are irrational. Using the trigonometric identity:
	\[\sin^3x=\frac{3\sin x-\sin 3x}{4}\] 
	We get: \[\sin^3(\frac{\pi}{9})=\frac{3\cdot\sin(\frac{\pi}{9})-\sin(\frac{\pi}{3})}{4}\]
	Hence, by substituting in the polynomial above we get:
	\[8 \cdot (\sqrt{3})^3 \cdot \sin^3\left(\frac{\pi}{9}\right) - 18 \cdot \sqrt{3} \cdot \sin\left(\frac{\pi}{9}\right) + 9 =
	8\cdot 3 \cdot \sqrt{3} \cdot \frac{1}{4} \cdot \left(3\cdot\sin\left(\frac{\pi}{9}\right)-\sin\left(\frac{\pi}{3}\right)\right)-18 \cdot \sqrt{3} \cdot \sin\left(\frac{\pi}{9}\right) +9 = \]
\[=18 \cdot \sqrt{3} \cdot \sin\left(\frac{\pi}{9}\right)-6\cdot \sqrt{3} \cdot \frac{\sqrt{3}}{2} -18 \cdot \sqrt{3} \cdot \sin\left(\frac{\pi}{9}\right)+9=0\]
	By the \textit{rational root test}, if $\frac{p}{q} \in \Q$ is a root of the polynomial above, than $p \mid 9$ and $q \mid 8$, 
	i.e. $\frac{p}{q} \in \{1, 3, \frac{1}{2}, \frac{1}{4}, \frac{3}{2}, \frac{3}{4} \}$.
	One can check that none of these rational numbers is a root of this polynomial, hence $h\notin \Q$.

\item Denote the vertices of the triangle $P$ with angles $\frac{\pi}{3}$, $\frac{7\pi}{15}$ and $\frac{\pi}{5}$ by $a$, $b$ and $c$ 
	respectively. $M_P$ is a surface of genus 4, with one singular point corresponding to the vertex $b$ in $P$ (see Figure 
	\ref{surfaces_lemma}, surface 4).

	\textbf{First we will examine the points corresponding to the vertex $c$:}

	In order to calculate the height ratio we will look at the cylinder decomposition of the surface in the horizontal direction, 
	and normalize one of the triangle sides to unity, as described in Figure \ref{kesm15_periodic5}.
	\begin{figure}[h!]
	\begin{center}
	\includegraphics[scale=0.54]{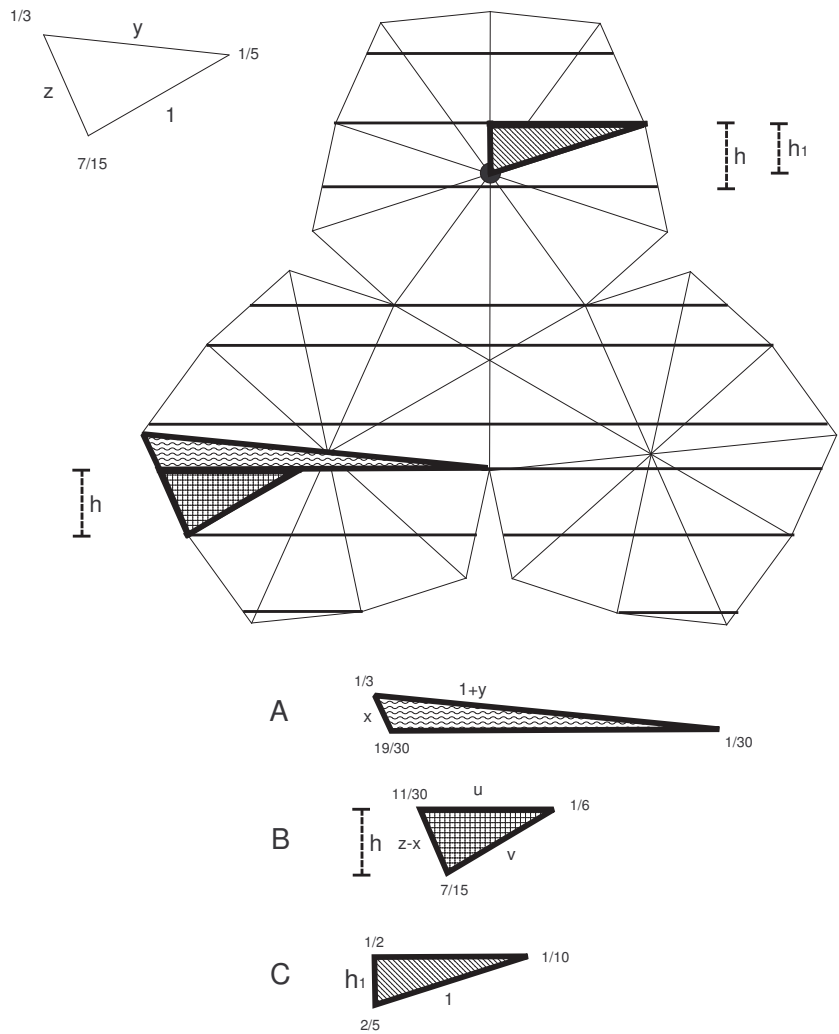}
	\caption{Cylinder decomposition in the horizontal direction of 
			the surface $M_P$ obtained from $P=\left(\frac{\pi}{5}, \frac{\pi}{3}, \frac{7\pi}{15}\right)$ - 
			calculations for the point corresponding to $\frac{\pi}{5}$.}
	\label{kesm15_periodic5}
	\end{center}
	\end{figure}

	Considering triangle $C$, we have: \ $h_1=\sin\left(\frac{\pi}{10}\right)$. We calculate $h$ by comparing two different formulas for 
	the area of triangle $B$ as follows:
	\begin{equation}
	\label{h}
	\frac{1}{2} \cdot h \cdot u = \frac{1}{2} \cdot v \cdot (z-x) \cdot \sin\left(\frac{7\pi}{15}\right)
		\quad \Longrightarrow \quad h=\frac{v\cdot (z-x) \cdot \sin\left(\frac{7\pi}{15}\right)}{u}
	\end{equation}
	
	The lengths of $z$ and $y$ are calculated from within the initial triangle 
	$\left(\frac{\pi}{5}, \frac{\pi}{3}, \frac{7\pi}{15}\right)$ as follows:
	\begin{equation}
	\label{y_z}
	z=\frac{\sin\left(\frac{\pi}{5}\right)}{\sin\left(\frac{\pi}{3}\right)} \quad ; \quad
		y=\frac{\sin\left(\frac{7\pi}{15}\right)}{\sin\left(\frac{\pi}{3}\right)}
	\end{equation}
	The length of $x$ is calculated from within triangle $A$ as follows: \;
	\[x=(1+y) \cdot \frac{\sin\left(\frac{\pi}{30}\right)}{\sin\left(\frac{19\pi}{30}\right)}\]
	The lengths of $v$ and $u$ are calculated from within triangle $B$:
	\[v=(z-x) \cdot \frac{\sin\left(\frac{11\pi}{30}\right)}{\sin\left(\frac{\pi}{6}\right)} \quad ; \quad
	u=(z-x) \cdot \frac{\sin\left(\frac{7\pi}{15}\right)}{\sin\left(\frac{\pi}{6}\right)}\]
	Substituting in (\ref{h}) we get:
	\[h=\frac{v}{u} \cdot (z-x) \cdot \sin\left(\frac{7\pi}{15}\right)=
		\frac{\sin\left(\frac{11\pi}{30}\right)}{\sin\left(\frac{\pi}{6}\right)} \cdot \frac{\sin\left(\frac{\pi}{6}\right)}{\sin\left(\frac{7\pi}{15}\right)} \cdot (z-x) \cdot \sin\left(\frac{7\pi}{15}\right)=
\sin\left(\frac{11\pi}{30}\right) \cdot (z-x)=\]
\[\sin\left(\frac{11\pi}{30}\right) \cdot \left(\frac{\sin\left(\frac{\pi}{5}\right)}{\sin\left(\frac{\pi}{3}\right)}-(1+a) \cdot \frac{\sin\left(\frac{\pi}{30}\right)}{\sin\left(\frac{19\pi}{30}\right)}\right)=
\sin\left(\frac{11\pi}{30}\right) \cdot \left[\frac{\sin\left(\frac{\pi}{5}\right)}{\sin\left(\frac{\pi}{3}\right)}-\left(1+\frac{\sin\left(\frac{7\pi}{15}\right)}
{\sin\left(\frac{\pi}{3}\right)}\right) \cdot \frac{\sin\left(\frac{\pi}{30}\right)}{\sin\left(\frac{19\pi}{30}\right)}\right]\]

	After simplifying the expressions we get: $\frac{h}{h_1}=\frac{1}{10}\left(5+\sqrt{75-30\sqrt{5}}\right) \notin \Q$.
	Therefore, according to Remark \ref{non-periodic}, the points correspond to the angle $\frac{\pi}{5}$ are non-periodic points.
\\ \\
	\textbf{Now, we will do the respective calculations for the points corresponding to the angle $\frac{\pi}{3}$}, 
				according to the markings in Figure \ref{kesm15_periodic3}:
	\begin{figure}[h!]
	\begin{center}
	\includegraphics[scale=0.5]{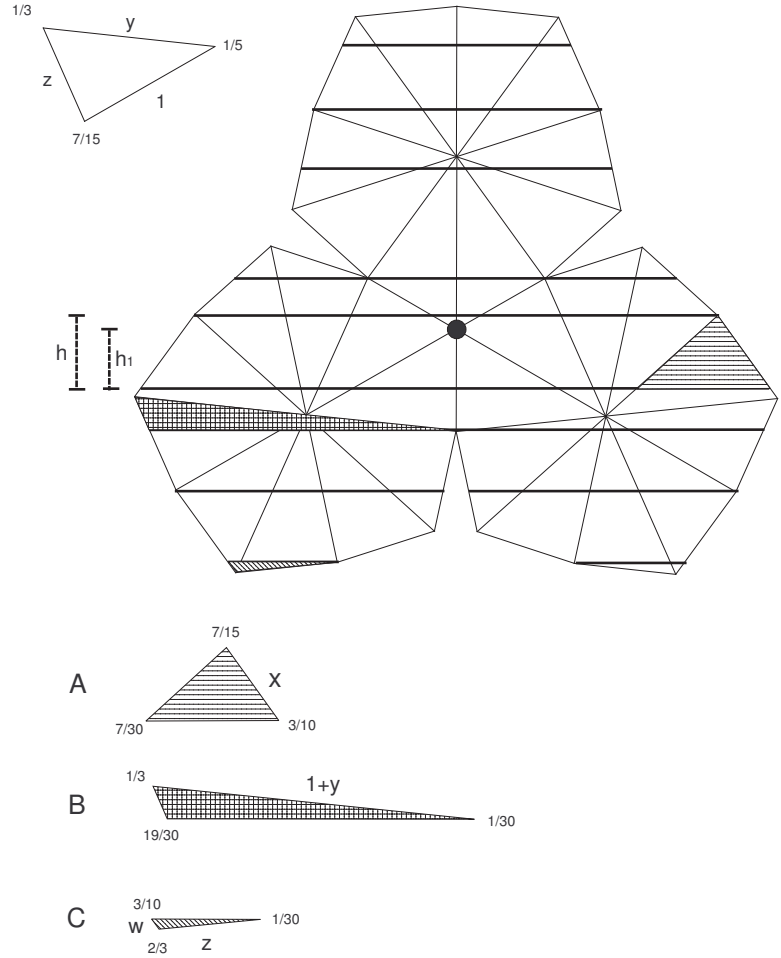}
	\caption{Cylinder decomposition in the horizontal direction of 
			the surface $M_P$ obtained from $P=\left(\frac{\pi}{5}, \frac{\pi}{3}, \frac{7\pi}{15}\right)$ - 
			calculations for the point corresponding to $\frac{\pi}{3}$.}.
	\label{kesm15_periodic3}
	\end{center}
	\end{figure}
First, we will calculate the height $h=h_A$ (where $h_A$ is the height of the triangle $A$):
\[h=h_A=x \cdot \sin\left(\frac{3\pi}{10}\right)\]
Where $x$ is calculated as follows:
\[x=z-w=\frac{\sin\left(\frac{\pi}{5}\right)}{\sin\left(\frac{\pi}{3}\right)}-
z\cdot \frac{\sin\left(\frac{\pi}{30}\right)}{\sin\left(\frac{3\pi}{10}\right)}\]
Second, we calculate the height $h_1$. As described in Figure \ref{kesm15_periodic3} ($y$ and $z$ as in (\ref{y_z})):
\[h_1=h_B+h_C \quad \text{where} \quad h_B=(y+1)\cdot \sin\left(\frac{\pi}{30}\right) \quad \text{and} \quad
h_C=z\cdot \sin\left(\frac{\pi}{30}\right)\]
Therefore, the wanted ratio is \; $\frac{h_1}{h}=\frac{h_B+h_C}{h}$,\; which simplifies to: \;$\frac{1}{2}(-1+\sqrt{5})\notin \Q$.
\item The surface $M_P$ obtained from the triangle $P$ with angles$\left(\frac{\pi}{2n}, \frac{\pi}{n}, \frac{(2n-3)\pi}{2n}\right)$ $n\geq 5$ 
	and odd, is illustrated in Figure \ref{surfaces_lemma} (surface 5). Since the points corresponding to the angle $\frac{\pi}{n}$
	are the centers of the regular n-gons, the same arguments as in the first part of the proof, imply that these points are non-periodic.
\end{enumerate}
\end{proof}

\newpage

\section{Proof of Theorem 4.5}

Some notations:
\begin{itemize}
\item $N_P$, $N_Q$ - The least common multiples of the denominators of the angles in $P$ and $Q$ respectively.
\item $M_P$, $M_Q$ - The surfaces obtained from billiard in $P$ and $Q$ respectively.
\end{itemize}

\begin{enumerate}
\item \textbf{\boldmath{$P$} is a regular polygon:}\\
	In this case all the vertices are singular points of the surface, and thus periodic. Since we want a single non-periodic branching 
	point, there is no appropriate cover.
\item  \textbf{\boldmath{$P$} is a right triangle with angles $\left(\frac{\pi}{n}, \frac{(n-2)\pi}{2n}, \frac{\pi}{2}\right)$, $n\geq 4$.}
	\begin{itemize}
	\item [a)] \textbf {If \boldmath{$n$} is even:}
		$M_P$ is the regular $n$-gon with parallel sides identified (see Figure \ref{octagon}).
		\begin{figure}[h!]
		\begin{center}
		\includegraphics[scale=0.31]{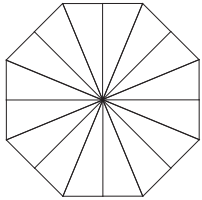}
		\caption{The octagon - $M_P$ for n=8}
		\label{octagon}			
		\end{center}
		\end{figure}
		All the points in $M_P$ corresponding to vertices in $P$ are periodic. This result is not new (see \cite{GHS}, Corollary 9), 
		but we will show another proof: All the points in $M_P$ corresponding to the vertices in $P$ are fixed points of a rotation by $\pi$.
		Hence, by Lemma \ref{involution}, these are all periodic points. Since we want a single non-periodic branch point, there is no 
		appropriate cover.
	\item [b)] \textbf {If \boldmath{$n$} is odd:}
		$M_P$ is the double regular n-gon with parallel sides identified (see Figure \ref{surfaces_lemma}, surface 1). $M_P$ has one singular 
		point corresponding to the angle $\frac{(n-2)\pi}{2n}$ in $P$. According to Lemma \ref{involution}, the midpoints of the sides of $M_P$
		(the points corresponding to the angle $\frac{\pi}{2}$) are periodic points, since they are fixed points of the rotation by $\pi$. 
		Following Lemma \ref{non-periodic_points}, the two centers of $M_P$, which correspond to the angle $\frac{\pi}{n}$, are non-periodic 
		points. Therefore, when we look for an appropriate cover, we actually look for a surface branched over one of the centers of $M_P$.

		($\star$) Notice that, considering Remark \ref{remark_fp_of_G_Q}, since these centers are not fixed points of the reflections with 
		respect to the sides of the regular n-gons, these reflections should not be in $G_Q$. Therefore, for an appropriate cover, 
		these sides must be internal in $Q$. 

		In the beginning, we will consider the first class of appropriate covers (as mentioned in page 12), i.e. a branched 
		cover \text{$\pi:M_Q \to M_P$}, where the branch locus is a single non-periodic point in $M_P$, meaning one of the centers of $M_P$. 
		First, according to Corollary \ref{not_appropriate}, for such a cover $N_Q$ must be odd. Second, we want that the singular point, 
		which is a periodic point, to be a regular point of the cover. Hence, following Lemma \ref{branching}, all the angles of the vertices 
		in $Q$, which correspond to the angle $\frac{(n-2)\pi}{2n}$ ($n$ is odd $\Longrightarrow (n-2,2n)=1$) must be 
		$k \cdot\frac{(n-2)\pi}{2n} \leq 2\pi$ with $k\mid 2n$ .
		\begin{itemize}
		\item $k \cdot\frac{(n-2)\pi}{2n} \leq 2\pi$ implies $k \in \{1,2,\ldots, 5\}$ for $n\geq 7$ and odd, 
			and $k \in \{1,2,\ldots, 6\}$ for $n=5$. 
		\item The requirement for odd $N_Q$ forces $k$ to be even. Hence $k \in \{2, 4\}$ for $n\geq 7$ and odd, and $k \in \{2, 4, 6\}$ 
			for $n=5$. 
		\item Finally, the requirement $k \mid 2n$ reduces the possibilities to $k=2$. 
		\end{itemize}
		($\clubsuit$) Consequently, each angle $2 \cdot \frac{n-2}{2n}\pi$ in $Q$ must be delimited by 2 external sides 
		(since we cannot expand it). 

		We will start constructing a polygon $Q$, under all the requirements above, step by step, as described in Figure \ref{steps}.
		\begin{figure}[h!]
		\begin{center}
		\includegraphics[scale=0.41]{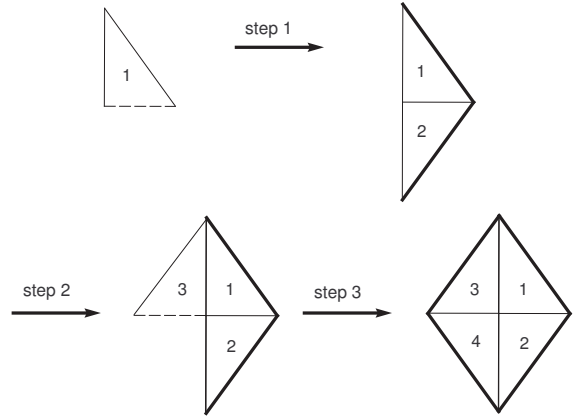}
		\caption{Steps of the construction of Q}
		\label{steps}
		\end{center}
		\end{figure}
		\textbf{Step 1:} Without loss of generality, we start with triangle number 1. The broken line indicates a side that must be internal 
			according to ($\star$). Therefore we add triangle number 2. Following ($\clubsuit$) we add the bold lines which indicate external 
			sides. Since the polygon in this step (triangles 1 and 2) determines a lattice surface (number 4 in the list), we need to enlarge 
			$Q$ and continuing to the next step. \quad
		\textbf{Step 2:} Without loss of generality we add triangle number 3. \quad
		\textbf{Step 3:} Again, as in step 1, the broken line indicates a side that has to be internal, therefore we add triangle number 4. 
			By ($\clubsuit$) we add the bold lines as in step 1. Now, since all sides are external, the construction is complete. \quad 
		According to Proposition \ref{branching}, the cover $\pi:M_Q \to M_P$, which corresponds to this polygon $Q$, is branched over two 
		different points corresponding to the centers of $M_P$. Therefore, there is no such appropriate cover.

		Up to this point, we have shown that there is no appropriate cover of the first class. Next, we will examine the possibility of finding 
		an appropriate cover of the second class. Therefore, we will look for:
		\begin{itemize}
		\item [\textbullet] A polygon $\overline{P}$ which is tiled by reflections of the triangle 
			$P=\left(\frac{\pi}{n}, \frac{(n-2)\pi}{2n}, \frac{\pi}{2}\right)$ $n\geq 5$ and odd, such that the covering map 
			$\pi:M_{\overline{P}} \to M_P$ is branched only over periodic points.
		\item [\textbullet] A polygon $Q$ tiled by $\overline{P}$ such that the covering map $\pi:M_Q \to M_{\overline{P}}$ \ 
			is branched over a single non-periodic point.
		\end{itemize}
		($\Diamondblack$) Notice that:
		\begin{itemize}
		\item If $\overline{P}$ has an angle $\alpha > \pi$, then any such polygon $Q$, must have this angle $\alpha$ as well. Therefore, If 
			$\overline{P}$ has an even angle greater than $\pi$, according to Corollary \ref{not_appropriate}, it will not yield an 
			appropriate cover.
		\item If $\overline{P}$ has two external sides with an angle greater than $\pi$ between them, then any such polygon $Q$, must have these 
			external sides as well. Therefore, If these sides are the sides of the regular n-gon, according to ($\star$) it will not yield an 
			appropriate cover.
		\end{itemize}

		As we mentioned before, except for the centers of $M_P$, all the points corresponding to the vertices in $P$, are periodic points
		of $M_P$. Following Proposition \ref{branching}, branching over a point corresponding to an angle $\frac{\pi}{2}$ implies 
		$\overline{P}$ has an angle $\frac{3\pi}{2}$. In that case, since $\frac{3\pi}{2}>\pi$, any polygon $Q$ which is tiled by 
		$\overline{P}$, will have this angle as well. Consequently, $N_Q$ will be even and by Corollary \ref{not_appropriate} it will 
		not be an appropriate cover. Therefore, if there exists an appropriate cover of the second class, then $M_{\overline{P}}$ will branch 
		only over the singular point of $M_P$.
		According to Proposition \ref{branching}, a branching over the singular point of $M_P$ will occur if and only if $\overline{P}$ 
		has an angle $k\cdot \frac{n-2}{2n}\pi<2\pi$ with $k\nmid 2n$. Therefore, we need to check the following possibilities for 
		appearance of an angle $k\cdot \frac{n-2}{2n}\pi$ in $\overline{P}$:
		\begin{itemize}
			\item $k \in \{3,4,6\}$  \; for $n=5$.
			\item $k \in \{3,4,5\}$  \; for $n=7$.
			\item $k \in \{4,5\}$  \; for $n=9$.
			\item $k \in \{3,4\}$  \; for $n\geq 11$ and $3\nmid n$.
			\item $k=4$  \; for $n\geq 11$ and $3\mid n$.
		\end{itemize}
		The case of $n=5$ will be examined in the sequel.

		For $n\geq 7$ and odd, the angle $k\cdot \frac{n-2}{2n}\pi$ with $k=3,5$ is even and greater than $\pi$. Hence, following 
		($\Diamondblack$) it will not give an appropriate cover. Therefore, for $n\geq 7$ and odd, it remains to check the possibility for 
		an angle $4\cdot \frac{n-2}{2n}\pi$ in $\overline{P}$. Such an angle can appear in $\overline{P}$ in two ways, as shown in Figure 
		\ref{4overlineP} (the bold lines indicate external sides for $\overline{P}$). Following ($\Diamondblack$), the left option in the 
		figure cannot occur since $4\cdot \frac{n-2}{2n}\pi > \pi$, and the bold lines are sides of the regular n-gon.
		\begin{figure}[h!]
		\begin{center}
		\includegraphics[scale=0.4]{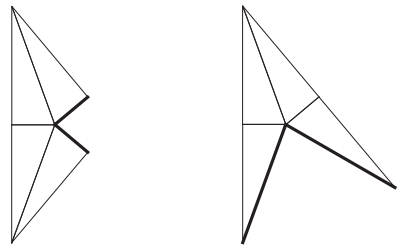}
		\caption{Two options to start the construction of $\overline{P}$	
					with the angle $4\cdot \frac{n-2}{2n}\pi$}				
		\label{4overlineP}
		\end{center}				
		\end{figure}
		
		It remains to check if it is possible to construct suitable $\overline{P}$ and $Q$, beginning with the right option in Figure 
		\ref{4overlineP}. We will start the construction under the requirements, step by step, as described in Figure 
		\ref{constructing_overlineP}.
		\begin{figure}[h!]
		\begin{center}
		\includegraphics[scale=0.52]{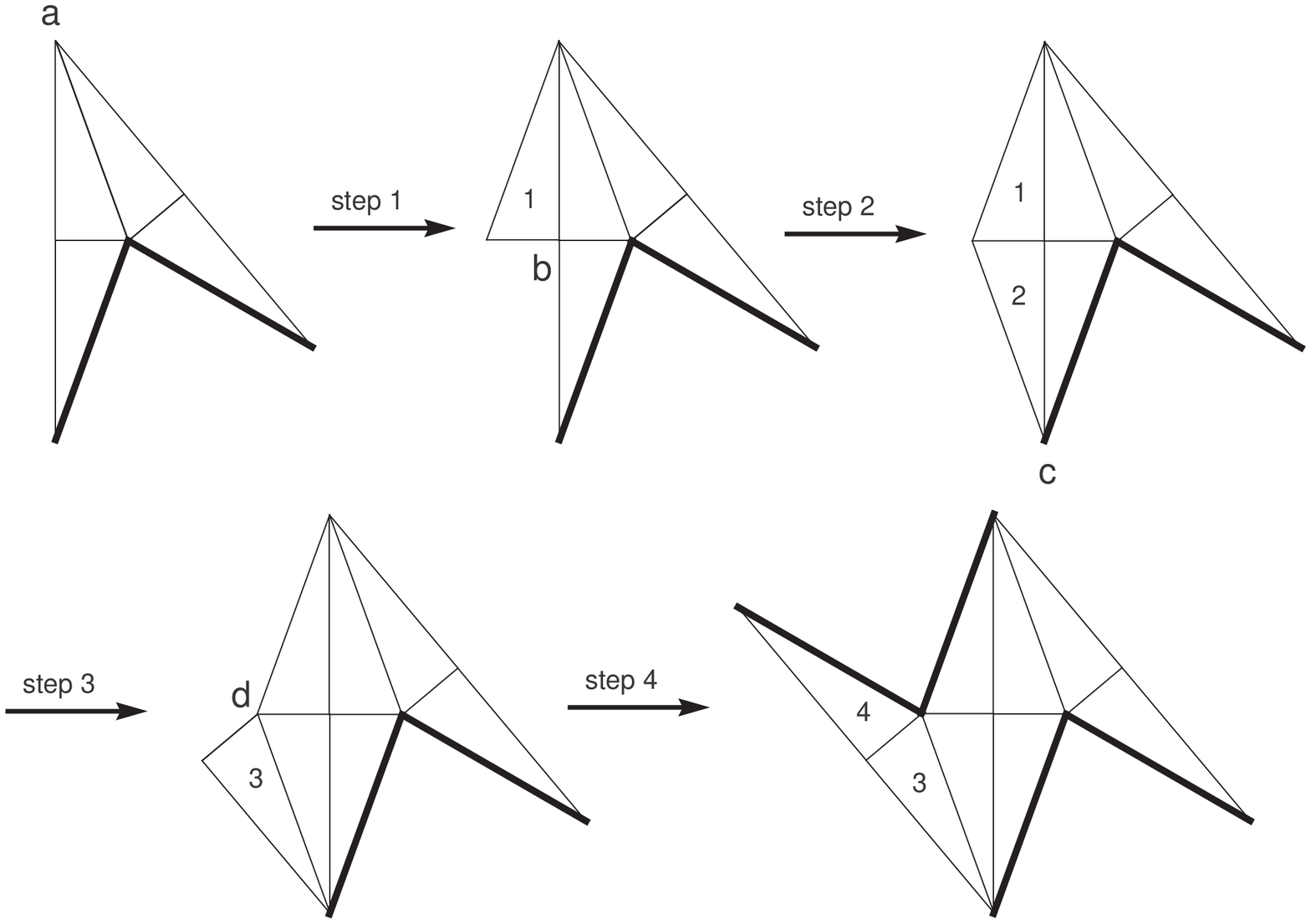}
		\caption{Constructing $\overline{P}$}
		\label{constructing_overlineP}
		\end{center}
		\end{figure}

		\textbf{Step 1:} According to Proposition \ref{branching}, in order to prevent a branching over a non-periodic 
			point, we need to fix the angle at the vertex $a$. Therefore we add triangle number 1. \quad
		\textbf{Step 2:} Following ($\Diamondblack$), since the angle of the vertex $b$ is even and greater than $\pi$, 
			we need to fix that angle by adding triangle number 2. \quad 
		\textbf{Step 3:} As in step 1, we fix the angle at vertex $c$ by adding triangle number 3. \quad 
		\textbf{Step 4:} Since the angle $3 \cdot \frac{n-2}{2n}$ of vertex $d$ is even and greater than $\pi$, according to 
			($\Diamondblack$), we have to enlarge it to $4 \cdot \frac{n-2}{2n}$ by adding triangle number 4.

		According to Propositions \ref{branching} and \ref{commensurable}, the polygon in the last step has the lattice property if and only 
		if $3\mid n$. In that case, we will try to construct a suitable polygon $Q$ tiled by this polygon. Without loss of generality, we 
		reflect $\overline{P}$ in the broken line as described in Figure \ref{overlineP_to_Q}. Consequently, according to Remark 
		\ref{gluing}, $Q$ have 2 singular vertices, $v$ and $u$, with angles $\frac{2\pi}{n}$ and $\frac{6\pi}{n}$ respectively. 
		These vertices are corresponding to the two centers in $M_P$. Since we cannot fix these angles by reflecting $\overline{P}$ 
		again (as can be shown in the figure), by Proposition \ref{branching}, the respective cover will have two branch points corresponding 
		to the two centers of $M_P$. Hence, it will not be an appropriate cover. Therefore, we will try expand the construction of 
		$\overline{P}$ after the last step in order to find an appropriate cover.
		\begin{figure}[h!]
		\begin{center}
		\includegraphics[scale=0.49]{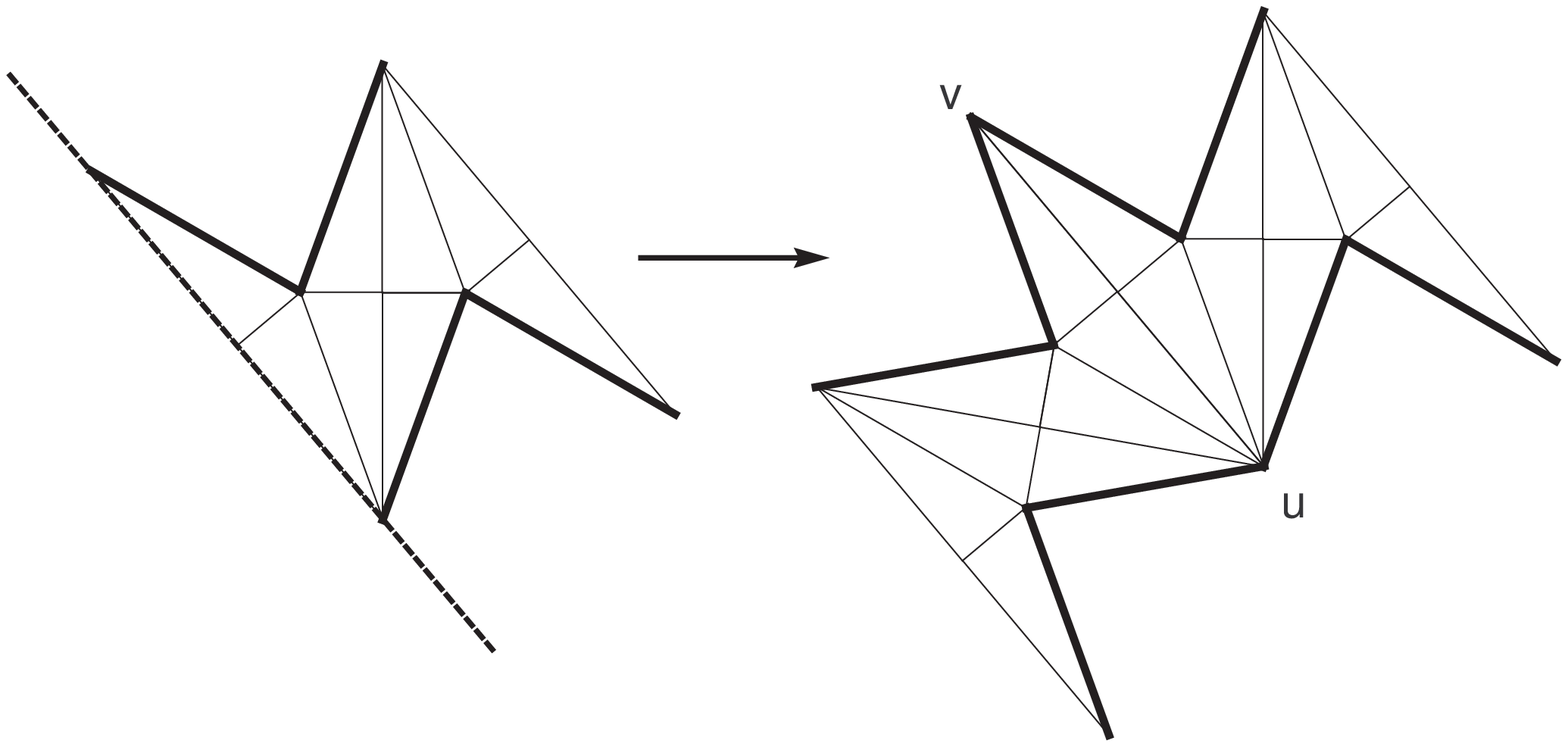}
		\caption{$Q$ tiled by reflections of $\overline{P}$}				
		\label{overlineP_to_Q}
		\end{center}				
		\end{figure}

		The continuing of the construction is described in Figure \ref{continuing_overlineP}.
		\textbf{Step 1:} Without loss of generality, we add triangle number 5. Following the arguments in ($\Diamondblack$), 
			we must add triangle number 6. \quad
		\textbf{Step 2:} According to Remark \ref{gluing}, this polygon has 2 singular vertices corresponding to the two centers in $M_P$, 
			$a$ and $b$, with angles $\frac{2\pi}{n}$ and $\frac{4\pi}{n}$ respectively. 	Therefore, according to Proposition \ref{branching}, 
			a surface obtained from this polygon will have 2 different non-periodic branch points corresponding to the centers of $M_P$. 
			Hence, we must fix these angles by adding triangles number 7 and 8. By ($\Diamondblack$), in that case, it cannot yield an 
			appropriate cover.
		\begin{figure}[h!]
		\begin{center} 
		\includegraphics[scale=0.76]{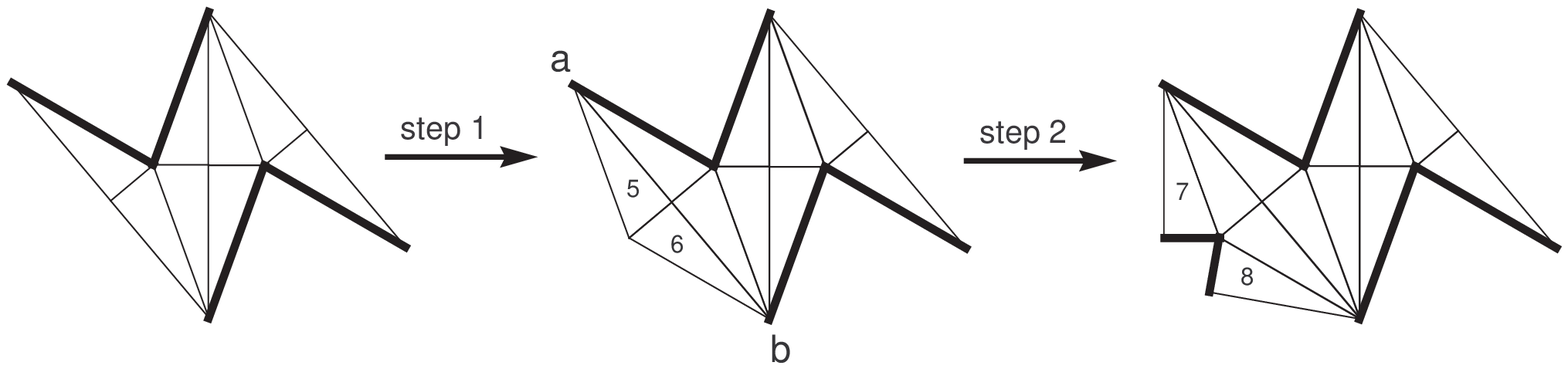}
		\caption{Continuing the construction of $\overline{P}$}
		\label{continuing_overlineP}
		\end{center}				
		\end{figure}
		
		Up to this point we have shown that for any $n \geq 7$ and odd, there is no appropriate cover. It remains to check the case of $n=5$.
		According to Proposition \ref{branching}, since we want $M_{\overline{P}}$ to branch over periodic points in $M_P$, all the vertices 
		in $\overline{P}$ corresponding to the angle $\frac{\pi}{5}$, have to appear with angles $\frac{\pi}{5}$, $\pi$ or $2\pi$. ($\spadesuit$)

		Since we are interested in a polygon $Q$, tiled by $\overline{P}$, such that $M_Q$ is branched over a single non-periodic point, 
		we must have a vertex with angle $\frac{\pi}{5}$ in $\overline{P}$. Hence, we start the construction with triangle number 1 with two 
		external sides (as described in Figure \ref{pentagon}). The following steps were taken:

		\textbf{Step 1:} The only way to continue is by adding triangle number 2 with an external side. This side must be external, otherwise,
			adding a triangle to the left of triangle 2, leads to an angle of $\frac{3\pi}{2}$. As explained before, such an angle cannot appear 
			in $\overline{P}$. Since the sides of triangle number 1 are external, we cannot enlarge it to $2\pi$. \quad
			The polygon in this step (triangles 1 and 2) determines a lattice surface which we treated in the first part of this section 
			(case 2b, appropriate cover of the first class). Hence we need to enlarge $Q$ and continue to the next step. \quad
 		\textbf{Step 2:} Adding triangle number 3. \quad
		\textbf{Step 3:} Following ($\spadesuit$), we need to fix the angle $\frac{2\pi}{5}$ of vertex $a$. The only way to do it, 
			is by enlarging it to $\pi$ by adding triangles 4, 5 and 6. \quad
		\begin{figure}[h!]
		\begin{center}
		\includegraphics[scale=0.5]{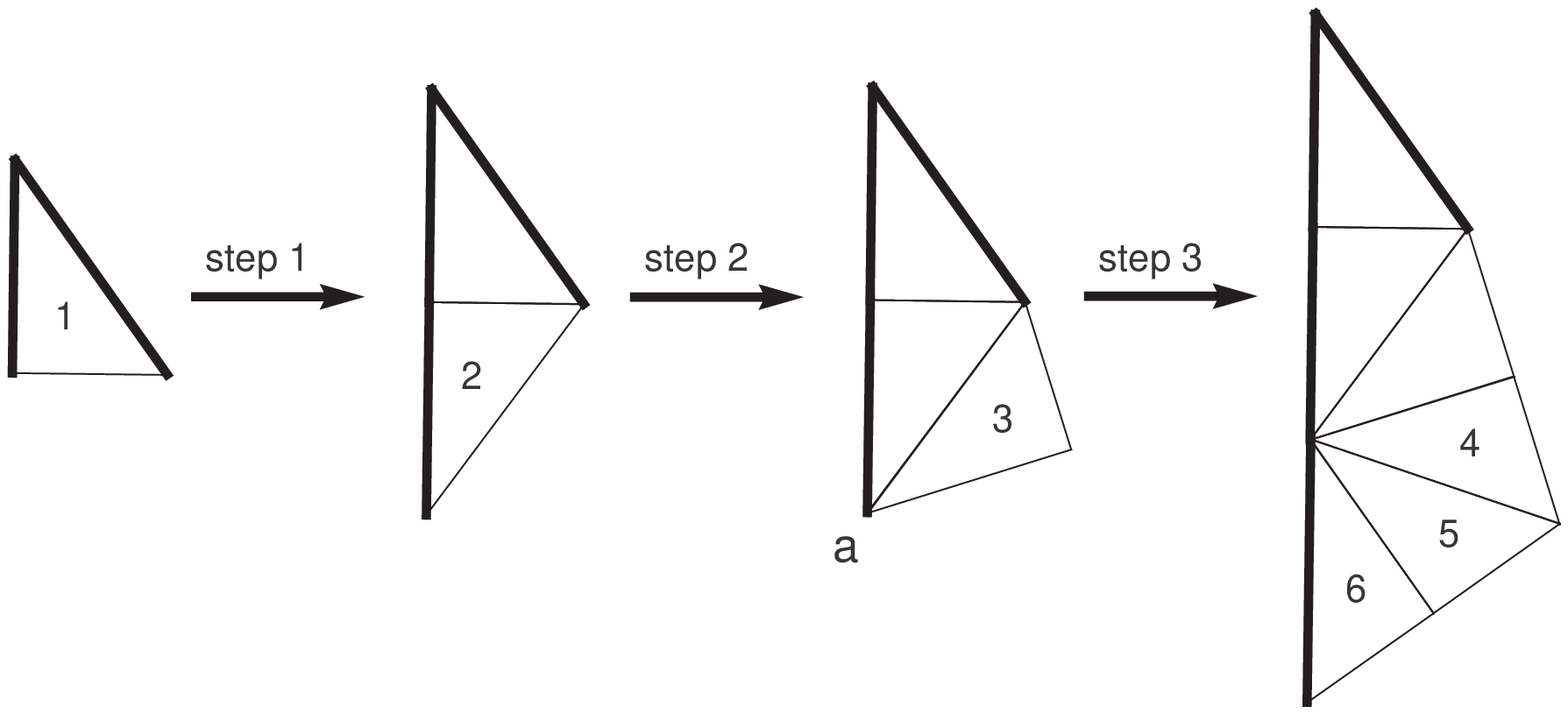}
		\caption{The constructing of $\overline{P}$}
		\label{pentagon}
		\end{center}				
		\end{figure}

		Consequently, any suitable $\overline{P}$ must contain the last shape in Figure \ref{pentagon}. We will show that for any such 
		polygon $\overline{P}$, we cannot find a polygon $Q$ tiled by $\overline{P}$ that would yield an appropriate cover. 

		We have started the construction of $\overline{P}$ with an angle $\frac{\pi}{5}$ in order to multiply this angle to be
		$\frac{k\pi}{5}$ with $k>1$ and $(k,5)= 1$ in $Q$. That is, for having the wanted branching of the cover 
		$\pi:M_ Q\to M_{\overline{P}}$. Hence, to find a suitable polygon $Q$, we have to reflect with respect to at least one of the
		sides of that angle. We describes two possible reflections in Figure \ref{Q_from_overlineP}. The first reflection 
		cannot occur, since in this case we get two external sides, $A$ and $B$, that must be internal according to ($\star$).
		\begin{figure}[h!]
		\begin{center}
		\includegraphics[scale=0.5]{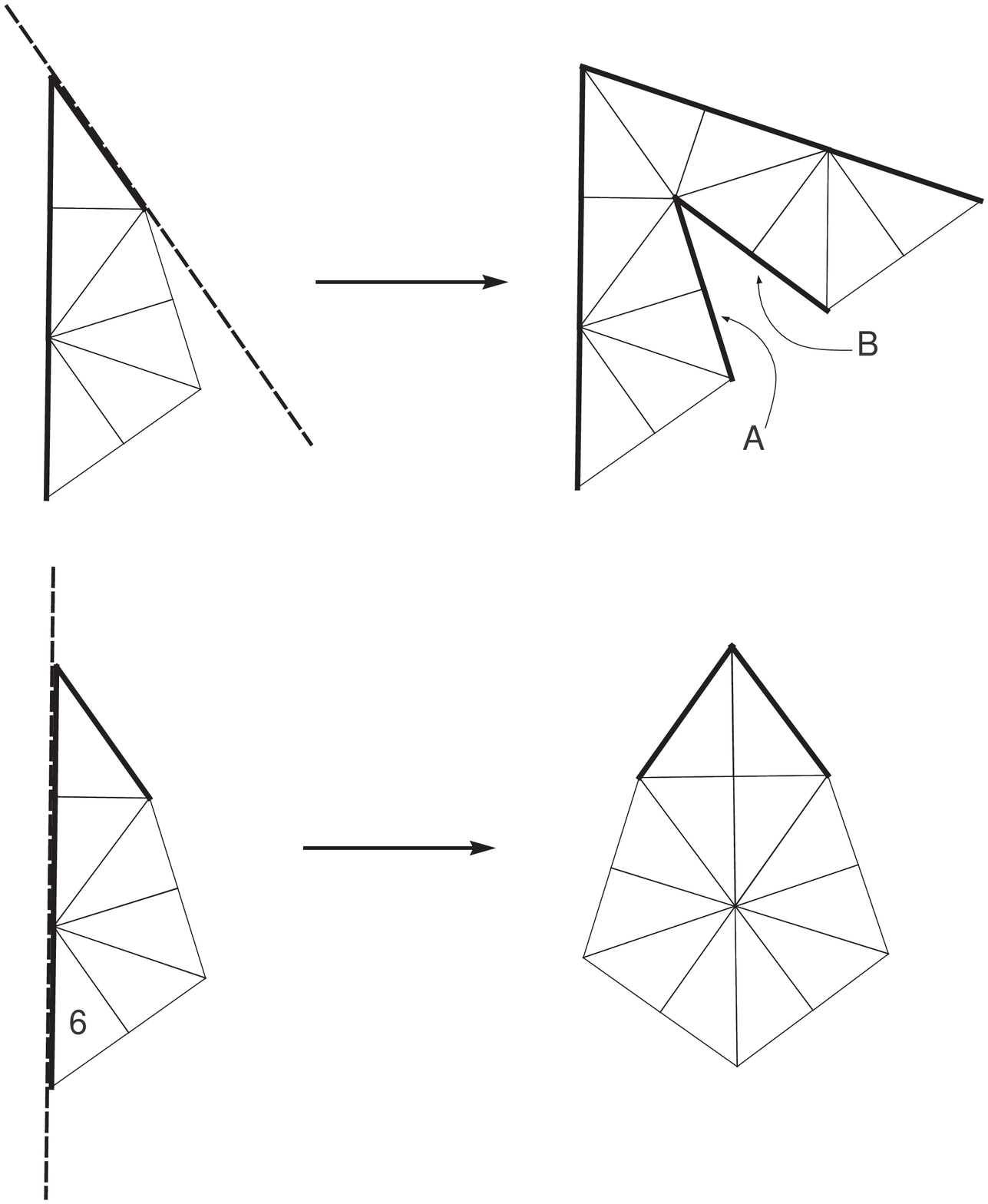}
		\caption{Constructing from $\overline{P}$ for n=5}
		\label{Q_from_overlineP}
		\end{center}				
		\end{figure}

		To show that the second reflection cannot occur as well, we will first explain why the side of triangle number 6 must be
		external in $\overline{P}$. The reflection with respect to the vertical direction multiplies the angle of the lower vertex 
		of triangle 6. Therefore, if this side is internal, the angle of vertex $a$ in $\overline{P}$ must be $2\cdot\frac{3\pi}{10}$ 
		or $3 \cdot\frac{3\pi}{10}$. We will examine these options as described in Figure \ref{external6}.
		\begin{itemize}
		\item If the angle is $2\cdot\frac{3\pi}{10}$, according to Proposition \ref{branching} with $\overline{P}$ playing the role of P 
			and $m_0=3$, $n_0=5$, the reflection with respect to the vertical direction will cause a branching of the cover 
			$\pi:M_Q \to M_{\overline{P}}$ over the singular point that corresponds to this vertex. Therefore, it will not yield an 
			appropriate cover.
		\item If the angle is $3\cdot\frac{3\pi}{10}$, such a reflection will cause two external sides of the regular n-gon. 
			Following ($\star$), it will not give an appropriate cover.
		\end{itemize}
		\begin{figure}[h!]
		\begin{center}
		\includegraphics[scale=0.55]{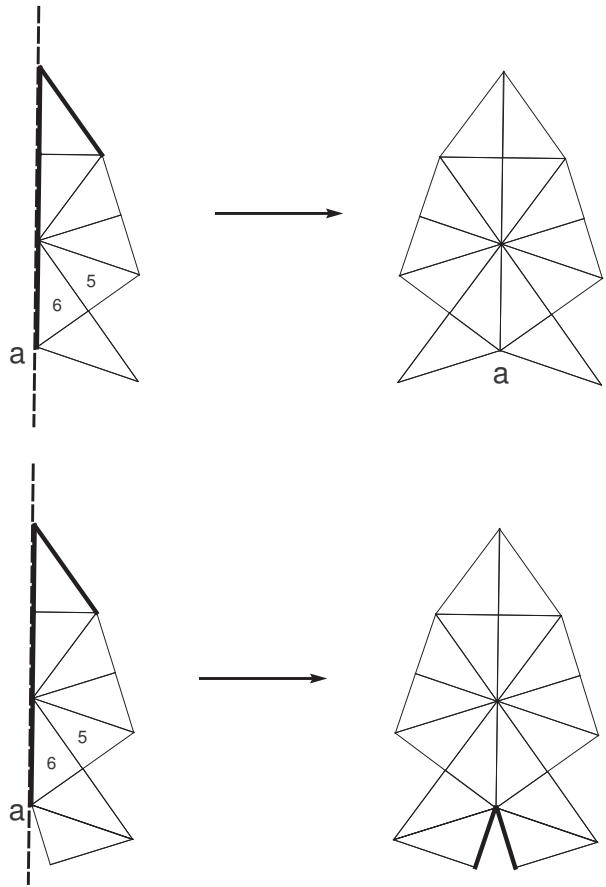}
		\caption{The side in triangle number 6 has to be 
				external in $\overline{P}$}
		\label{external6}
		\end{center}				
		\end{figure}
	
		Hence, the angle of vertex $a$ must be $\frac{3\pi}{10}$, i.e. the side of triangle number 6 must be external in $\overline{P}$. 
		This implies an external side in triangle number 5 as well, as illustrated in the left of Figure \ref{right5_buildingQ}.

		Up to this point, we have shown that $\overline{P}$ must contain the polygon in the left of Figure \ref{right5_buildingQ}.
		We also showed that in order to find a suitable polygon $Q$, we must reflect $\overline{P}$ with respect to line $A$. 
		According to ($\star$), we must to reflect with respect to line $B$ as well. These two reflections multiply the angle of vertex 
		$a$ by 3. By Proposition \ref{branching}, it will cause a branching over the singular point in $M_P$. Therefore, also 
		for $n=5$, we cannot find an appropriate cover.

		\begin{figure}[h!]
		\begin{center}
		\includegraphics[scale=0.7]{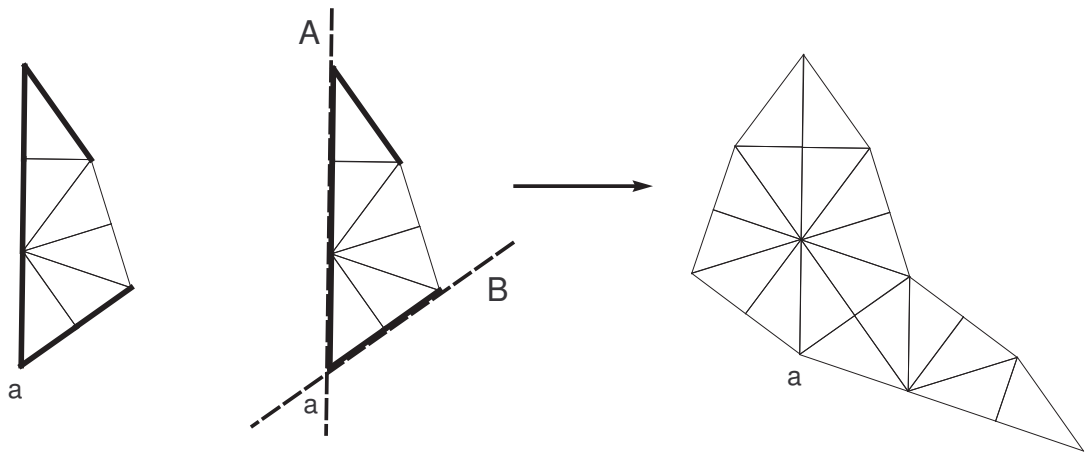}
		\caption{There is no appropriate polygon}
		\label{right5_buildingQ}
		\end{center}				
		\end{figure}
\end{itemize}

\item \textbf{\boldmath{$P$} is an acute isoceles triangle with angles 
		$\left(\frac{(n-1)\pi}{2n}, \frac{(n-1)\pi}{2n}, \frac{\pi}{n}\right)$,
		$n\geq 3$.}\\
		$M_P$ is the regular $2n$-gon with parallel sides identified. As in case 2(a), all the points corresponding to vertices 
		in $P$ are periodic. Hence, there is no appropriate cover.

\item \textbf{\boldmath{$P$} is an obtuse isosceles triangle with angles
	$\left(\frac{\pi}{n}, \frac{\pi}{n}, \frac{(n-2)\pi}{n}\right)$, $n\geq 5$.}
	\begin{itemize}
	\item [a)] \textbf {If \boldmath{$n$} is even:}
		The surface obtained from the billiard in $P$ is a regular double cover (without branching) of the surface in 2(a), i.e.
		double $2n$-gon (see Figure \ref{double_octagon}). Therefore, if there is an appropriate branched cover of $M_P$, 
		it will be an appropriate one for the surface in 2(a), for which we proved there is no such cover.
		\begin{figure}[h!]
		\begin{center}
		\includegraphics[scale=0.38]{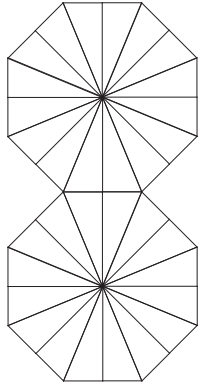}
		\caption{The double octagon, $M_P$ for n=8}				
		\label{double_octagon}
		\end{center}				
		\end{figure}

	\item [b)] \textbf {If \boldmath{$n$} is odd:}
		The surface obtained from the billiard in $P$ is the same surface as in 2(b). Therefore there is no appropriate cover.
	\end{itemize}

\item \begin{itemize}
	\item [a)] \textbf{\boldmath{$P$} is the acute scalene triangle with angles 
	$\left(\frac{\pi}{4}, \frac{\pi}{3}, \frac{5\pi}{12}\right)$.}\\
	Denote the vertices of $P$ by $a$, $b$ and $c$ as in the proof of Lemma \ref{non-periodic_points}. 
	$M_P$ is a surface of genus 3 with one singular point corresponding to the vertex $c$ in $P$ 
	(see Figure \ref{surfaces_lemma}, surface 2). Therefore, if there exists an appropriate cover, the branching must be over a regular point 
	corresponding to a vertex $a$ or $b$. According to Proposition \ref{branching}, if the branch point corresponds to a vertex $b$, 
	then $N_Q$ is even. In that case, according to Corollary \ref{not_appropriate}, we cannot find an appropriate cover. 
	Therefore, we should examine only the first option: The branch point corresponding to a vertex $a$. In this case, Lemma \ref{fp_of_G_Q} 
	implies that $G_Q$ must be isomorphic to $D_3$ (the only dihedral subgroup of $G_P$ that fixes these points). Therefore, all the 
	vertices in $Q$ must be of the form $\frac{k\pi}{3}$. In particular, this requirement implies that every angle of a vertex $c$ must be 
	multiplied by 4, and every angle of a vertex $b$ must be canceled, i.e. multiplied by 4 or 8. ($\star$)
	
	There are 2 possibilities for gluing together 4 vertices of type $c$ by reflections (see Figure \ref{four_c}). The bold lines 
	indicate exterior sides of the polygon (since we cannot expand the angle according to ($\star$)).
	\begin{figure}[h!]
	\begin{center}\qquad
	\includegraphics[scale=0.45]{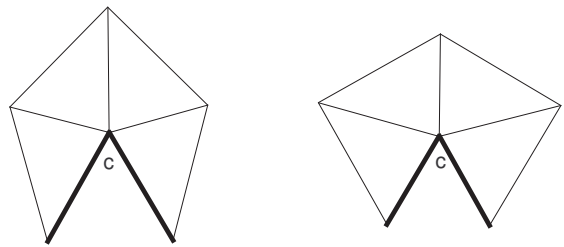}
	\caption{Two options gluing together 4 triangles in vertex $c$}
	\label{four_c}
	\end{center}
	\end{figure}

	\textbf{First we will show that the left option in Figure \ref{four_c} cannot be contained in $Q$}.
	We will start constructing $Q$ with these four triangles, step by step, under above requirements.
	\begin{figure}[h!]
	\begin{center}
	\includegraphics[scale=0.67]{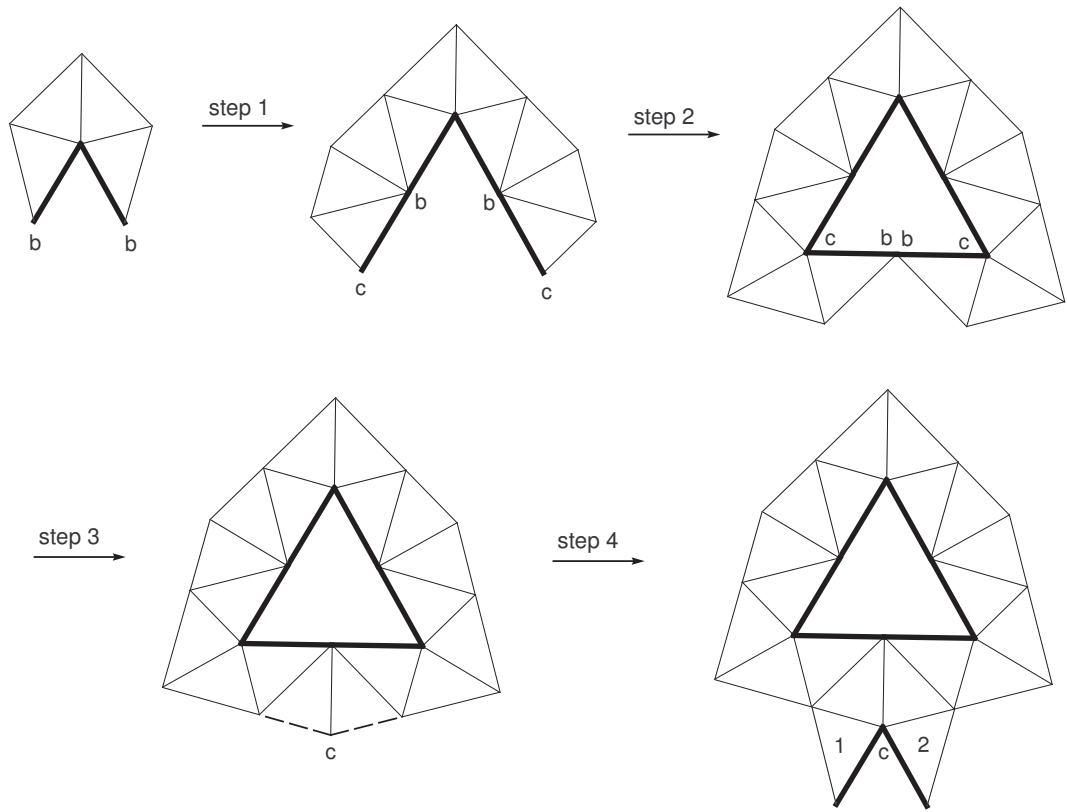}
	\caption{Steps of the construction of $Q$.}
	\label{building_Q}
	\end{center}
	\end{figure}

	\textbf{Step 1}: Canceling the angles of vertices $b$.\quad
	\textbf{Step 2}: Multiplying the angles of vertices $c$ by 4.\quad
	\textbf{Step 3}: Canceling the angles of the vertices $b$.\quad
	\textbf{Step 4}: The broken lines form an even angle with the exterior bold line. Following Corollary \ref{not_appropriate}, 
		these sides must be internal. Therefore, we have to add triangles number 1 and 2.

	At this point, the polygon contains another 4 triangles as those we began with (at the bottom). Retracing the same steps, we will get 
	infinitely many triangles at $Q$. Since we are interested in a finite cover of $M_P$, this cannot yield an appropriate cover. Therefore, 
	the left option in Figure \ref{four_c} is not contained in $Q$.

	\textbf{Now, we will show that the right option in Figure \ref{four_c}, cannot be contained in $Q$ as well}. Again, we will begin constructing $Q$ 
	with these four triangles. The following steps describe the construction of $Q$ (see Figure \ref{building_Q2}).
	\begin{figure}[h!]
	\begin{center}
	\includegraphics[scale=0.55]{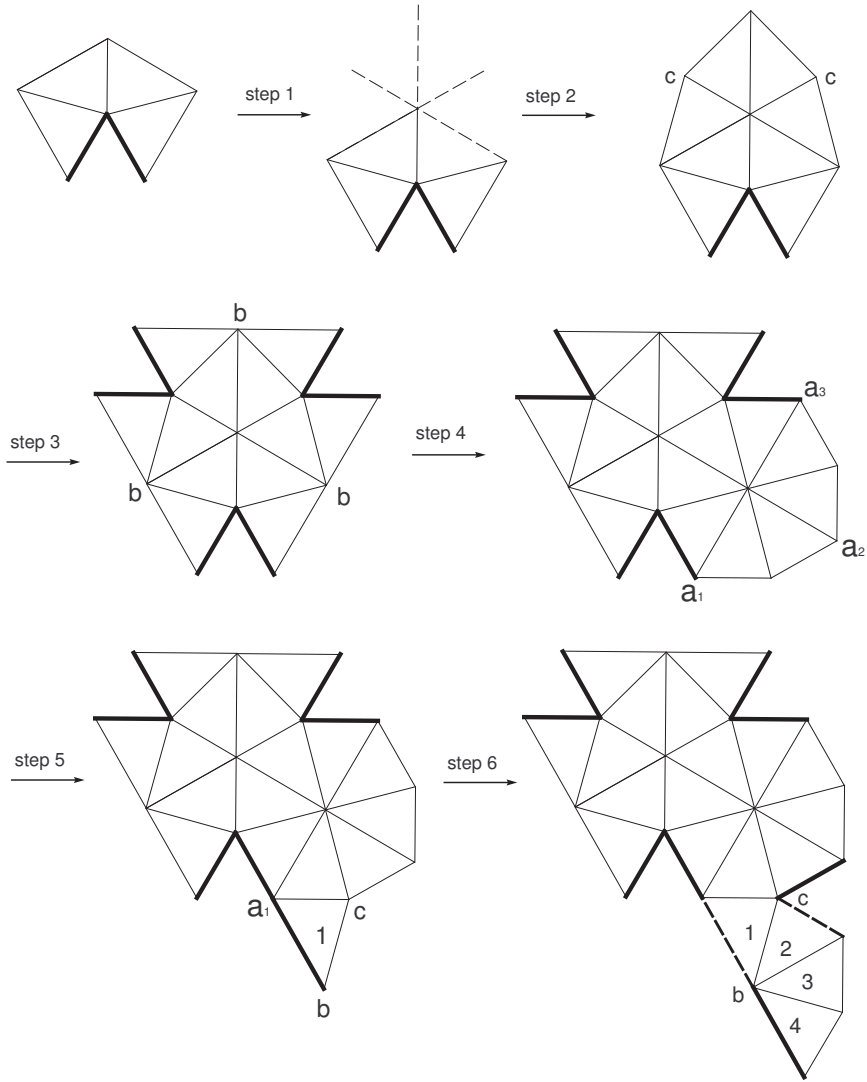}
	\caption{Steps of the construction of $Q$.}
	\label{building_Q2}
	\end{center}
	\end{figure}

	\textbf{Steps 1 and 2}: All the broken lines form an even angle with one of the exterior bold lines. Therefore, by Corollary 
		\ref{not_appropriate}, these sides must be internal.\quad
	\textbf{Step 3}: Adding triangles in order to multiply the angles of vertices $c$ by 4. This is the only way to do that, since 
		we have shown that the left option cannot be contained in $Q$.\quad 
		At that point, according to Propositions \ref{branching} and \ref{commensurable}, this polygon determines a lattice surface, 
		hence we need to enlarge $Q$. \quad	
	\textbf{Step 4}: Without loss of generality, we added triangles at one of the vertices $b$. As we required before, at each 
		vertex $b$ there are 4 or 8 triangles glued together. Therefore we added 4 triangles. \quad
	\textbf{Step 5}: Consequently, we have 3 vertices of type $a$, in each we have an angle $\frac{2\pi}{3}$. The corresponding points 
		to these vertices, are different points in $M_P$. According to Proposition \ref{branching}, since we are interested in covers with a 
		single branch point, we have to correct at least two of the angles of these vertices to $\pi$. Without loss of generality, we start with 
		vertex $a_1$ by adding triangle number 1. \quad
	\textbf{Step 6}: Canceling the angle of vertex $b$ by adding triangles number 2, 3 and 4.\quad
	In the last step of the construction, we got 2 broken bold lines that form an even angle of $\frac{\pi}{6}$ between them. Hence, 
	according to Corollary \ref{not_appropriate} it will not yield an appropriate cover.

	Up to this point we have shown that there is no appropriate cover of the first class (as mentioned in page 12). Now, we will examine the 
	possibility of finding an appropriate cover of the second class. According to Proposition \ref{branching}, if the cover 
	$\pi:M_{\overline{P}} \to M_Q$ is branched over a point corresponding to a vertex $b$ in $P$, then $\overline{P}$ has
	one of the following angles: $\frac{3\pi}{4}, \frac{5\pi}{4}, \frac{6\pi}{4}=\frac{3\pi}{2}, \frac{7\pi}{4}$.
	For each of these possibilities, any polygon $Q$ which is tiled by reflections of $\overline{P}$, must have an even angle. 
	Following Corollary \ref{not_appropriate}, it will not yield an appropriate cover.
	Therefore, by Lemma \ref{non-periodic_points}, it remains to check the possibility for an appropriate cover, when the cover 
	$\pi:M_{\overline{P}} \to M_Q$ is branched over the singular point. According to Proposition \ref{branching}, such a branching will 
	occur if and only if there will be an angle $k\cdot \frac{5\pi}{12}$ with $k\nmid 12$. Since for any such $k$, 
	$k \cdot \frac{5\pi}{12} > 2\pi$, there is no such a cover.

	\item [b)] \textbf{\boldmath{$P$} is the acute scalene triangle 
	with angles $\left(\frac{\pi}{5}, \frac{\pi}{3}, \frac{7\pi}{15}\right)$.}\\
	Denote the vertices of $P$ by $a$, $b$ and $c$ as in the proof of Lemma \ref{non-periodic_points}. 
	$M_P$ is a surface of genus 4 with one singular point corresponding to vertex $b$ (see Figure \ref{surfaces_lemma}, surface 4). 
	Therefore, if there exists an appropriate cover, the branching must be over a regular point corresponding to a vertex $a$ or $c$. 
	In these cases, Lemma \ref{fp_of_G_Q} implies that $G_Q$ must be isomorphic to $D_3$ or $D_5$ respectively (the dihedral subgroups 
	of $G_P$ that fix the corresponding points in $M_P$). The first option cannot occur since $G_Q\cong D_3$ implies that all the 
	denominators of the angles in $Q$ are 3, and $5 \cdot \frac{7\pi}{15} > 2\pi$. Therefore we should examine only the second option: 
	The branch point corresponding to a vertex $c$. In that case $G_Q \cong D_5$. Therefore, all the vertices in $Q$ must be of the form 
	$\frac{k\pi}{5}$. In particular, this requirement implies that every angle of a vertex $b$ must be multiplied by 3, and every angle of 
	a vertex $a$ must be cancelled, i.e. multiplied by 3 or 6. ($\star$)

	We will start constructing $Q$ with 3 triangles glued together in vertex $b$. The following steps describe the construction of $Q$
	(see Figure \ref{kesm15_building}). The bold lines indicate exterior sides for the polygon. These bold lines are added when 
	we cannot expand the angle between them.
	\begin{figure}[h!]
	\begin{center}
	\includegraphics[scale=0.58]{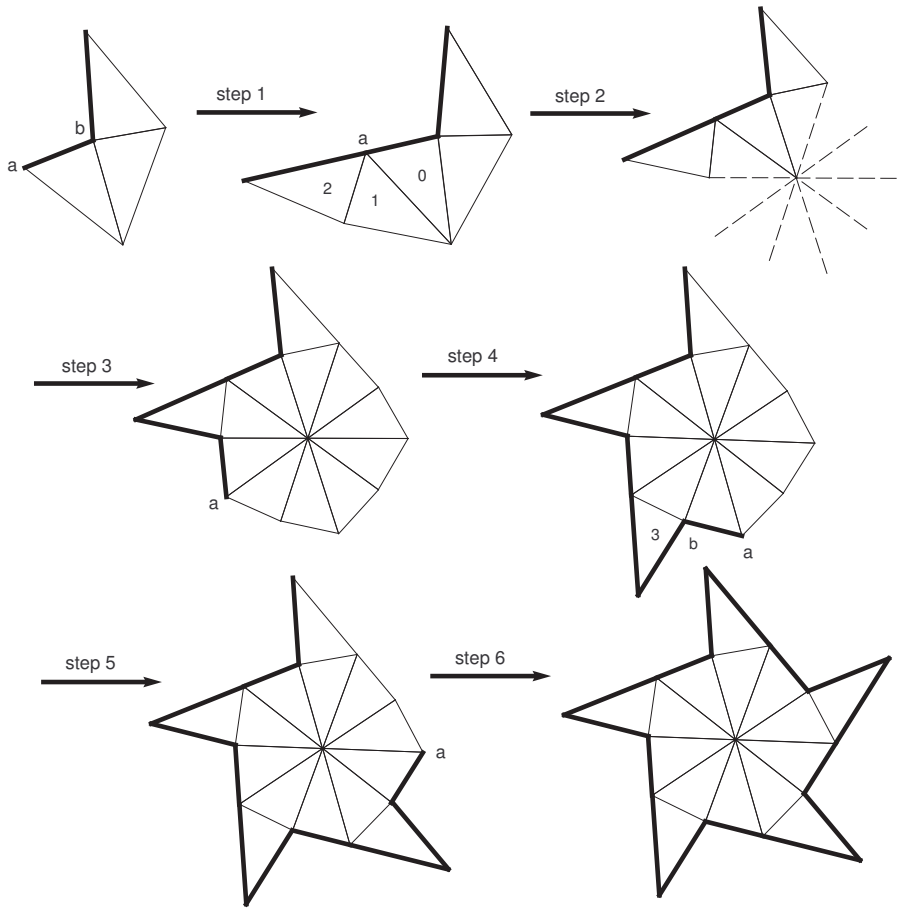}
	\caption{Steps of the construction of $Q$}
	\label{kesm15_building}
	\end{center}
	\end{figure}

	\textbf{Step 1:} Cancelling the angle of vertex $a$ by adding triangles 1 and 2. We cannot enlarge the angle of vertex $a$
		to $2\pi$ because of the existence of an exterior side in triangle $0$. Hence, we have an exterior side in triangle number 2.\quad
	\textbf{Steps 2 and 3:} The broken lines form an angle of $\frac{k\pi}{15}$, $k\nmid 15$ with one of the bold lines.
		Therefore, these sides must be internal, otherwise, $G_Q$ (the group generated by reflections of the sides of $Q$) will be the same 
		as $G_P$ ($D_{15}$) and not as we required ($D_5$). Moreover, the side touching vertex a is external according to $\star$. \quad
	\textbf{Step 4:} Fixing the angle $\frac{2\pi}{3}$ of vertex $a$ to be $\pi$ by adding triangle number 3. The new bold 
		lines are added since we cannot enlarge the angle of vertex $b$ according to ($\star$).\quad
	\textbf{Steps 5+6:} Repeating step 4 - adding triangles to fix the angles of the vertices $a$ to be $\pi$.

	At that point, the construction of $Q$ is complete, since all of its sides are exterior. According to Propositions \ref{branching} 
	and \ref{commensurable}, $Q$ determines a lattice surface. Therefore, it is not an appropriate cover of $M_P$.

	Up to this point we have shown that there is no appropriate cover of the first class (as mentioned in page 12). 
	Now, we will examine the possibility of finding an appropriate cover of the second class. 

	According to Lemma \ref{non-periodic_points}, the points corresponding to vertices $a$ and $c$ are non-periodic.
	Since the branch points of the cover can arise only from the vertices of $P$, the only periodic point in $M_P$, which $M_{\overline{P}}$ 
	can be branched over, is the singular point corresponding to vertex $c$ in $P$. Following Proposition \ref{branching}, the cover 
	$\pi:M_{\overline{P}} \to M_Q$ will branch over the singular point of $M_P$ if and only if $\overline{P}$ has an angle 
	$k\cdot \frac{7\pi}{15}<2\pi$ with $k\nmid 15$. Therefore, for such an appropriate cover, $\overline{P}$ must have an angle 
	$2\cdot \frac{7\pi}{15}$ or $4\cdot \frac{7\pi}{15}$. Since $4\cdot \frac{7\pi}{15} > \pi$, any polygon $Q$ tiled by $\overline{P}$
	will have this angle as well. In that case, $N_Q=15$, and $G_Q$ will not be isomorphic to $D_5$ as required. Hence, if there is an 
	appropriate cover of the second class, $\overline{P}$ must have the angle $2\cdot \frac{7\pi}{15}$. But, in that case, any polygon 
	$Q$ tiled by $\overline{P}$, will have an angle of the form $k\cdot 2\cdot \frac{7\pi}{15} < 2\pi$, i.e. $Q$ will have an angle
	$2\cdot \frac{7\pi}{15}$ or $4\cdot \frac{7\pi}{15}$. As in the previous case, $N_Q=15$ so it will not yield an appropriate cover.

	\item [c)] \textbf{\boldmath{$P$} is the acute scalene triangle with angles
	$\left(\frac{2\pi}{9}, \frac{\pi}{3}, \frac{4\pi}{9}\right)$.}\\
	Denote the vertices of $P$ by $a$, $b$ and $c$ as in the proof of Lemma \ref{non-periodic_points}. $M_P$ is a surface of 
	genus 3 with 2 singular points corresponding to vertices $a$ and $c$ (see Figure \ref{surfaces_lemma}, surface 3). Therefore, if there 
	is an appropriate cover it must be branched 	over a regular point corresponding to a vertex $b$. By Lemma \ref{fp_of_G_Q}, in that case, 
	$G_Q$ must be isomorphic to $D_3$ (the dihedral subgroup of $G_P$ which fixes the points in $M_P$ corresponding to vertex $b$). 
	This implies that all the vertices in $Q$ must be of the form $\frac{k\pi}{3}$.
	In particular, every angle of a vertex $c$ in $Q$ must be multiplied by 3. For a vertex $a$, we can achieve this form if we multiplied 
	the angle by 3 or 6. According to Proposition \ref{branching}, multiplying by 6 will lead to a branching over a singular point (periodic).
	Hence, every angle of vertex $a$ must be multiplied by 3 in $Q$. ($\star$)

	We will start constructing $Q$ with 3 triangles glued together in vertex $c$. As before, the 2 bold lines indicate external sides 
	(occur when we cannot enlarge the angle at the vertex).The following steps describe the construction of $Q$ (see Figure 
	\ref{kesm9_building}).
	\begin{figure}[h!]
	\begin{center}\qquad
	\includegraphics[scale=0.7]{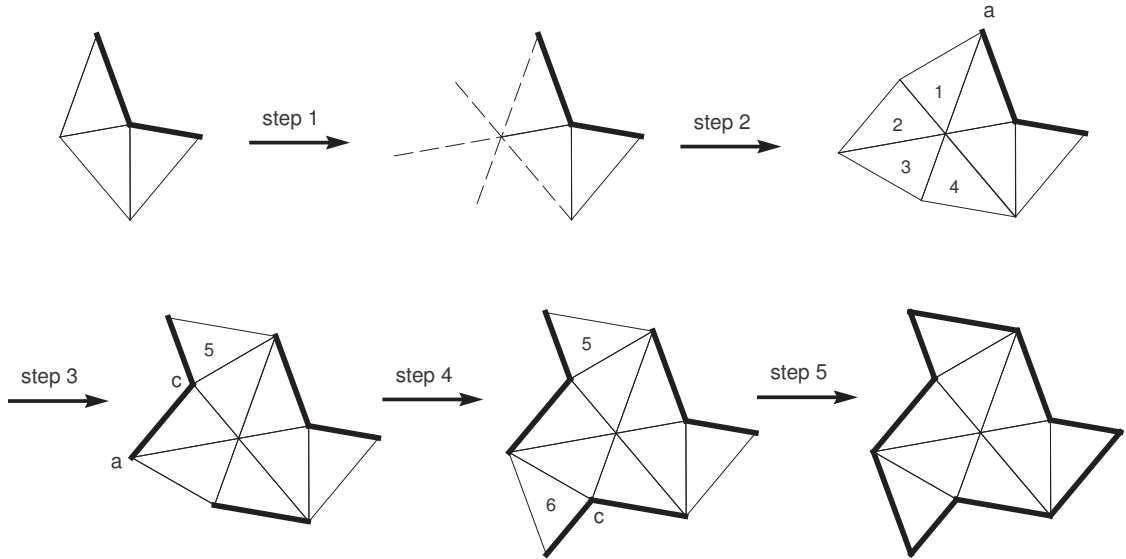}
	\caption{Steps in the construction of Q}
	\label{kesm9_building}
	\end{center}
	\end{figure}

	\textbf{Steps 1 and 2:} The broken lines form an angle of $\frac{k\pi}{9}, k\nmid 9$ with one of the bold lines. 
		Therefore, these sides must be internal. Otherwise, $G_Q$ (the group generated by reflections of the sides of $Q$)
		will be isomorphic to $D_9$, and not isomorphic to $D_3$ as required. Hence, we added triangles number 1, 2, 3, and 4. \quad
	\textbf{Step 3:} According to ($\star$), we added triangle number 5 in order to expand the angle of vertex $a$ to $3\cdot\frac{2\pi}{9}$.
		The new bold lines were added since we could not enlarge the angle  of vertex $c$ (following ($\star$)). \quad
	\textbf{Step 4:} Repeating step 3 - adding triangle number 6 in order to fix the angle of vertex $a$. \quad
	\textbf{Step 5:} In each vertex $a$ we have an angle of $3\cdot\frac{2\pi}{9}$ which we cannot expand according to ($\star$). 
		Therefore, we add the bold lines.

	At that point we have a polygon $Q$ which we cannot enlarge, since all its sides are exterior. According to Propositions 
	\ref{branching} and \ref{commensurable}, $Q$ determines a lattice surface. Therefore, it will not yield an appropriate cover of $M_P$.

	Up to this point we have shown that there is no appropriate cover of the first class (as mentioned in page 12). Now, we 
	will examine the possibility of finding an appropriate cover of the second class.

	According to Lemma \ref{non-periodic_points}, the points corresponding to vertex $b$ are non-periodic. Since the branch points of the cover 
	can arise only from the vertices in $P$, the periodic points in $M_P$, that $M_{\overline{P}}$ can be branched over, are the singular points 
	corresponding to vertices $a$ and $c$.

	($\clubsuit$) Notice that $M_{\overline{P}}$ cannot be branched over the singular point corresponding to vertex $c$. If so, following 
	Proposition \ref{branching}, $\overline{P}$ will have one of the angles $2 \cdot \frac{4\pi}{9}$ or $4 \cdot \frac{4\pi}{9}$.
	In both cases, for any polygon $Q$ tiled by $\overline{P}$, $N_Q=9$ as opposed to the requirement that $N_Q=3$. In particular, according to
	Proposition \ref{branching}, all the angles of the vertices $c$ in $\overline{P}$ must be $\frac{4\pi}{9}$ or $3\cdot \frac{4\pi}{9}$.
	Also, according to Proposition \ref{branching}, since we want $M_{\overline{P}}$ to branch over periodic points in $M_P$, all the 
	vertices in $\overline{P}$, corresponding to the angle $\frac{\pi}{3}$, have to appear with angles $\frac{\pi}{3}$, $\pi$ or $2\pi$.
	
	We will try to construct a suitable $\overline{P}$ for such an appropriate cover. Since we are interested in a polygon $Q$, tiled by 
	$\overline{P}$, such that $M_Q$ is branched over a single non-periodic point, we must have a vertex with angle $\frac{\pi}{3}$ in 
	$\overline{P}$. Hence, we will start the construction of $\overline{P}$ with triangle number 1 with two external sides (as described in 
	Figure \ref{kesm9_overlineP}). The following steps were taken:

	\textbf{Step 1:} The only way to continue is by adding triangle number 2. \quad
	\textbf{Step 2:} According to ($\clubsuit$), each vertex $c$ can appear in $\overline{P}$ with angle $\frac{4\pi}{9}$ or 
		$3\cdot\frac{4\pi}{9}$. Therefore, we add triangle number 3 with an external side. \quad

	For the next step, we will explain why the side of triangle number 2 must be external in $\overline{P}$.
	We have started the construction of $\overline{P}$ with an angle$\frac{\pi}{3}$ in order to multiply this angle to be 
	$\frac{k\pi}{3}$ with $k>1$ and $(k,3)=1$ in $Q$. That is, for having the wanted branching of the cover 
	$\pi:M_Q \to M_{\overline{P}}$. Hence, to find a suitable polygon $Q$, we have to reflect with respect to at least one of the sides 
	of that angle. We cannot reflect with respect to side B (since the angle at vertex $c$ greater than $\pi$). Therefore,
	we must reflect with respect to side $A$.	Recall that all of the angles  in $Q$ must have the form $\frac{k\pi}{3}$. Reflection with 
	respect to the side $A$, multiplies the angle at vertex $a$. If that angle is $k\cdot\frac{2\pi}{9}$ with $k\geq 3$, than according to 
	Proposition \ref{branching}, such a reflection will cause a branching over a singular point in $M_P$. Therefore, we cannot allow 
	expansion of the angle of vertex $a$.

	\textbf{Step 3:} Adding the bold line for an external side in triangle number 2. \quad
	\textbf{Step 4:} Fixing the angle $\frac{2\pi}{3}$ of vertex $b$ to $\pi$ by adding triangle number 4 (following ($\clubsuit$). \quad
	\begin{figure}[h!]
	\begin{center}\qquad
	\includegraphics[scale=0.57]{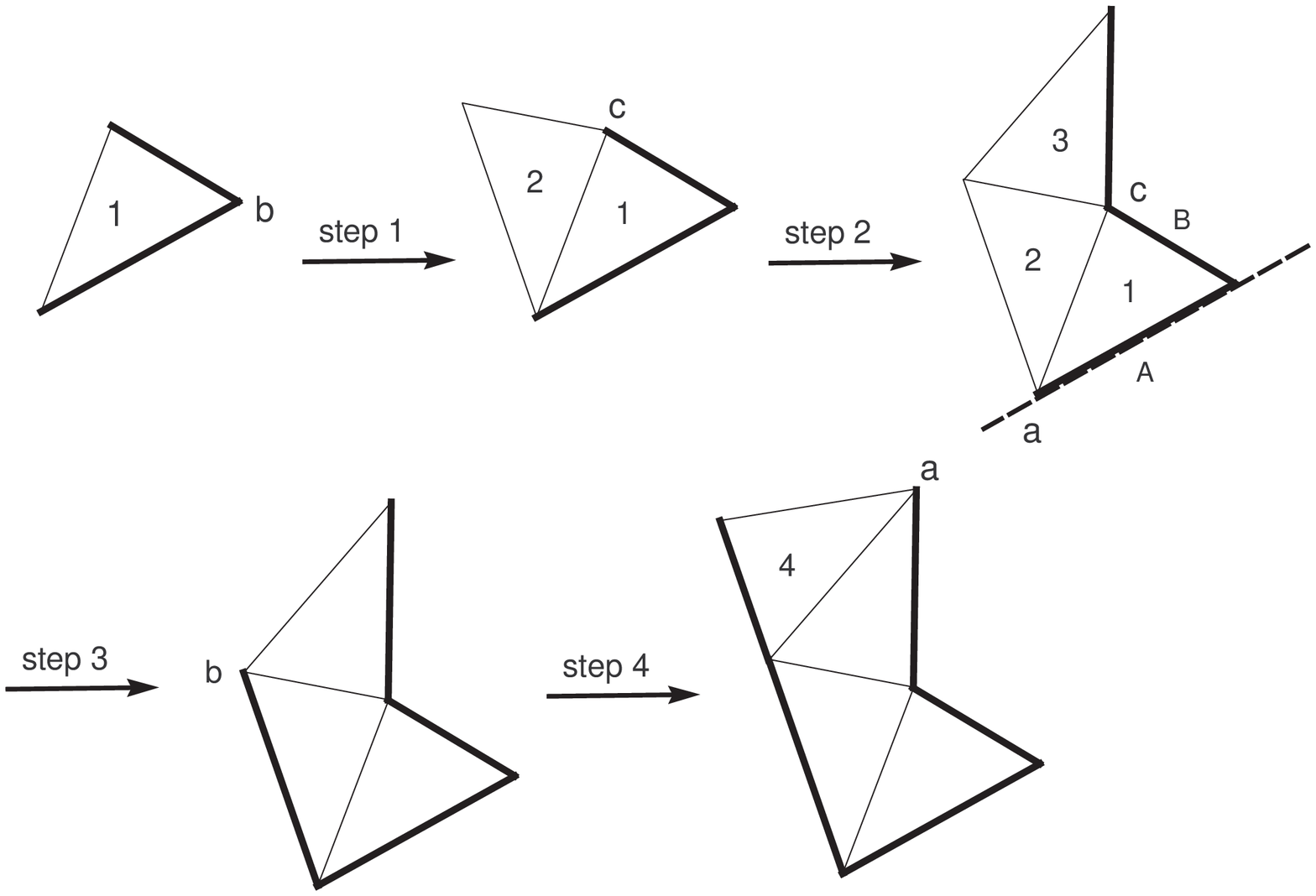}
	\caption{Steps in construction of $\overline{P}$}
	\label{kesm9_overlineP}
	\end{center}
	\end{figure}

	According to Propositions \ref{branching} and \ref{commensurable}), the polygon in the last step has the lattice property. 
	We will show that on one hand, this polygon does not yield an appropriate cover, and on the other hand, we cannot enlarge 
	$\overline{P}$ to get an appropriate cover.

	In the last step, the angle of vertex $a$ is $2\cdot\frac{2\pi}{9}$. Since all the angles in $Q$ must be of the form $\frac{k\pi}{3}$, 
	we need to reflect at the sides of this angle twice. Since the angle of vertex $c$ is greater than $\pi$, we can reflect 
	just once. Therefore, this polygon $\overline{P}$ will not yield an appropriate cover.

	On the other hand, as described in Figure \ref{kesm9_contP}, we cannot enlarge $\overline{P}$. If we add triangle number 5, we must add 
	triangle number 6 as well (according to ($\clubsuit$)). In that case, as we explained before, we cannot reflect in the sides of vertex 
	$a$ twice. Hence, it will not yield an appropriate cover. \begin{figure}[h!]
	\begin{center}\qquad
	\includegraphics[scale=0.61]{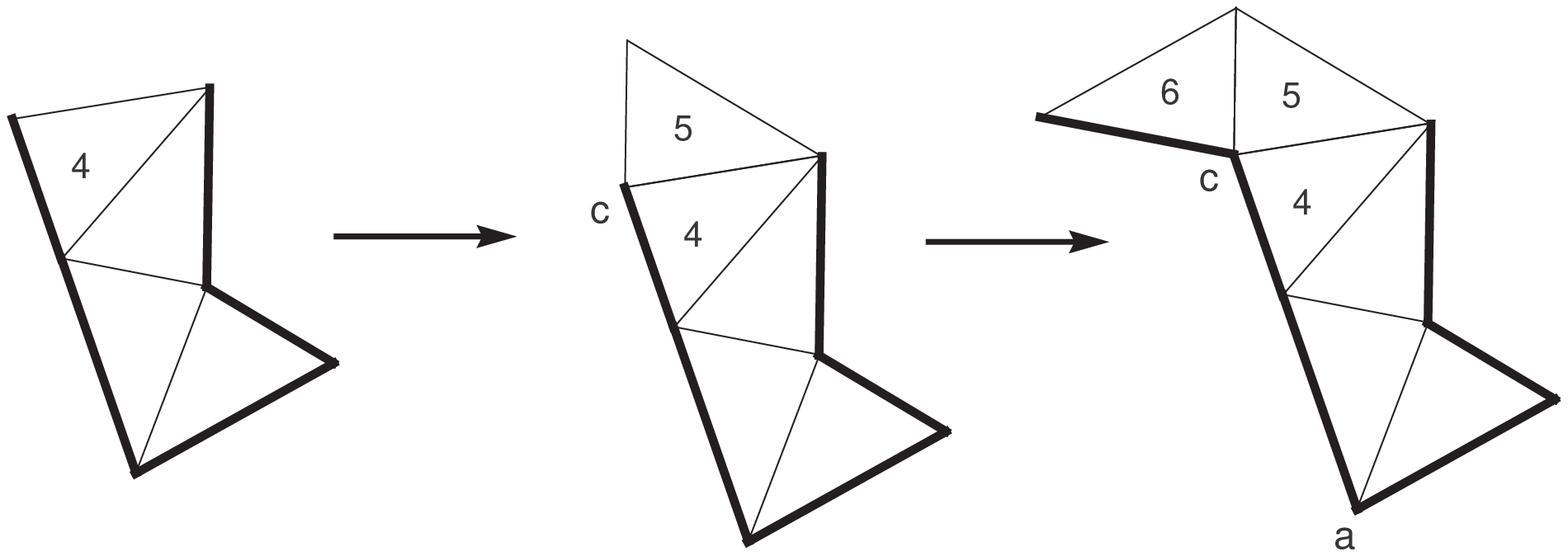}
	\caption{Continuing the construction of $\overline{P}$}
	\label{kesm9_contP}
	\end{center}
	\end{figure}

\end{itemize}

\item \textbf{\boldmath{$P$} is an obtuse triangle with angles
	$\left(\frac{\pi}{2n}, \frac{\pi}{n}, \frac{(2n-3)\pi}{2n}\right)$, $n\geq 4$.}
	\begin{itemize}
	\item [a)] \textbf {If \boldmath{$n$} is even:}
		All the points in $M_P$ (see Figure \ref{ward6}) corresponding to vertices in $P$ are fixed points of the rotation by $\pi$. 
		Therefore by Lemma \ref{involution} these are periodic points. Since we want a single non-periodic branch point, there is no 
		appropriate cover.
		\begin{figure}[h!]
		\begin{center}
		\includegraphics[scale=0.37]{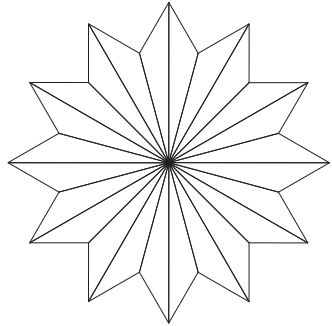}
		\caption{Ward's star for n=6}
		\label{ward6}
		\end{center}
		\end{figure}

	\item [b)] \textbf {If \boldmath{$n$} is odd:}
		Following Remark \ref{observation}, $N_P=2n \Longrightarrow N_Q \mid 2n$. By Lemma \ref{not_appropriate}, if there is an appropriate 
		cover, $N_Q$ must be odd. Hence, since $3\cdot\frac{(2n-3)\pi}{2n} > 2\pi$, all the angles in $Q$ which correspond to the obtuse angle in 
		$P$, must be $2\cdot\frac{(2n-3)\pi}{2n}=\frac{(2n-3)\pi}{n}$. Consequently, $N_Q=n$, and $G_Q\cong D_n$, and each angle
		$2\cdot\frac{(2n-3)\pi}{2n}$ in $Q$ must be delimited by 2 external sides (since we cannot expand it). ($\clubsuit$) 

		According to Lemma \ref{involution}, the center of $M_P$ (see Figure \ref{surfaces_lemma}, surface 5) is a periodic point, 
		since it is a fixed point of the rotation by $\pi$. According to Lemma \ref{non-periodic_points}, the points $c_1$ and 
		$c_2$ (marked in Figure \ref{surfaces_lemma}) are non-periodic points in $M_P$. Therefore, if there is an appropriate cover, 
		it must be branched over one of the points $c_1$ or $c_2$.
		
		($\star$) Notice that, considering Remark \ref{remark_fp_of_G_Q}, since $c_1$ and $c_2$ are not fixed points of the reflections
		with respect to the broken lines in Figure \ref{surfaces_lemma}, these reflections should not be in $G_Q$. Therefore, these sides 
		must be internal in $Q$.

		In the beginning we will examine the first class of appropriate covers (as mentioned in page 12), i.e. a branch cover 
		\text{$\pi:M_Q \to M_P$}, where the branch locus is a single non-periodic point in $M_P$ ($c_1$ or $c_2$).
		
		According to ($\clubsuit$), we will start the construction of $Q$ with an angle $2\cdot\frac{(2n-3)\pi}{2n}$ with two external 
		sides. There are two options for such a beginning, as described in Figure \ref{ward_2_options}.
		\begin{figure}[h!]
		\begin{center}
		\includegraphics[scale=0.65]{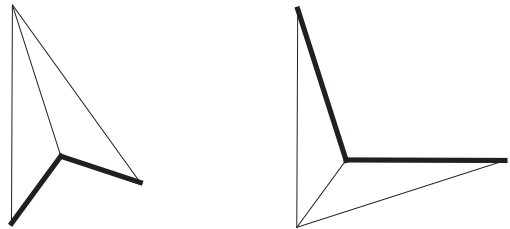}
		\caption{Two options to start the construction of $Q$}
		\label{ward_2_options}			
		\end{center}
		\end{figure}

		According to ($\star$), the right option cannot be contained in $Q$. Therefore, we will start the construction with the left option, 
		under our requirements, step by step as described in Figure \ref{ward5_stages_Q}.

		\textbf{Step 1:} According to Propositions \ref{branching} and \ref{commensurable}, the polygon we started with has the lattice 
			property. Therefore, we have to enlarge it. Without loss of generality we add triangle number 1. \quad
		\textbf{Step 2:} Following ($\star$), the broken line indicates a side which must be internal in $Q$. Therefore, we add triangle 
			number 2. \quad
		At that point, according to Proposition \ref{branching}, the angles of vertices $a$ and $c_1$ will lead to a branching over
		the points corresponding to the center of $M_P$ and $u$. Since we are interested in branching over a single non-periodic point, 
		we must continue the construction. Without loss of generality we add triangle number 3. Again, following ($\star$), the broken 
		line indicates a side which must be internal in $Q$. Therefore, we add triangle number 4.
		Now, following Proposition \ref{branching}, we have a branching over two different points, $c_1$ and $c_2$ in $M_P$.
		Since both vertices $u$ and $v$ are delimited by 2 external sides, we cannot fix the angle to prevent the branching over one of these
		points. Hence, we cannot find an appropriate cover of the first class.
		\begin{figure}[h!]
		\begin{center}
		\includegraphics[scale=0.7]{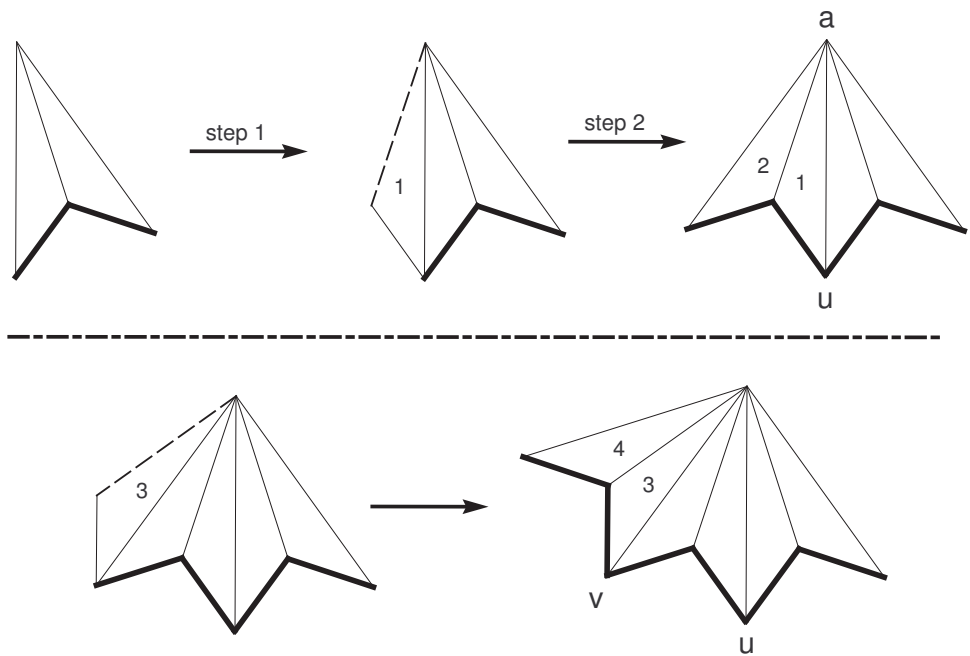}
		\caption{Steps of the construction of $Q$}
		\label{ward5_stages_Q}			
		\end{center}
		\end{figure}
		
		It remains to examine the second class of appropriate covers. We need to construct a polygon $\overline{P}$, such that 
		$M_{\overline{P}}$ is branched only over periodic points in $M_P$, i.e. the center of $M_P$ or the singular point. 
		According to Proposition \ref{branching} the cover will be branched over the singular point if and only if $\overline{P}$ will have 
		an angle $k\cdot\frac{2n-3}{2n}\pi$ with $k\nmid{2n}$. Since $3\cdot\frac{2n-3}{2n}\pi > 2\pi$, there will not be a branching
		over the singular point. Therefore, for such a polygon $\overline{P}$, $M_{\overline{P}}$ must be branched over the center of $M_P$, 
		corresponding to the angle $\frac{\pi}{2n}$ in $P$. Hence, according to Proposition \ref{branching}, $\overline{P}$ must have an angle
		$k\cdot\frac{\pi}{2n}$ with $k\nmid 2n$. Such an angle cannot appear in $\overline{P}$ without a branching over a non-periodic point, 
		as described in Figure \ref{ward5_overlineP}:
		If $k\geq 3$, then $\overline{P}$ contains the left polygon in Figure \ref{ward5_overlineP} (the sides are marked as external in 
		the figure because of formula $\clubsuit$). According to Proposition \ref{branching}, since the angle of vertex $u$ is 
		$2\cdot\frac{\pi}{n}$ with odd $n$, there will be a branching over a non-periodic point. Hence, we have to fix that angle. Since one 
		of the sides of this angle is external, we can fix it only by adding triangle number 1. In that case, we have 2 external sides with 
		an angle greater than $\pi$. Hence, each polygon $Q$ tiled by $\overline{P}$ will have these external sides as well, which 
		contradicts ($\star$). Therefore, the other side of the angle of vertex $u$ must be external as well. In that case, the cover 
		$\pi:M_{\overline{P}} \to M_Q$ is branched over a non-periodic point. Consequently, there is not appropriate cover of the second 
		class neither.
		\begin{figure}[h!]
		\begin{center}
		\includegraphics[scale=0.6]{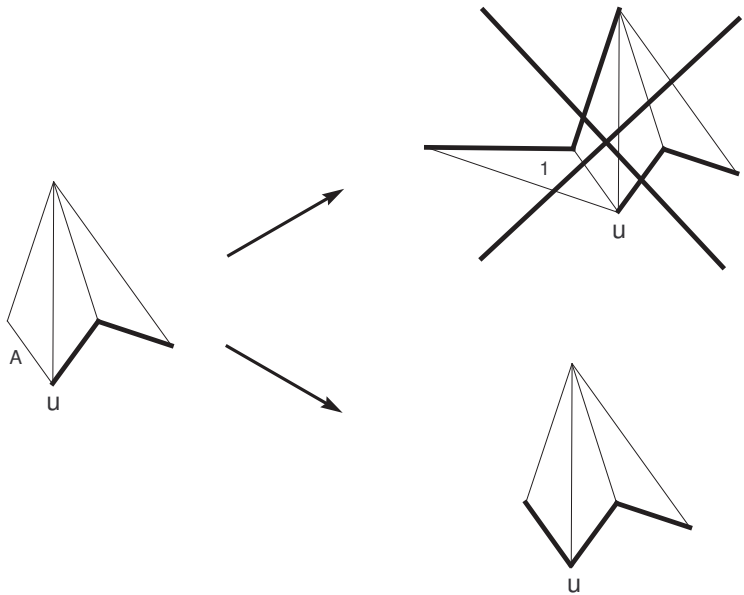}
		\caption{Steps of building $Q$}
		\label{ward5_overlineP}			
		\end{center}
		\end{figure}

	\end{itemize}

\item \textbf{\boldmath{$P$} is the obtuse triangle with angles 
	$\left(\frac{\pi}{12}, \frac{\pi}{3}, \frac{7\pi}{12}\right)$.}\\
	Here $N_P=12$. Remark \ref{observation} implies that 
	$N_Q \in \{2,3,4,6,12\}$. According to Corollary \ref{not_appropriate}, if there is an appropriate cover $\pi:M_Q \to M_P$, then $N_Q$ must 
	be odd. Consequently, $N_Q$ must be 3. That means that all the angles of vertices in $Q$ are of the form $\frac{k}{3}\pi$. 
	Hence, the angle $\frac{7}{12}\pi$ in $P$ has to be multiplied by $4n$.	Since $4\!\cdot\! \frac{7}{12}\pi > 2\pi$, there is 
	no appropriate cover. 

\item \textbf{\boldmath{$P$} is a L-shaped polygon:}
	All the points in $M_P$ corresponding to vertices in $P$ are fixed points of the rotation by $\pi$. Hence, by Lemma \ref{involution}, 
	these are all periodic points. Since we looking for a cover with a single non-periodic branch point, there is no appropriate cover.

\item \textbf{Bouw and M\"{o}ller examples} (See \cite{BM} for a description):
	\begin{itemize}
		\item 4-gon with angles $\left(\frac{\pi}{n},\frac{\pi}{n},\frac{\pi}{2n},\frac{(4n-5)\pi}{2n}\right)$
				for $n\geq 7$ and odd.
		\item 4-gon with angles $\left(\frac{\pi}{2},\frac{\pi}{n},\frac{\pi}{n},\frac{(3n-4)\pi}{2n}\right)$ 
				for $n\geq 5$ and odd.
	\end{itemize}
	For each of the polygons above, $N_P=2n$. Since $\frac{(4n-5)\pi}{2n}$ and $\frac{(3n-4)\pi}{2n}$ are even
	angles greater than $\pi$, any polygon $Q$ tiled by $P$ (one of the polygons above) must have these angles as well. 
	Therefore, $N_Q$ must be even. Corollary \ref{not_appropriate} implies that there is no appropriate cover for those examples.

\item \textbf{\boldmath{$P$} is a square-tiled polygon:}\\
	Consider the following theorem by Gutkin and Judge \cite{GJ00}: \textit{A surface $M$ is tiled by parallelograms if and only if 
	\ $\Gamma(M)$ is arithmetic}. Since any arithmetic group is a lattice, this theorem proves that all the square-tiled surfaces 
	are lattice surfaces. If $M$ is a square-tiled surface, then every surface $\widetilde{M}$, 
	which covers $M$, is also a square-tiled surface, hence a lattice surface. Therefore, we cannot have an appropriate cover in this case.
\end{enumerate}

\newpage

\end{document}